\documentclass[pdflatex]{sn-jnl}% Math and Physical Sciences Numbered Reference Style  

\usepackage{amsmath,amssymb,amsfonts,amsthm}
\usepackage{graphicx}%
\usepackage{multirow}%
\usepackage{lmodern}
\usepackage[title]{appendix}%
\usepackage{xcolor}%
\usepackage{textcomp}%
\usepackage{manyfoot}%
\usepackage{booktabs}%  
\usepackage{listings}%
\usepackage{upgreek}
\usepackage{xspace}
\usepackage{mathtools}%
\usepackage{enumerate}%
\usepackage[english]{babel}

\newcommand{\eps}{\ensuremath{\varepsilon}}



\newcommand{\dx}{\mathrm{d}}

\definecolor{carmine}{rgb}{0.7, 0.1, 0.09}

\definecolor{debianred}{rgb}{0.84, 0.04, 0.33}
\newtheoremstyle{sltheorems}% name
{10pt}%  Space above
{6pt}%  Space below
{\sl}% Body font
{}% Indent amount (empty = no indent, \parindent = para indent)
{\bfseries}% Thm head font
{.}%Punctuation after thm head
{.5em}% Space after thm head: " " = normal interword space;
{\thmname{#1}\thmnumber{~#2}\thmnote{~(#3)}}
\theoremstyle{sltheorems} 
\newtheorem{Thm}{Theorem}
\newtheorem*{Thm*}{Theorem}
\newtheorem{CThm}{Classical Theorem}

\newtheorem{hypothesis}{Hypothesis}

\newtheorem{lem}{Lemma}

\newtheorem*{cor*}{Corollary}

\newtheoremstyle{plainrem}% name
{10pt}%  Space above
{6pt}%  Space below
{}% Body font
{}% Indent amount (empty = no indent, \parindent = para indent)
{\bfseries}% Thm head font
{.}%Punctuation after thm head
{.5em}% Space after thm head: " " = normal interword space;
{\thmname{#1}\thmnumber{~#2}\thmnote{~(#3)}}
\theoremstyle{plainrem}
\newtheorem{Rem}{Remark}

\theoremstyle{definition}

\usepackage{ifthen} 
\newboolean{alefonts}
\setboolean{alefonts}{false}
\ifthenelse{\boolean{alefonts}}{
\usepackage[full]{textcomp} 
\usepackage{newtxtext} % osf for text, not math
\usepackage{cabin} % sans serif 
\usepackage{zlmtt}% latin modern typewriter
\usepackage[bigdelims,vvarbb]{newtxmath} % bb from STIX
\usepackage[cal=boondoxo]{mathalfa} % mathcal
 }

\makeatletter
\patchcmd{\env@cases}{1.2}{1}{}{}
\makeatother

\begin{document}

\title[]{Beyond the Riemann Hypothesis bounds: A pair-correlation approach to the least prime in arithmetic progression
and the smallest quadratic non-residue}

%\author{Neelam Kandhil, Alessandro Languasco, Pieter Moree}

\author*[1]{\fnm{Neelam} \sur{Kandhil}}\email{\small{neelamkandhil091@gmail.com}}
\equalcont{\small{These authors contributed equally to this work.}}
\author[2]{\fnm{Alessandro} \sur{Languasco}}\email{\small{alessandro.languasco@unipd.it}}
\equalcont{\small{These authors contributed equally to this work.}}
\author[3]{\fnm{Pieter} \sur{Moree}}\email{\small{moree@mpim-bonn.mpg.de}}
\equalcont{\small{These authors contributed equally to this work.}}

\affil[1,3]{\small{\orgdiv{Max-Planck-Institut f\"ur Mathematik}, 
\orgaddress{\street{Vivatsgasse 7}, \city{Bonn}, \postcode{D-53111}, \country{Germany}}}}

\affil[2]{\small{\orgdiv{Dipartimento di Ingegneria dell'Informazione - DEI}, \orgname{Universit\`a di Padova},
\orgaddress{\street{Via Gradenigo 6/b}, \city{Padova}, \postcode{35131}, \country{Italy}}}}

\abstract{
The Generalized Riemann Hypothesis (GRH) has long defined the expected bounds for the 
smallest prime in an arithmetic progression and the least quadratic non-residue. However, this hypothesis primarily addresses the horizontal location of non-trivial zeros. In this paper, we show that incorporating the vertical spacing--or pair-correlation--of these zeros allows us to surpass these classical bounds. 
By combining these two zero-distribution perspectives, we establish
sharper estimates for both problems under GRH and specific pair-correlation
hypotheses, thereby providing a new link between pair-correlation 
phenomena  for Dirichlet 
$L$-functions and these two classical problems.}

\keywords{Primes in arithmetic progressions, least quadratic non-residues, Dirichlet \emph{L}-functions, Generalized Riemann Hypothesis, pair-correlation conjectures.}

\pacs[MSC Classification]{Primary 11M26; Secondary 11N13, 11L40, 11M50}
%11L40 Estimates on character sums
%11N13 Primes in congruence classes
%11M50 Relations with random matrices
%11M26 Nonreal zeros of ζ(s) and L(s, χ); Riemann and other hypotheses

\maketitle

\section{Introduction}
The distribution of primes and the behavior of arithmetic characters are central themes in analytic number theory. Two of the most celebrated problems in these 
areas concern the size of the smallest prime in an arithmetic progression and 
the least quadratic non-residue. Historically, these problems are often associated with the names of Linnik and Vinogradov and the progress on these problems has been dictated by our
understanding of the distribution of the non-trivial zeros of Dirichlet $L$-functions. 

While the pole of Riemann zeta function at $s=1$ determines the ``main term'' of our counting functions, the location of zeros of Dirichlet $L$-functions governs the size of the ``error terms'' that ultimately define these bounds (see Lemma \ref{davenlem} 
in \S~\ref{pre}). 
Under the assumption of the Generalized Riemann Hypothesis (GRH), we obtain a rigid horizontal control that places all non-trivial zeros on the critical line.
In the classical approach, while we assume the zeros lie on the critical line, we have very little information about how they are spaced vertically. 
Recent developments in the spectral theory of these zeros, most notably the Montgomery pair-correlation conjecture (\cite{Montgomery1973}) for the Riemann zeta-function, suggest that these zeros exhibit a form of repulsion that leads to a much more regular and even   vertical distribution than previously assumed. This phenomenon reflects a deep 
connection with Random Matrix Theory, suggesting that the spacing of zeros of zeta functions mimics the distribution of eigenvalues of a generic large unitary matrix after appropriate scaling. In our previous work \cite{mathannalen}, we have shown 
that a suitable analogue of the Montgomery pair-correlation conjecture formulated for the non-trivial zeros of Dirichlet $L$-functions
has very strong implications for the distribution of primes in arithmetic progressions.
In this paper,  we further exploit this refined conjectural
information and show that it leads to better results on
these two celebrated problems of Linnik and Vinogradov.
 
For an accessible introduction to the  classical pair-correlation conjecture, 
see, e.g., Goldston \cite{PCintro}.

\subsection{Estimates on least quadratic non-residue}
For a given prime $q$, let $n(q)$ denote the least natural number that is not a quadratic
residue modulo $q$.  
The first breakthrough in obtaining a good upper bound for  $n(q)$ is
due to Vinogradov \cite{Vino1985}, 
who proved that
\(
n(q) \ll q^{\frac{1}{2\sqrt{e}}} (\log q)^2
\)
for all primes $q$, and \emph{conjectured} that
\begin{equation}\label{vino}
  n(q) \ll q^{\eps},
\end{equation}
for any fixed $\eps>0$.
This conjecture remains open unconditionally; the best
available bound (up to refinements of the $q^{\eps}$ factor) 
for general primes $q$ is
\(
n(q) \ll q^{\frac{1}{4\sqrt{e}}+ \eps},
\)
for any fixed $\eps > 0$. This is a well-known result of Burgess \cite{Burgess1957}.
  Linnik \cite{Linnik1942} demonstrated unconditionally that
the set of primes for which Vinogradov’s conjecture fails has density zero.

Next we consider conditional results.
In 1942, Linnik \cite{Linnik1942} showed that Vinogradov conjecture \eqref{vino} 
follows assuming GRH; in 1952, Ankeny \cite{ankeny} improved the bound further to
\(
n(q) \ll (\log q)^2
\)
under the same hypothesis.
Later in 2015, Lamzouri, Li and
Soundararajan \cite[Corollary 1.1]{LamzouriLS2015} refined this to
\(
n(q) \le (\log q)^2,
\)
for all $q  \ge  5$, again assuming GRH.
In the same year, Tao  \cite{Tao2015} 
proved that Vinogradov’s conjecture follows from the Elliott--Halberstam conjecture.
In 2016, assuming a conjectural improvement to bounds for long character sums, Bober and Goldmakher \cite{Bobgold} showed that 
$
n(q) \ll (\log q)^{1.4}
$
for primes $q \equiv 3 \pmod{4}$ and provided heuristic arguments suggesting that the exponent 1.4 should be replaceable by $1+\eps$.

Regarding lower bounds, assuming GRH, in 1971
Montgomery \cite[Theorem 13.15]{Montbook}  established that
\begin{equation*}
%\label{mont-nq}
n(q) = \Omega\bigl( (\log q) \log \log q\bigr).
\end{equation*}
In 1990, Graham and Ringrose \cite{GrahamR1990} \emph{unconditionally} proved that
\(
n(q) = \Omega\bigl( (\log q) \) \(\log \log \log q\bigr).
\) 
For a comprehensive overview of these developments, see \cite{MC2019}.

In this article we are interested in the implication of various pair-correlation hypotheses
for prime number theory. In particular, we show that a suitable pair-correlation 
hypothesis for the zeros of the quadratic Dirichlet $L$-function leads to a conditional improvement on the best known
upper bound for $n(q)$. This hypothesis is the direct analogue of Montgomery's pair-correlation conjecture for 
$\zeta(s)$. In order to state it, we start by defining
the pair-correlation functions for the zeros of Dirichlet $L$-functions associated with quadratic characters.

Throughout this note, $s$ denotes a complex number, $k$ a natural number, $q\ge 3$ a prime,
$\chi_{_\square}$ the quadratic character mod $k$ and $L(s, \chi_{_\square})$  the associated Dirichlet $L$-function. 
From now on, we assume the 
Riemann Hypothesis for Dirichlet $L$-functions associated with quadratic characters.
Let $\gamma_j$ be the imaginary part of the $j$-th zero (ordered by height on the half line)
and put
$$
F^+_{\chi_{_\square}}(x,T)
 \coloneq  
\sum_{\substack{0 \le  \gamma_j \le T,\ j = 1,2 \\ L(1/2+i\gamma_j, \chi_{_\square}) = 0}} 
x^{i(\gamma_1 - \gamma_2)} W(\gamma_1 - \gamma_2),
$$
where $W(u) =  4/(4+u^2)$.
We remark that, since $\chi_{_\square}$ is real, the zeros of the associated Dirichlet $L$ function 
are symmetrically distributed with respect to the real axis; hence $F^+_{\chi_{_\square}}(x,T) = F^-_{\chi_{_\square}}(x,T) $, where $$
F^-_{\chi_{_\square}}(x,T) 
 \coloneq  
\sum_{\substack{-T \le  \gamma_j \le 0,\ j = 1,2 \\ L(1/2 + i\gamma_j, \chi_{_\square}) = 0}} 
x^{i(\gamma_1 - \gamma_2)} W(\gamma_1 - \gamma_2).
$$
Our choice is to work with $F^+_{\chi_{_\square}}(x,T)$ only. Henceforth, whenever in this section we write $\gamma$ and $\gamma_j$ in the summation 
without additional specifications, we assume, that $L(1/2 + i\gamma, \chi_{_\square}) = 0$, respectively
$L(1/2 + i\gamma_j, \chi_{_\square}) = 0$.
Furthermore,  in  all of our sums, we count zeros of Dirichlet $L$-functions with \emph{multiplicity}.

Now, using classical results about the distribution of the zeros of the 
Dirichlet $L$-functions (\cite[Ch.\,16, eqn.\,1]{Davenport}  and  \cite[Ch.\,16, Lemma]{Davenport}), 
we deduce that

\begin{equation*}
%\label{trivial}
    |F^+_{\chi_{_\square}}(x,T)| 
    \le
    \sum_{\substack{0 \le  \gamma_j \le T ,\ j = 1, 2 %\\ L(1/2+i\gamma_j, \chi_j) = 0
    }}
\frac{4}{4 + (\gamma_1 - \gamma_2)^2}
\ll T (\log (kT))^2,
\end{equation*}
uniformly in $k,x$ as $T \to \infty$.

We first prove the following result for the range $x\le T \le e^x$, inspired by a
theorem  proved in 1973 by Montgomery \cite{Montgomery1973}.

\begin{Thm}\label{pc1chi}
Under RH for the Dirichlet $L$-function associated with $\chi_{_\square}$, the quadratic %primitive 
character
modulo $k$, asymptotically in $x$ we have
\[
F^+_{\chi_{_\square}}(x,T) 
\ll T \log (kx),
\]
uniformly for $3 \le k \le \exp\bigl(x (\log x)^{-1}\bigr)$
and $x \le T \le e^{x}$.
\end{Thm}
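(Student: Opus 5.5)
We follow Montgomery's 1973 argument, adapted to $L(s,\chi_{_\square})$, with uniformity in $k$ tracked throughout. The starting point is the explicit-formula identity
\[
\sum_{\gamma} \frac{2x^{i\gamma}}{1+(t-\gamma)^2}
= -x^{-1/2}\Bigl(\sum_{n\le x}\Lambda(n)\chi_{_\square}(n)\Bigl(\frac{x}{n}\Bigr)^{-1/2+it}
+\sum_{n>x}\Lambda(n)\chi_{_\square}(n)\Bigl(\frac{x}{n}\Bigr)^{3/2+it}\Bigr)
+ x^{-1/2}\,R(x,t),
\]
where $R(x,t)$ is an error term. We obtain it by applying the residue theorem to $-\frac{L'}{L}(s,\chi_{_\square})\,x^{s}/\bigl((s-\tfrac12-it)(\tfrac32+it-s)\bigr)$, shifting the contour and using RH; there is no pole at $s=1$ because $\chi_{_\square}$ is nonprincipal. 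For nonprimitive characters we first pass to the inducing primitive character. The remaining terms have size at most $\ll \log k$ summed over $p\mid k$. The trivial zeros contribute $R(x,t)\ll x^{-1/2}\log(k(|t|+2))$, say, after multiplying by $x^{1/2}$. Write $G(t)$ for the left-hand side.

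Next we square and integrate. Expanding $|G(t)|^2$ and integrating over $0\le t\le T$, we use
\[
\int_{-\infty}^{\infty}\frac{dt}{(1+(t-\gamma_1)^2)(1+(t-\gamma_2)^2)}=\frac{\pi}{2}\,W(\gamma_1-\gamma_2)
\]
to obtain
\[
\int_0^T|G(t)|^2\,dt = 2\pi F^+_{\chi_{_\square}}(x,T)+O\bigl((\log(kT))^3\bigr).
\]
The error comes from zeros with $\gamma$ outside $[0,T]$ and from truncating the $t$-range. This step uses only the zero-count $N(T+1,\chi)-N(T,\chi)\ll\log(k(T+2))$ from the lemma cited from Davenport.

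For the right-hand side we use the Montgomery--Vaughan mean value theorem, which gives
\[
\int_0^T\Bigl|\sum a_n n^{-it}\Bigr|^2dt=\sum (T+O(n))|a_n|^2 .
\]
We apply it with $a_n=\Lambda(n)\chi_{_\square}(n)n^{1/2}$ for $n\le x$ and $a_n=\Lambda(n)\chi_{_\square}(n)x^2n^{-3/2}$ for $n>x$. Using $|\chi_{_\square}|\le 1$ and Chebyshev bounds, the first piece contributes
\[
x^{-1}\sum_{n\le x}\Lambda(n)^2 n\,(T+O(n)) \ll T\log x + x^2\log x ,
\]
and the tail contributes the same amount. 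We bound $R$ trivially: $\int_0^T x^{-1}|R|^2\,dt\ll T(\log(kT))^2/x$. The cross terms are controlled by Cauchy--Schwarz.

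Combining these estimates, for $x\le T$ we get
\[
F^+_{\chi_{_\square}}(x,T)\ll T\log x + x^2\log x/T + T(\log(kT))^2/x + (\log kT)^3 .
\]
The term $x^2\log x/T$ is harmless only when $T\ge x$ — one expects it to be $\ll x\log x\le T\log x$. The main obstacle is therefore the bookkeeping of uniformity: showing $(\log(kT))^2/x\ll\log(kx)$ in the stated range. Since $T\le e^x$ and $\log k\le x/\log x$, we have $\log(kT)\le 2x$, so $(\log kT)^2/x\le 4x$. This is too large unless it is refined, so we must be more careful here. The refinement is to bound $R$ more carefully: its $t$-dependence arises only through the Gamma factor and the $\log k$ term, giving $R\ll\log k+\log(|t|+2)$ with an extra decay factor $x^{-1}$ from the trivial zeros. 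The contribution of $R$ then becomes $\ll T(\log k+\log T)^2x^{-3}\ll T\log(kx)$. Likewise $(\log kT)^3\ll x^3$, which is $\ll T\log(kx)$ only once the truncation error is also shown to be $\ll\log(kT)^2\le x^2\log x\le T\log(kx)$, after restricting the zeros appearing in the truncation error to a window of length $O(1)$ around $0$ and $T$. With these refinements the final bound $F^+_{\chi_{_\square}}(x,T)\ll T\log(kx)$ follows.
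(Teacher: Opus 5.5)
Your architecture is the paper's: Montgomery's explicit formula at $\sigma=3/2$, squared and integrated over $[0,T]$, with the left side converted into $2\pi F^+_{\chi_{_\square}}(x,T)$ plus a truncation error via zero counting. The one genuine difference is that you bound $\int_0^T|R_1|^2$ with the Montgomery--Vaughan mean value theorem instead of the Goldston--Montgomery Lemma~6 plus counting close prime pairs. That substitution is legitimate and simpler, but it must be normalized correctly. Since $|R_1(x,t)|^2=x^{-2}\bigl|\sum_n a_n n^{-it}\bigr|^2$ with your $a_n$, it gives $TS(x)+O(x\log x)\ll T\log x$ for $T\ge x$. It does not give $T\log x+x^2\log x$, and there is no $x^2\log x/T$ term. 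The paper's sharper $TS(x)+O(\sqrt{TxS(x)})$ is only needed for the asymptotic in Theorem~\ref{lam} at $T\asymp x$.

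The gap is in the step you yourself call the main obstacle: checking that every error term is $\ll T\log(kx)$ uniformly for $\log k\le x/\log x$. The truncation error $O((\log kT)^3)$ you state is too weak. At $T=x$ with $\log k\asymp x/\log x$ it is of order $x^3/(\log x)^3$, while $T\log(kx)\asymp x^2/\log x$. Your repair rests on $x^2\log x\le T\log(kx)$, which is false at $T=x$. The remark about a ``window of length $O(1)$'' does not justify a $(\log kT)^2$ error either.

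The zero count actually gives $O\bigl((\log T)(\log kT)^2\bigr)$. For $0\le t\le T$ one has $\sum_{\gamma>T}(1+(t-\gamma)^2)^{-1}\ll\log(kT)/(T-t+1)$ and $\sum_{\gamma<0}(1+(t-\gamma)^2)^{-1}\ll\log(k\tau)/(t+1)$. Combining these with $\sum_\gamma(1+(t-\gamma)^2)^{-1}\ll\log(k\tau)$ and integrating $\dx t/(t+1)$ and $\dx t/(T-t+1)$ over $[0,T]$ produces the factor $\log T$. This bound is acceptable precisely because $\log k\le x/\log x$. At $T=x$ one needs $(\log x)\log(kx)\ll x$. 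For larger $T$ one uses that $(\log T)/T$ and $(\log T)^3/T$ are decreasing, which gives $(\log T)(\log k)^2\le T\log k$ and $(\log T)^3\ll T$. Alternatively, for an upper bound only, write the restricted zero sum as the full sum minus the out-of-range part and use $|a-b|^2\le 2|a|^2+2|b|^2$ together with $\int_0^T|\text{out-of-range part}|^2\ll(\log kT)^2$.

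There are also two smaller errors.

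\begin{itemize}
\item The $\frac{L'}{L}(1-\sigma+it,\chi_{_\square})$ term carries the factor $x^{1/2-\sigma}=x^{-1}$. This comes from the functional equation, not from the trivial zeros; the trivial zeros contribute $O(x^{-1/2}\tau^{-2})$, whose mean square is $O(1)$. So this term's mean square is $\ll T(\log kT)^2/x^2\le 4T$ outright. Neither your first $T(\log kT)^2/x$ nor your ``refined'' $x^{-3}$ is correct, and no refinement is needed.
\item To produce $2/(1+(t-\gamma)^2)$, your contour kernel must have poles at $s=-\tfrac12+it$ and $s=\tfrac32+it$. A pole at $\tfrac12+it$ lies on the critical line.
\end{itemize}
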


In the application we will only need upper bound estimates for $F^+_{\chi_{_\square}}(x,T)$.
For completeness, we will also give a result on its asymptotic behavior. 
This result requires GRH, since in a key point of the proof we use the 
prime number theorem in arithmetic progressions with error term.
\begin{Thm}\label{lam}
Under GRH, 
asymptotically in $x$ we have
\[
 F^+_{\chi_{_\square}}(x,T)  \sim \frac{T}{2\pi}\log x
 \]
uniformly for $ x \leq T \leq e^x$ and
$3\le k\le \sqrt{x}/(\log x)^{3}$.
\end{Thm}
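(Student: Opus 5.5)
The plan is to adapt Montgomery's original argument for $\zeta(s)$ to $L(s,\chi_{_\square})$. It has three ingredients: an explicit formula that turns a weighted sum over zeros into a Dirichlet polynomial over primes, a mean-square integration in $t$, and a mean value theorem for Dirichlet polynomials.

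\textbf{Step 1: explicit formula.} Starting from the explicit formula for $\sum_n \chi_{_\square}(n)\Lambda(n) n^{-s}$ (Lemma \ref{davenlem}), I would derive, under GRH, the analogue of Montgomery's identity. For $x\ge 2$ and real $t$,
\begin{equation*}
2\sum_{\gamma}\frac{x^{i\gamma}}{1+(t-\gamma)^2}
= -x^{-1/2}\Bigl(\sum_{n\le x}\chi_{_\square}(n)\Lambda(n)\Bigl(\frac{x}{n}\Bigr)^{-1/2+it}
+\sum_{n>x}\chi_{_\square}(n)\Lambda(n)\Bigl(\frac{x}{n}\Bigr)^{3/2+it}\Bigr)
+ E(x,t),
\end{equation*}
with $E(x,t)\ll x^{-1/2}\log(k(|t|+2))$. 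Since $\chi_{_\square}$ is non-principal, there is no pole term $x^{-1+it}\log|t|$ as in the $\zeta$ case; this only simplifies matters. I would record that $E$ comes from the trivial zeros, the $\Gamma$-factor and the possible zero at $s=0$, and that it is small after squaring and integrating.

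\textbf{Step 2: squaring and integrating.} Take $|\cdot|^2$ of both sides and integrate over $t\in[0,T]$. On the left, use the identity
\[
\int_{-\infty}^{\infty}\frac{\dx t}{(1+(t-\gamma_1)^2)(1+(t-\gamma_2)^2)}=\frac{\pi}{2}W(\gamma_1-\gamma_2).
\]
Truncating to $[0,T]$ and discarding the pairs with $\gamma_j<0$ or $\gamma_j>T$ changes the left side only by $O(\log^2(kT))$ per unit of boundary. This is controlled by the zero-counting bounds already used for the trivial estimate of $F^+_{\chi_{_\square}}$, together with the decay of the kernel. Hence the left side equals $2\pi F^+_{\chi_{_\square}}(x,T)+O(\log^3(kT))$, which is $o(T\log x)$ since $k\le\sqrt x$ and $T\ge x$ ensure $\log(kT)\ll \log T$ and $\log^3 T = o(T)$. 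The cross terms with $E$ are handled by Cauchy--Schwarz once the main mean square is known to be $\asymp T\log x$.

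\textbf{Step 3: the mean square of the Dirichlet polynomial.} For the prime sum I apply the Montgomery--Vaughan mean value theorem,
\[
\int_0^T\Bigl|\sum_n a_n n^{-it}\Bigr|^2\dx t=\sum_n|a_n|^2\bigl(T+O(n)\bigr),
\]
with $a_n=\chi_{_\square}(n)\Lambda(n)\,x^{-1}n^{1/2}$ for $n\le x$ and $a_n=\chi_{_\square}(n)\Lambda(n)\,x\,n^{-3/2}$ for $n>x$. Since $|\chi_{_\square}(n)|^2$ is the indicator of $(n,k)=1$, the main term is
\[
T\Bigl(x^{-2}\sum_{\substack{n\le x\\(n,k)=1}}\Lambda(n)^2 n+x^{2}\sum_{\substack{n>x\\(n,k)=1}}\Lambda(n)^2n^{-3}\Bigr).
\]
By partial summation from $\sum_{n\le y,(n,k)=1}\Lambda(n)\log n=y\log y-y+O(\dots)$, this equals $T(\tfrac12\log x+\tfrac12\log x+O(1))=T\log x\,(1+o(1))$.

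\textbf{The main obstacle.} The main difficulty is the accumulated error. There are two sources.
\begin{itemize}
\item[(a)] The Montgomery--Vaughan error $\sum|a_n|^2 n$ is of size $x\log x$ from $n\le x$, and likewise from the tail. With $T\ge x$ this is only $O(T\log x)$, not $o(T\log x)$. So I would follow Montgomery and Goldston: compute the off-diagonal contribution exactly rather than bounding it. This can be done either by expanding $|\cdot|^2$ and evaluating the resulting sums $\sum_{m\ne n}a_m\bar a_n (m/n)^{iT}/\log(m/n)$ with the sharp Hilbert-type inequality, or by smoothing the $t$-integral with a weight and then removing the weight at the end. In either version one only needs averages of $\chi_{_\square}(m)\chi_{_\square}(n)\Lambda(m)\Lambda(n)$ over $m,n$ near $x$, and these reduce to the prime number theorem in progressions.
\item[(b)] This is exactly where GRH and the range of $k$ enter. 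I would use $\psi(y;k,a)=y/\varphi(k)+O(\sqrt y\log^2 y)$ for each residue class. Summed over the $\varphi(k)$ classes coprime to $k$, or used directly as $\sum_{(n,k)=1}\Lambda(n)=y+O(\sqrt y\log^2(ky)+\log k\log y)$, this error must stay $o(x\log x)$ after the partial summations at scale $y\asymp x$. The condition $k\le\sqrt x/(\log x)^3$ is what keeps the error terms produced there (of shape $k\sqrt x\log^2 x$) below $x/\log x$.
\end{itemize}
Assembling Steps 1--3 then gives $2\pi F^+_{\chi_{_\square}}(x,T)=T\log x\,(1+o(1))$, which is the claimed $F^+_{\chi_{_\square}}(x,T)\sim \frac{T}{2\pi}\log x$.
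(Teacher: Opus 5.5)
Your architecture is the paper's: the identity \eqref{eqn8}, integration over $[0,T]$ giving $2\pi F^+_{\chi_{_\square}}(x,T)$ plus a negligible boundary term, the mean square of $R_1$, and $S(x)=(\log x)(1+o(1))$. But the proposal has a genuine gap exactly at the step you flag as the main obstacle. For $T\asymp x$ the Montgomery--Vaughan error $\sum|a_n|^2n\asymp x\log x$ is as large as the main term, and neither remedy you offer removes it.

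Hilbert's inequality, with the spacing $\gg 1/n$ of the frequencies $\log n$, is precisely what produces the term $\sum|a_n|^2 n$, so it cannot improve on it. ``Evaluating the off-diagonal contribution exactly'' would need asymptotics for twisted prime-pair sums $\sum_{n}\Lambda(n)\Lambda(n+d)\chi_{_\square}(n)\chi_{_\square}(n+d)$ with small $d\neq 0$. These are Hardy--Littlewood-type correlations and do not follow from the prime number theorem in progressions, even under GRH. As written, Step~3 therefore does not give $o(T\log x)$ at the lower end $T\asymp x$ of the range.

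The missing idea is that the near-diagonal terms only need to be bounded in absolute value, so $\chi_{_\square}$ is simply discarded, with the spacing parameter balanced against the diagonal. This is Lemma~\ref{GM-lemma}. Lemma~\ref{gmslem} with $\delta=\sqrt{S(x)/(Tx)}$ gives a diagonal error $\delta^{-1}S(x)$. The pairs with $0<|\log(n/m)|<2\pi\delta$ are controlled by an averaged upper bound for close prime pairs, $\sum_{n\le x}\Lambda(n)\bigl(\psi(n+h)-\psi(n)\bigr)\ll hx$ with $h\asymp\delta x$; see \eqref{E1est}--\eqref{E2est}. The total error is $O(\sqrt{TxS(x)})=O(T\sqrt{\log x})$ for $T\ge x$. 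The gain comes from the diagonal carrying $\Lambda(n)^2\approx\Lambda(n)\log x$, while neighbouring pairs $\Lambda(n)\Lambda(m)$ have average size one. Widening the window to $h\asymp\sqrt{\log x}$ when $T\asymp x$ therefore trades a factor $\log x$ for $\sqrt{\log x}$.

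Your smoothing route becomes this argument once you use a majorant and minorant of $\mathbf 1_{[0,T]}$ whose Fourier transforms are supported in $[-\delta,\delta]$. The Hilbert route can also be rescued using the true spacings of prime powers, but again only via a sieve bound for close prime pairs.

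Separately, your bound $E(x,t)\ll x^{-1/2}\log(k\tau)$ is too weak at the top of the range. It gives $\int_0^T|E|^2\ll Tx^{-1}\log^2(kT)$, which is $\asymp Tx$ when $\log T\asymp x$. The $\log(k\tau)$ term is $-\frac{L'}{L}(-\tfrac12+it,\chi_{_\square})\,x^{-1+it}$, coming from the functional equation. It is the analogue of Montgomery's $x^{-1+it}\log\tau$, which is not a pole term; only the pole contribution $O(x^{1/2}\tau^{-2})$ disappears here. So this term carries $x^{-1}$, and as in \eqref{eqn411} its mean square is $\ll Tx^{-2}\log^2(kT)\ll T$ for $T\le e^x$.
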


\medskip 
%Theorem \ref{pc1chi} establishes the bound for $x\le T \le e^x$,
%the following hypothesis extends this behavior to the range $x^\eps \le T < x$.
%As mentioned before, it is this hypothesis that allows us
%to prove our result on $n(q)$ (see Theorem \ref{least-under-pc}).
Theorem \ref{pc1chi} gives an upper bound for $F^+_{\chi_{_\square}}(x,T)$  in
the range $x\le T \le e^x$,
the following hypothesis postulates that this bound remains valid in the 
larger range $x^\eps \le T < x$.

\begin{hypothesis}
\label{PCquadratic}
Under RH for the Dirichlet $L$-function associated with  $\chi_{_\square}$, the quadratic character
modulo $k$, asymptotically in $x$ we have
\[
F^+_{\chi_{_\square}}(x,T) 
\ll T \log (kx),
\]
uniformly for 
\[
3 \le k \le \exp\bigl(x (\log x)^{-1}\bigr)
 \quad\textrm{and} \quad
 x^{\eps} \le T < x.
  \]
\end{hypothesis}

This can be seen as the analogue of the pair-correlation conjecture used by Heath-Brown
\cite{HB1982} in 1982, but for the Dirichlet $L$-function associated with the
quadratic character modulo $k$.
In fact, in our application involving $n(q)$, we use the assumption only 
with $k=q$ a prime,
so from that perspective it suffices to state it only for prime moduli.

We are now ready to state the main result of this section. 
\begin{Thm}
\label{least-under-pc}
Let $\eps>0$,
$q$ be a sufficiently large prime and 
 $n(q)$ be the least quadratic non-residue modulo $q$.
 Assuming RH for Dirichlet $L$-functions associated with the quadratic
 character modulo $q$  and Hypothesis \ref{PCquadratic}, we have
\begin{equation}
\label{nq-estim}
n(q) \ll (\log q)^{1+\eps}.
\end{equation}
\end{Thm}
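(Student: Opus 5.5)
The plan is to compare a smoothed sum of $\Lambda(n)$ with a twisted sum of $\Lambda(n)\chi_{_\square}(n)$ up to a parameter $x$. If $n(q)>x$, then $\chi_{_\square}(n)=1$ for every $n\le x$ coprime to $q$, so the two sums agree up to a negligible contribution. The argument is in the spirit of Heath-Brown's deduction under pair-correlation for primes in short intervals.

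\textbf{Step 1 (explicit formula).} Under RH for $L(s,\chi_{_\square})$, use the truncated explicit formula (Lemma \ref{davenlem}) to write
\[
\psi(x,\chi_{_\square}) = \sum_{n\le x}\Lambda(n)\chi_{_\square}(n) = -\sum_{|\gamma|\le T}\frac{x^{\rho}}{\rho} + O\Bigl(\frac{x(\log qx)^2}{T}+\log x\Bigr),
\]
and compare it with $\psi(x)=x+O(\sqrt{x}\log^2x)$. A smoothed version, for instance integrating over $y\in[x,2x]$ or using a Fejér-type weight, is preferable, since it makes the zero sum converge absolutely and the truncation harmless. If $n(q)>2x$, then $\psi(y)-\psi(y,\chi_{_\square})\ll\log q$ for $y\le 2x$, because only powers of $q$ can contribute. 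Hence the difference is $\gg x$.

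\textbf{Step 2 (mean square over a short interval).} Rather than bounding the zero sum pointwise, which under RH alone loses a factor $\log q$ and gives only $(\log q)^2$, estimate its mean square. Set $x=e^{u}$ and consider
\[
I=\int_{X}^{eX}\Bigl|\sum_{0<\gamma\le T}\frac{y^{i\gamma}}{1/2+i\gamma}\Bigr|^2\frac{dy}{y}.
\]
Expanding the square, this becomes a double sum over zeros weighted by a kernel of size about $\min(1,|\gamma_1-\gamma_2|^{-1})$, times $X^{i(\gamma_1-\gamma_2)}$. Split $\gamma$ into dyadic blocks $[T_0,2T_0]$ and use partial summation in $\gamma_1$ and $\gamma_2$ to relate each block to $F^+_{\chi_{_\square}}(X,T_0)$ with $W(\gamma_1-\gamma_2)$. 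Hypothesis \ref{PCquadratic}, applied for $X^\eps\le T_0<X$, and Theorem \ref{pc1chi}, applied for $T_0\ge X$, then give a block contribution $\ll T_0\log(qX)/T_0^2$. Summing over blocks, low zeros $T_0\le X^\eps$ are handled trivially by zero counting, contributing $\ll X^{\eps}\log q$ times a bounded factor. The outcome is $I\ll \log(qX)$ up to lower-order terms, versus the trivial $(\log qX)^2$; the saving of one logarithm is the whole point. The negative-ordinate zeros are treated identically, by the symmetry $F^+=F^-$.

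\textbf{Step 3 (conclusion).} If $n(q)>e^{e}X$, Step 1 forces $|\sum_\gamma y^{i\gamma}/\rho|\gg\sqrt{y}$ for every $y\in[X,eX]$, once the error terms are absorbed with $T$ a suitable power of $X$. This gives $I\gg X$. Combined with Step 2, we get $X\ll \log(qX)\cdot X^{O(\eps)}$, where the $X^{O(\eps)}$ factor accounts for the losses from the low-zero range and the truncation. Hence $X\ll(\log q)^{1+O(\eps)}$, which yields \eqref{nq-estim} after renaming $\eps$.

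\textbf{Main obstacle.} The hard part is Step 2: converting the weighted pair-correlation bound into the mean-square bound with the correct kernel. The difficulty is the low zeros with $T<X^\eps$, which the hypothesis does not cover and which cost the $\eps$ in the exponent, together with the partial-summation bookkeeping in two variables over a two-parameter family $(X,T)$. I would first try Gallagher's lemma, to bound $I$ by $\int|\sum_{t<\gamma\le t+1}1/\rho|^2$-type quantities, and then express the resulting short-interval zero counts via $F^+$ following Montgomery's and Goldston's arguments.
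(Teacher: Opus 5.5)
The gap is in Step 2, in the treatment of the low zeros, and that is exactly where the difficulty lies. Your claim that the zeros with $0<\gamma\le X^{\eps}$ contribute $\ll X^{\eps}\log q$ to $I$ ``by zero counting'' is false.

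Expanding the square, a pair $(\gamma_1,\gamma_2)$ enters with $\frac{1}{\rho_1\overline{\rho_2}}\,X^{i(\gamma_1-\gamma_2)}\,\frac{e^{i(\gamma_1-\gamma_2)}-1}{i(\gamma_1-\gamma_2)}$. For ordinates in $[0,1]$, of which there can be $\asymp\log q$, both the weights and the kernel are $\asymp 1$. Averaging $\log y$ over an interval of length $1$ cannot decorrelate ordinates at mutual distance $<1$. Counting therefore gives only $I_{\mathrm{low}}\ll(\log q)^{2+o(1)}$, hence $X\ll(\log q)^{2+o(1)}$, which is essentially Ankeny's GRH bound. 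Note that $X^{\eps}$ is far smaller than $\log q$ in the relevant range $X\approx(\log q)^{1+\eps}$.

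Your proposed remedy does not help. Gallagher's lemma bounds $I$ by $\int|\sum_{t<\gamma\le t+1}\rho^{-1}|^2\,\dx t$. Since $\Re(1/\rho)>0$, these short sums have no internal cancellation, so this quantity is already of size $(\log q)^2$ from $t$ near $0$. Nor can Hypothesis \ref{PCquadratic} be brought in by partial summation from $\gamma=0$. Because of the weight $1/\rho$, that route needs $\int_X^{eX}|\Sigma^+_{\chi_{_\square}}(y,u,0)|^2\,\dx y/y$, i.e.\ $F^+_{\chi_{_\square}}(X,u)$, for every $u\le X^{\eps}$, which lies outside the range $T\ge x^{\eps}$ of the hypothesis. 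Your treatment of the zeros with $\gamma>X^{\eps}$ is fine.

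The missing idea, which is how the paper proceeds, is never to put the weight $1/\rho$ on the low zeros. The paper works with differences over dyadic blocks rather than with mean squares of the pointwise zero sum. It telescopes
$\theta(x,\chi_{_\square})=\sum_{j=0}^{J}\bigl(\theta(x/2^j,\chi_{_\square})-\theta(x/2^{j+1},\chi_{_\square})\bigr)+\theta(x/2^{J+1},\chi_{_\square})$,
where $x/2^{J}\asymp\log q\log\log q$ is the smallest scale allowed by the condition $q\le\exp(x(\log x)^{-1})$. In the block $[y,2y]$, with $U=x^{\eps}$, one writes
\[
\sum_{0\le\gamma\le U}\frac{(2y)^{\rho}-y^{\rho}}{\rho}=\int_y^{2y}w^{-1/2}\,\Sigma^+_{\chi_{_\square}}(w,U,0)\,\dx w\ll\sqrt{y\,F^+_{\chi_{_\square}}(y,U)}\ll\sqrt{yU\log(qy)}.
\]
This uses Cauchy--Schwarz, Lemma \ref{lemlpq}, and Hypothesis \ref{PCquadratic} at the single height $T=U$ but at every scale $y$; the factor $\sqrt U$ is where the $\eps$ is lost. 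The zeros above $U$ are handled by Lemma \ref{lem3} and partial summation, much as in your plan.

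The bottom term is bounded trivially on the prime side: $\theta(x/2^{J+1},\chi_{_\square})\ll\log q\log\log q$. No estimate for the zero sum alone would give this. The result is the pointwise bound $\theta(x,\chi_{_\square})\ll x^{1/2+\eps}\sqrt{\log(qx)}+(\log q)(\log x)$. If $n(q)>x$, then $\theta(x,\chi_{_\square})=\theta(x)\gg x$, which is a contradiction unless $x\ll(\log q)^{1+2\eps}$. Once you have this pointwise bound, your mean square $I$ is no longer needed.
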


\begin{Rem}
If the $T$-uniformity range in Hypothesis \ref{PCquadratic} is enlarged to $(\log x)^2 \le T < x$, then 
the estimate \eqref{nq-estim} can be further refined to $n(q)\ll (\log q) (\log \log q)^2$.
\end{Rem}

\subsection{Linnik's constant}
Let $k$ and $a$ be coprime natural numbers with $1\le a\le k-1$. 
The least prime $p>a$ for which $p\equiv a\pmod{k}$ we denote by $p(k, a)$. The existence of $p(k,a)$ is guaranteed by  Dirichlet's classical theorem on primes in arithmetic progression, which states that there are infinitely many primes $p\equiv a\pmod{k}$.
A fundamental breakthrough was achieved by Linnik \cite{Linnik1944a,Linnik1944b}, who established that
\begin{equation}
\label{Linnik-estimate}
p(k, a) \ll k^{L}
\end{equation}
for some absolute constant $L$.
His proof is a masterpiece involving
a log-free zero-density theorem for Dirichlet $L$-functions
and a result on the Deuring--Heilbronn phenomenon, a ``repulsion'' effect due to 
the possible existence of the so-called ``exceptional'' zero that generates a larger
zero-free region for the Dirichlet $L$-functions.

Subsequent research has focused on refining the value of the exponent in \eqref{Linnik-estimate} (now known as ``Linnik's constant''). 
In 1992, Heath-Brown \cite{HB1992} proved
that $L \le 5.5$. In 2018, Xylouris \cite{Xylouris2011, Xylouris2018}, using improvement ideas indicated in
Heath-Brown's paper, lowered the bound to $L \le 5$, 
which has not been improved since.
 In the special case when the modulus $k$ is prime, Meng \cite{Men00} achieved $L=4.5$.

Let 
\begin{equation*}
%\label{pkdefinition}
p(k) \coloneq  \max_{\substack{1\le a \le k-1\\ (a,k)=1}} p(k,a).
\end{equation*}
In 1930, Titchmarsh \cite[Theorem 6]{Titchmarsh1930} proved, assuming 
GRH, that
\(
p(k) \ll k^2 (\log k)^4.
\)
In 1977, Kumar Murty \cite{kumarbscthesis} applied the Bombieri--Vinogradov theorem to prove that,
for every $\eps > 0$, one has
\(
p(k) < k^{2 + \eps}
\)
for almost all integers $k$. 
Assuming the Elliott--Halberstam conjecture, this bound may be improved to
\begin{equation}
\label{EH-conseq}
p(k) < k^{1 + \eps}
\end{equation}
for almost all $k$.
In \cite{mathannalen} we proved, assuming GRH and a suitable
form of the pair-correlation conjecture for Dirichlet $L$-functions,
Montgomery's conjecture on primes 
(as modified by Friedlander and Granville \cite[Conjecture 1(b), p.\,366]{FriedlanderG1989}), 
namely
\begin{equation}
\label{mont-conj-primes}
\psi(x;k,a) - \frac{x}{\varphi(k)} \ll \sqrt{\frac{x}{\varphi(k)}}\,x^\eps\ \text{~for~every~}\ 1\le k\le x^{1-\eps},
\end{equation}
holds true, where $\varphi$  denotes Euler's totient function.
As a consequence, \eqref{EH-conseq} follows for every $k$.
In 1978, Heath-Brown \cite{HB78} conjectured that
\(
p(k) \ll k (\log k)^2.
\)

In 1996,
Bach and Sorenson \cite{BachS1996}, assuming GRH, established that
\(
p(k,a) \le (1 + o(1)) (\varphi(k) \log k)^2,
\)
and also derived the explicit bound
$
p(k,a) \le 2 (k \log k)^2.
$
In 2015, Lamzouri, Li and Soundararajan
\,\cite[Corollary 1.2]{LamzouriLS2015} (see also \cite{LamzouriLS2017}) sharpened this, assuming GRH,  to
\(
p(k,a) \le (\varphi(k) \log k)^2,
\)
for $k>3$ and observed that one may derive an asymptotic bound of the form
\(
p(k,a) \le (1 - \delta + o(1)) (\varphi(k) \log k)^2
\)
for some small $\delta > 0$.

Our goal here is to obtain a (conditional) improvement of the upper bound for $p(k)$ given in \eqref{EH-conseq}.
By reworking our argument in \cite{mathannalen}, we will show
 that a suitable pair-correlation 
hypothesis for the zeros of Dirichlet $L$-functions will let us improve on \eqref{EH-conseq}. 

We define the pair-correlation functions for the zeros 
of Dirichlet $L$-functions analogous to those introduced in the previous section (and already used in our
earlier paper \cite{mathannalen}).
Assume the 
Riemann Hypothesis for Dirichlet $L$-functions (GRH).
Let $k$ be a natural number,
$\chi$ a Dirichlet character modulo $k$ and $L(s, \chi)$  the associated Dirichlet $L$-function. 
Let $\gamma_j$ be the imaginary part of the $j$-th zero (ordered by height on the half line).
Given two Dirichlet characters $\chi_1$ and $\chi_2$ modulo $k$, one can wonder to what 
extent the zeros of $L(s,\chi_1)$ are correlated with those of $L(s,\chi_2)$.
In order to measure this, we define
\begin{equation*}
%\label{Fqplus-def}
F^{+}_{k}(x,T) 
=
F^{+}_k(x,T;a)
\coloneq
\sum_{\chi_1,\chi_2 \pmod{k}} \overline{\chi_1}(a) \chi_2(a)G^{+}_{\chi_1, \chi_2} (x,T),
\end{equation*}
and
\begin{equation*}
%\label{Fq-def}
F_{k}(x,T) 
=
F_k(x,T;a)
\coloneq
\sum_{\chi_1,\chi_2 \pmod{k}} \overline{\chi_1}(a) \chi_2(a)G_{\chi_1, \chi_2} (x,T),
\end{equation*}
where
\[
G^{+}_{\chi_1,\chi_2}(x,T) 
\coloneq
\sum_{\substack{ 0 \le \gamma_j \le T, \, j = 1,2 \\ L(1/2+ i\gamma_j, \chi_j) = 0}} 
x^{i(\gamma_1 - \gamma_2)} W(\gamma_1 - \gamma_2),
\]
and
\[
G_{\chi_1,\chi_2}(x,T) 
\coloneq
\sum_{\substack{ \vert \gamma_j \vert \le T, \, j = 1,2 \\ L(1/2+ i\gamma_j, \chi_j) = 0}} 
x^{i(\gamma_1 - \gamma_2)} W(\gamma_1 - \gamma_2).
\]

Now, using classical results about the 
distribution of zeros of Dirichlet $L$-functions (\cite[Ch.~16, eq.~(1)]{Davenport} 
and  \cite[Ch.~16, Lemma]{Davenport}), 
we have 
\begin{equation*}
    \vert G^+_{\chi_1, \chi_2}(x,T) \vert  
   \le
   \sum_{\substack{0 \le  \gamma_j  \le T, \, j = 1, 2 \\ L(1/2+ i\gamma_j, \chi_j) = 0
   }}
\frac{4}{4 + (\gamma_1 - \gamma_2)^2}
\ll 
T (\log(kT))^2,
\end{equation*}
uniformly in $x$ as $T \to \infty$.
Therefore, under GRH we  have
\begin{equation*}
%\label{trivial1}
  F^{+}_{k}(x,T)
\ll T (\varphi(k) \log (kT))^2,
\end{equation*}
uniformly in $x$ as $T \to \infty$.

We recall the following result from \cite{mathannalen}, which provides an estimate for the pair-correlation function when $T$ is large relative to $x$:

\begin{Thm*}\cite[Theorem~2]{mathannalen}
\label{yalmodifiedcor}
Let  $\eps>0$ be arbitrary and fixed.
Under GRH, as \( x \to \infty \),  we have
\[
F_{k}(x,T)  
\ll \varphi(k)T \log x
\quad
\textrm{and}
\quad
F^{+}_{k}(x,T) 
\ll \varphi(k)T \log x
\]
uniformly for  every $a$ coprime with $k$ and
\[
1 \le k \le  x^{1-\eps}
\quad\textrm{and} \quad
x \le T
\le \exp(\sqrt{x}).
\]
\end{Thm*}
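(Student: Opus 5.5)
We follow Montgomery's 1973 argument in the version adapted to arithmetic progressions. The key identity is an explicit-formula relation linking $F_k(x,T)$ to the mean square of a weighted sum over zeros. For a character $\chi$ mod $k$ and $x\ge1$, we start from the classical formula (Montgomery's Lemma), valid under GRH. It expresses
\[
\sum_{\gamma_\chi} \frac{2\,x^{i\gamma_\chi}}{1+(t-\gamma_\chi)^2}
\]
as $x^{-1/2}\sum_{n\le x}\Lambda(n)\chi(n)(x/n)^{-1/2+it}+x^{1/2}\sum_{n>x}\Lambda(n)\chi(n)(x/n)^{3/2+it}$, plus lower-order terms. These terms include $-\chi$-dependent contributions from the trivial zeros (of size $\ll x^{-1/2}\log(k(|t|+2))$) and, for the principal character only, the pole contribution $x^{-1/2+it}\cdot\frac{\text{const}}{(1/2-it)(3/2-it)}$-type terms.

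Next we twist by $\overline{\chi}(a)$ and sum over $\chi$ mod $k$. On the zero side this gives $\sum_\chi\overline{\chi}(a)\sum_{\gamma_\chi}\cdots$. On the prime side, orthogonality collapses the character sum to $\varphi(k)$ times sums over $n\equiv a\pmod k$. We then take the squared modulus and integrate over $t\in[0,T]$ (respectively $[-T,T]$). Expanding the left side, and using $\int_{\mathbb R}\frac{4\,dt}{(1+(t-\gamma_1)^2)(1+(t-\gamma_2)^2)}=\pi W(\gamma_1-\gamma_2)$, yields $\pi F_k^{(+)}(x,T)$. The truncation of the integral to $[0,T]$ instead of $\mathbb R$ costs an error bounded by zero counts near $0$ and $T$. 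This error is $\ll \varphi(k)\log^2(kT)$ per unit, which is $\ll \varphi(k)T\log x$ since $\log T\le\sqrt x$, so $\log^2(kT)\ll x\ll T\log x$.

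It therefore suffices to show that the integral of the square of the prime side is $\ll \varphi(k)T\log x$. After factoring out $\varphi(k)$, the prime side is $\varphi(k)\,x^{-1/2}\sum_{n\equiv a\,(k)}\Lambda(n)a_n n^{-it}$, with weights $a_n=(n/x)^{1/2}$ for $n\le x$ and $(x/n)^{3/2}$ for $n>x$. Its square is $\varphi(k)^2x^{-1}|\sum\cdots|^2$, which by the Montgomery--Vaughan mean value theorem has integral
\[
\varphi(k)^2x^{-1}\sum_{n\equiv a\,(k)}(T+O(n))\Lambda(n)^2a_n^2 .
\]
By Brun--Titchmarsh, $\sum_{n\equiv a\,(k)}\Lambda(n)^2a_n^2\ll x\log x/\varphi(k)$, valid since $k\le x^{1-\eps}$. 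The $O(n)$ part contributes $\ll \varphi(k)x\log x\ll \varphi(k)T\log x$. The $T$-part gives $\varphi(k)T\log x$, and this is where the normalisation $\varphi(k)$ (not $\varphi(k)^2$) arises. The principal-character pole term contributes $O(x^{-1}T)$ after integration and is harmless. The trivial-zero terms, summed over $\chi$, are $\ll\varphi(k)x^{-1/2}\log(kT)$; their square integrated is $\ll \varphi(k)^2x^{-1}T\log^2(kT)$. This is $\ll\varphi(k)T\log x$ when $\varphi(k)\log^2(kT)\le x\log x$, and that holds because $\varphi(k)\le x^{1-\eps}$ and $\log T\le\sqrt x$. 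This last point is exactly where the upper limit $\exp(\sqrt x)$ on $T$ enters.

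The main obstacle is the cross terms. Since $|A+B|^2\le2|A|^2+2|B|^2$, it is enough to bound each squared piece, so Cauchy--Schwarz handles them and no delicate cancellation is needed. The subtle point is instead making the explicit formula uniform in $k$ and correctly accounting for non-primitive characters. Imprimitive $\chi$ differ from their primitive inducers only at primes dividing $k$, contributing $\ll \log k$ per character, and these extra terms are absorbed in the same way as the trivial-zero terms.
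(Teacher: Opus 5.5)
Your strategy is the right one and matches what the paper does in its quadratic analogue (the proof of Theorem \ref{pc1chi}). You use Montgomery's identity \eqref{Montgomery-eq-(22)} at $\sigma=3/2$, twisted by $\overline{\chi}(a)$, so that orthogonality leaves a single factor $\varphi(k)$ on the prime side. Using the Montgomery--Vaughan mean value theorem instead of Lemma \ref{gmslem} (as in Lemma \ref{GM-lemma}) is fine for an upper bound. The $O(n)$ term costs $\varphi(k)x\log x$, which is one place where $T\ge x$ is used. Lemma \ref{gmslem} buys the sharper error $O(\sqrt{TxS})$, which matters only for an asymptotic at $T\asymp x$, as in Theorem \ref{lam}. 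However, your accounting of the secondary terms is wrong in a way that breaks a step as written.

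\textbf{The $L'/L$ term.} At $\sigma=3/2$ the term $-\frac{L'}{L}(1-\sigma+it,\chi)\,x^{1/2-\sigma+it}$ equals $x^{-1+it}(\log(k\tau)+O(1))$ (the paper's $R_2$); it is not of size $x^{-1/2}\log(k\tau)$. The only piece of size $x^{-1/2}$ is the trivial-zero sum $O(x^{-1/2}\tau^{-1})$ (the paper's $R_3$), which carries no logarithm and contributes $O(\varphi(k)^2/x)$.
\begin{itemize}
\item With your size you need $\varphi(k)\log^2(kT)\le x\log x$. Your claim that this follows from $\varphi(k)\le x^{1-\eps}$ and $\log T\le\sqrt{x}$ is false. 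For $k$ a prime of size $x^{1-\eps}$ and $\log T\asymp\sqrt{x}$, the left side is of order $x^{2-\eps}$, so for such $k$ your argument only reaches $\log T\ll x^{\eps/2}(\log x)^{1/2}$.
\item With the correct factor $x^{-1}$, the mean square is $\ll\varphi(k)^2x^{-2}T\log^2(kT)$. The requirement becomes $\varphi(k)\log^2(kT)\ll x^2\log x$, which does hold for $\log T\le\sqrt{x}$. This, not your inequality, is where the upper limit on $T$ is used.
\end{itemize}

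\textbf{The pole term.} The pole term of $\chi_0$ has size $x^{1/2}\tau^{-2}$ (Montgomery's $O(x^{1/2}\tau^{-2})$ term), not $x^{-1/2}\tau^{-2}$. Its mean square is of order $x$, which is harmless only because $T\ge x$.

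\textbf{Smaller points.}
\begin{itemize}
\item The imprimitivity correction is $\ll x^{-1/2}\log k$ per character, not $\log k$. Each prime power carries weight at most $x^{-1/2}\Lambda(n)$, and the powers of a fixed $p\mid k$ form a geometric series peaking near $x$. Your stated $\log k$ would leave an unabsorbed $\varphi(k)^2T\log^2 k$.
\item The truncation error has no orthogonality available on the zero side. It is therefore $\ll\varphi(k)^2(\log T)\log^2(kT)$ (the bounds \eqref{eqn36}--\eqref{eqn35} summed over all $\varphi(k)^2$ pairs), not $\varphi(k)\log^2(kT)$. This is still admissible, since $\varphi(k)\le x^{1-\eps}\le T^{1-\eps}$.
\item Brun--Titchmarsh covers only primes. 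The proper prime powers $n\equiv a \pmod{k}$ need a separate, easy count once $\varphi(k)>\sqrt{x}$.
\end{itemize}
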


A discussion about the necessity of having to formulate separate results and conjectures  for both
$F_{k}(x,T)$ and $F^{+}_{k}(x,T)$ can be found  in the Introduction of \cite{mathannalen}.
In the same paper it was also explained why
the reduction of the exponent of $\varphi(k)$ from $2$ to $1$ 
in the estimates for $F_{k}(x,T)$ and $F^{+}_{k}(x,T)$ is the most important
feature of such a result. 
This reduction was achieved by exploiting the orthogonality of Dirichlet
characters in the definition of $F_{k}(x,T)$ and $F^{+}_{k}(x,T)$.

In \cite[Remark 1]{mathannalen}  we also noted that the $k$-uniformity range
in Theorem \ref{yalmodifiedcor} can be extended to $1 \le k \le  x/ (\log x)^{1+\eps}$ 
at the cost of weaker estimates, with the bounds for $F_{k}(x,T)$ and 
$F_k^{+}(x,T)$ being of order $\varphi(k)\, T (\log x)^{2}$.

We will show that having estimates for 
$F_{k}(x,T)$  and $F^{+}_{k}(x,T)$ in the range
where $T$ is small relative to $x$, allows one to obtain  estimates for primes in arithmetic progressions sharper than
\eqref{mont-conj-primes}. Specifically, we will consider the implications of the following hypothesis.

\begin{hypothesis}\label{conjgpc2}
Let $c_1>0$ and $c_2\in (1/2,1)$ be fixed.
Under GRH, we have, as \( x \to \infty \), 
\[
F_{k}(x,T)  
\ll \varphi(k)T 
\exp\bigl(c_1(\log x)^{c_2}\bigr)
\quad
\textrm{and}
\quad
F^{+}_{k}(x,T)
\ll \varphi(k)T 
\exp\bigl(c_1(\log x)^{c_2}\bigr)
\]
uniformly for 
\[
\exp\bigl(c_1(\log x)^{c_2}\bigr)
<   ~ k \le 
x\exp\bigl(-2c_1(\log x)^{c_2}\bigr)
\quad\textrm{and} \quad
\exp\bigl(c_1(\log x)^{c_2}\bigr)
\le T
< x.
\]
\end{hypothesis}

Using it, in conjunction with the GRH, we are able to prove the following result.
 
\begin{Thm}\label{mor5} 
Let $c_1>0$ and $c_2\in (1/2,1)$ be fixed.
Assume that both GRH and Hypothesis \ref{conjgpc2} 
hold true.  
Then
\begin{equation}
\notag
 \psi(x;k,a) - \frac{x}{\varphi(k)} \ll \sqrt{\frac{x}{\varphi(k)}}\,\exp\bigl(c_1(\log x)^{c_2}\bigr)
\end{equation}
for $x$ sufficiently large and
uniformly for
$ 1 \le  k \le x \exp\bigl(-2c_1(\log x)^{c_2}\bigr)$ and $1\le a\le k$ with $(a,k)=1$.
\end{Thm}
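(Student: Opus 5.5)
We follow the strategy of \cite{mathannalen}, which goes back to Gallagher--Mueller and Heath-Brown: use a Fejér-type kernel to express a weighted second moment of the prime error term in terms of the pair-correlation function, and then pass from that average to a pointwise bound by a short-interval argument. The main tools are the explicit formula for $\psi(x,\chi)$ under GRH, orthogonality of characters, and Hypothesis~\ref{conjgpc2}.

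\textbf{Small moduli.} When $k\le \exp\bigl(c_1(\log x)^{c_2}\bigr)$, GRH gives $\psi(x;k,a)-x/\varphi(k)\ll \sqrt{x}(\log x)^2$. Since $\sqrt{\varphi(k)}(\log x)^2\ll \exp\bigl(c_1(\log x)^{c_2}\bigr)$ in this range, the claim follows, so we may assume $k>\exp\bigl(c_1(\log x)^{c_2}\bigr)$.

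\textbf{Setting up the second moment.} Put $E(y)=\psi(y;k,a)-y/\varphi(k)$. Under GRH the explicit formula gives
\[
E(y)=-\frac{1}{\varphi(k)}\sum_\chi \overline{\chi}(a)\sum_{|\gamma|\le T}\frac{y^{\rho}}{\rho}+O\Bigl(\frac{y(\log ky)^2}{T}\Bigr).
\]
We use the identity
\[
\int_{-\infty}^{\infty}\Bigl|\sum_\chi\overline\chi(a)\sum_\gamma \frac{x^{i\gamma}}{1+(t-\gamma)^2}\Bigr|^2 dt \asymp \sum_{\chi_1,\chi_2}\overline{\chi_1}(a)\chi_2(a)\sum_{\gamma_1,\gamma_2}x^{i(\gamma_1-\gamma_2)}W(\gamma_1-\gamma_2),
\]
together with the Gallagher-type lemma used in \cite{mathannalen}. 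These relate
\[
\int_{x}^{x e^{1/U}}\bigl|E(y)-E(y e^{-1/U})\bigr|^2\, y^{-2}\,dy
\]
and $\int_1^{x}|E(y)|^2y^{-2}dy$-type averages to integrals in $T$ of $F_k(x,T)$ and $F^+_k(x,T)$. The resulting bound is $\varphi(k)^{-2}\int F_k(x,T)\,T^{-2}\,dT$-type expressions.

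\textbf{Applying the hypotheses.} We split the $T$-range into three parts.
\begin{itemize}
\item For $T<\exp\bigl(c_1(\log x)^{c_2}\bigr)$, use the trivial bound $\varphi(k)^2T(\log kT)^2$. It is weighted by $T^{-2}$ and by a factor that is small because of the kernel, but only $\varphi(k)$ must appear overall. So this range has to be handled through orthogonality, using the diagonal large-sieve-type estimate $\sum_\chi|\cdot|^2$. Alternatively one can note that low zeros contribute $\varphi(k)^{-1}\sum_\chi\sqrt{x}\,(\log k)^2$-type terms, which we bound in mean square using Montgomery--Vaughan and orthogonality.
\item For $\exp\bigl(c_1(\log x)^{c_2}\bigr)\le T<x$, use Hypothesis~\ref{conjgpc2}.
\item For $T\ge x$, use \cite[Theorem~2]{mathannalen}.
\end{itemize}
Integrating gives a short-interval mean-square bound of the form
\[
\int_x^{2x}\bigl|E(y+h)-E(y)\bigr|^2\,dy\ll \frac{x\,h}{\varphi(k)}\exp\bigl(2c_1(\log x)^{c_2}\bigr)
\]
for $h$ down to about $x\exp\bigl(-c_1(\log x)^{c_2}\bigr)$, and also $\int_x^{2x}|E(y)|^2dy\ll x^2\varphi(k)^{-1}\exp\bigl(2c_1(\log x)^{c_2}\bigr)$.

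\textbf{From mean square to pointwise.} We follow Heath-Brown's argument as reworked in \cite{mathannalen}. Let $h=\sqrt{x\varphi(k)}\exp\bigl(c_1(\log x)^{c_2}\bigr)$; the condition $k\le x\exp\bigl(-2c_1(\log x)^{c_2}\bigr)$ guarantees $h\le x$. By monotonicity of $\psi(\cdot;k,a)$,
\[
E(x)\le \frac1h\int_x^{x+h}E(y)\,dy+\frac{h}{\varphi(k)},
\]
and similarly from below. The term $h/\varphi(k)$ is $\sqrt{x/\varphi(k)}\exp\bigl(c_1(\log x)^{c_2}\bigr)$, which is acceptable. The averaged term is bounded through the explicit formula, where integration over $y$ gives zeros the extra factor $\min(1,x/(h|\gamma|))$, followed by Cauchy--Schwarz against the pair-correlation sums. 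That is, we express $\bigl|\sum_\chi\overline\chi(a)\sum_\gamma x^{i\gamma}c(\gamma)\bigr|^2$ as a combination of $F_k(x,T)$ and $F^+_k(x,T)$ via partial summation in $T$.

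\textbf{Main obstacle.} The hardest step is this last one: obtaining only one factor of $\varphi(k)$, and controlling the sign-indefinite $F^+$-type pieces when converting the double sum over zeros into the pair-correlation functions. This requires the careful smoothing of \cite{mathannalen}, now carried out with $T$ as small as $\exp\bigl(c_1(\log x)^{c_2}\bigr)$. The restriction $c_2>1/2$ should be what absorbs the loss from the truncation error $x(\log x)^2/T$.
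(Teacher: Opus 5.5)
\textbf{There is a genuine gap in your proposal.} It sits where you say the range $T<\exp\bigl(c_1(\log x)^{c_2}\bigr)$ ``has to be handled through orthogonality''. This is the contribution of the low-lying zeros $|\gamma|\le U:=\exp\bigl(c_1(\log x)^{c_2}\bigr)$, and it must be bounded at the individual point $x$, uniformly in $a$. None of the tools you name does this:
\begin{itemize}
\item Hypothesis~\ref{conjgpc2} says nothing about $F_k(x,T)$ or $F^+_k(x,T)$ for $T<U$. These quadratic forms are not monotone in $T$, so you cannot bound them by their value at $T=U$.
\item The trivial bound $\varphi(k)^{-1}\sum_\chi\sum_{|\gamma|\le U}\sqrt{x}/|\rho|\ll\sqrt{x}(\log x)^2$ loses a factor of roughly $\sqrt{\varphi(k)}$. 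That is fatal once $\varphi(k)$ exceeds about $\exp\bigl(2c_1(\log x)^{c_2}\bigr)$.
\item Orthogonality of characters, the large sieve, and Montgomery--Vaughan mean values only control averages over $a$ or over $x$, not an individual value.
\end{itemize}

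Your smoothing step does not rescue this. Averaging over $[x,x+h]$ multiplies the term of a zero $\rho$ by $1+O(h|\gamma|/x)$. For $|\gamma|\le U$ and, say, $k\le\sqrt{x}$, this factor is $1+o(1)$. So you are still left with $\sum_\chi\overline{\chi}(a)\sum_{|\gamma|\le U}x^{\rho}/\rho$ at a single point. The partial-summation/Sobolev--Gallagher route of Lemma~\ref{lem3-mathannalen} would then need $\max_{0\le t\le U}F^+_k(x,t)$, which the hypothesis does not give. Your mean-square bounds for $E$ hit the same small-$T$ problem. They are asserted rather than derived, and your pointwise step never actually uses them.

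\textbf{The missing idea is the dyadic decomposition the paper uses, following Heath-Brown.}
\begin{itemize}
\item Write $\psi(x;k,a)-x/\varphi(k)$ as $\sum_{j=0}^{J}\bigl(\psi(x/2^j;k,a)-\psi(x/2^{j+1};k,a)-x/(2^{j+1}\varphi(k))\bigr)$ plus a bottom term at $x/2^{J+1}$. That point has size about $k\exp\bigl(2c_1(\log x)^{c_2}\bigr)$, and Brun--Titchmarsh controls the bottom term.
\item Apply the explicit formula to each block with $Z=x/2^j$. The truncation error is then only $O((\log x)^2)$ per block, so you need no smoothing in $x$ and no input for $T\ge x$.
\item For a block $[y,2y]$ the zeros enter through $((2y)^{\rho}-y^{\rho})/\rho=\int_y^{2y}w^{\rho-1}\,dw$. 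The low zeros therefore contribute $\int_y^{2y}w^{-1/2}\Sigma(w,U,0)\,dw$.
\item By Cauchy--Schwarz and Lemma~\ref{lemlp-mathannalen}, this is $\ll\sqrt{yF_k(y,U)}$. The differencing itself supplies the average over $w$. Hypothesis~\ref{conjgpc2} is needed only at the single height $T=U$, giving $\sqrt{\varphi(k)y}\,\exp\bigl(c_1(\log x)^{c_2}\bigr)$.
\item The zeros with $U<|\gamma|\le Z$ are bounded pointwise by partial summation and Lemma~\ref{lem3-mathannalen}, which involves only $F^+_k(cy,t)$ with $t\ge U$. Negative ordinates reduce to positive ones by Lemma~\ref{conj-trick-lemma}.
\end{itemize}
This gives Lemma~\ref{lem21}, and summing the geometric series over $j$ proves the theorem.
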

The quality of the upper bound and of the $k$-uniformity level in Theorem \ref{mor5} 
are crucial features.
In an email communication about a previous version of this work,
Granville remarked that the results in \cite{FriedlanderG1989} imply that
\begin{equation}
\label{FG_cond1}
 \psi(x;k,a) - \frac{x}{\varphi(k)} \gg u^{-u}\frac{x}{\varphi(k)},
\end{equation}
where $u$ is such that $x/k = (\log x)^u$ with $1\le k \le x$. 
As a consequence, 
Theorem \ref{mor5} 
with $\exp\bigl(c_1(\log x)^{c_2}\bigr)$ replaced by
$(\log x)^A$ uniformly for $ 1 \le  k \le x (\log x)^{-B}$, $A,B>0$,
cannot hold true.
Moreover, Conjecture 1(a) of \cite[p.~366]{FriedlanderG1989} states that 
\begin{equation}
\label{FG_cond2}
 \psi(x;k,a) \ll_\eps \frac{x}{\varphi(k)} 
\end{equation}
is admissible for  $1\le k \le x/(\log x)^{2+\eps}$. 
We note that the $k$-uniformity level 
in the statement of Hypothesis \ref{conjgpc2} is sharp, in the sense that otherwise the resulting variant
of Theorem \ref{mor5} would be in contradiction with formulae 
\eqref{FG_cond1}-\eqref{FG_cond2}.

We also remark that the sub-range $1\le k \le \exp\bigl(c_1(\log x)^{c_2}\bigr)$ in Theorem \ref{mor5}
is handled by using the well-known estimate
\begin{equation}
\label{GRH-only}
\psi(x;k,a)- \frac{x}{\varphi(k)}
\ll \sqrt{x} (\log x)^2,
\end{equation}
that can be established using GRH.

The technique we use in proving Theorem \ref{mor5} 
is similar to that used by Heath-Brown in his proof of Theorem 1 in \cite{HB1982}.

As a consequence of Theorem \ref{mor5} we obtain the following estimate for $p(k)$.
\begin{Thm}
\label{least-prime-gpc}
Given coprime natural numbers $a$ and $k$ with $1\le a\le k-1$, denote by
$p(k, a)$ be the least prime  
greater than $a$ such that $p \equiv a \pmod{k}$.
Assume that both GRH and Hypothesis \ref{conjgpc2}
hold true.  Then 
there exist $A\in (1/2,1)$ and $B>0$ such that
\[
p(k)=\max_{\substack{1\le a \le k-1 \\ (a,k)= 1}} p(k,a)\ll \varphi(k)\exp\bigl(B(\log k)^{A}\bigr).
\]
\end{Thm}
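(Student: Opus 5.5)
We deduce Theorem \ref{least-prime-gpc} directly from Theorem \ref{mor5}. The idea is to choose $x$ so that the main term $x/\varphi(k)$ beats both the error term and the contribution of prime powers. Then $\psi(x;k,a)-\psi(a;k,a)$ restricted to primes is positive, which forces a prime $p\equiv a \pmod k$ with $a<p\le x$.

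Fix $a$ coprime to $k$ and write $E(x)=\exp\bigl(c_1(\log x)^{c_2}\bigr)$. We take
\[
x = \varphi(k)\exp\bigl(B(\log k)^{A}\bigr),
\]
with $A\in(c_2,1)$ and $B>0$ to be chosen. Since $\log x\asymp \log k$, we have $E(x)=\exp\bigl(O((\log k)^{c_2})\bigr)$, which is $o\bigl(\exp(\tfrac{B}{2}(\log k)^A)\bigr)$.

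First we check that Theorem \ref{mor5} applies, i.e.\ that $k\le x\,E(x)^{-2}$. Because $\varphi(k)\gg k/\log\log k$, this reduces to
\[
\exp\bigl(B(\log k)^A\bigr)\ge E(x)^2\log\log k,
\]
which holds for $k$ large since $A>c_2$. Small $k$ are absorbed into the implied constant.

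Theorem \ref{mor5} then gives
\[
\psi(x;k,a)\ge \frac{x}{\varphi(k)} - C\sqrt{\frac{x}{\varphi(k)}}\,E(x).
\]
Here $x/\varphi(k)=\exp\bigl(B(\log k)^A\bigr)$, so $\sqrt{x/\varphi(k)}\,E(x)=\exp\bigl(\tfrac B2(\log k)^A+O((\log k)^{c_2})\bigr)$. This is $o(x/\varphi(k))$, hence
\[
\psi(x;k,a)\ge \tfrac12\, x/\varphi(k).
\]

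Next we remove the unwanted terms from $\psi(x;k,a)$:
\begin{itemize}
\item Proper prime powers $p^m\le x$ with $m\ge2$ contribute at most $\sum_{m\ge 2}\psi(x^{1/m})\ll \sqrt{x}\log x$.
\item Primes $p\le a<k$ contribute at most $\psi(k)\ll k$.
\end{itemize}
Both must be $o(x/\varphi(k))$, and this is the one step needing care. The prime-power term is fine because $\sqrt{x}\log x\le \sqrt{\varphi(k)}\exp\bigl(\tfrac B2(\log k)^A\bigr)\log x$, which is much smaller than $x/\varphi(k)$ only if $\varphi(k)$ is not too big. That is false in general, so the crude bound is insufficient.

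Instead we bound prime powers in the progression directly. The number of $n=p^m\le x$ with $m\ge2$ and $n\equiv a\pmod k$ is at most $\sum_{m\ge 2}\bigl(x^{1/m}/k+1\bigr)$ in the worst case. This is too crude, so we count differently: for each $m$, the number of residues $r \bmod k$ with $r^m\equiv a$ is $\ll k^{o(1)}$, and there are $\log x$ values of $m$. Hence the count is
\[
\ll \sum_{2\le m\le \log_2 x}(\log x)\, k^{o(1)}\bigl(x^{1/m}/k+1\bigr)\ll x^{1/2}k^{-1+o(1)}+k^{o(1)}.
\]
Using $x\ge k$, both terms are $o(x/\varphi(k))$. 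For the primes $p\le a$, note $\sum_{p\le a,\,p\equiv a}\log p\le \log a\cdot\#\{n\le k:\ n\equiv a\}\ll \log k$.

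Therefore $\sum_{a<p\le x,\ p\equiv a\ (k)}\log p>0$, so $p(k,a)\le x$ uniformly in $a$. This gives $p(k)\ll\varphi(k)\exp\bigl(B(\log k)^A\bigr)$.

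The main obstacle I expect is the prime-power bookkeeping, in particular the bound $\ll k^{o(1)}$ on the number of $m$-th roots of $a$ modulo $k$. This holds since the count is at most $\prod_{p^e\| k} 2m\le (2m)^{\omega(k)}$, and $(2\log x)^{\omega(k)}=k^{o(1)}$. Everything else is a direct substitution into Theorem \ref{mor5}.
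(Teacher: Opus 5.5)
\textbf{Verdict: there is a genuine gap, in the prime-power step.} The overall framework is the paper's: apply Theorem~\ref{mor5} at $x=\varphi(k)\exp(B(\log k)^A)$, check that the main term dominates, then discard the non-prime contributions. Your choice $A\in(c_2,1)$ is in fact more careful than the paper's $A=c_2$, $B=2c_1$. With that choice and $x\asymp\varphi(k)\exp(2c_1(\log k)^{c_2})$ one has $(\log x)^{c_2}-(\log k)^{c_2}\asymp(\log k)^{2c_2-1}\to\infty$, so the range condition $k\le x\exp(-2c_1(\log x)^{c_2})$ of Theorem~\ref{mor5} is not met.

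The prime-power step fails, and not as a matter of bookkeeping. First, $(2\log x)^{\omega(k)}$ is not $k^{o(1)}$: when $\omega(k)\sim\log k/\log\log k$ it equals $k^{1+o(1)}$. Second, and decisively, even a correct $k^{o(1)}$ bound would be useless. The quantity you must beat, $x/\varphi(k)=\exp(B(\log k)^A)$, is itself $k^{o(1)}$, so your sentence ``both terms are $o(x/\varphi(k))$'' is unjustified. Concretely, take $k$ the product of the first $r$ primes and $a=1$. Already for $m=2$ there are $2^{\omega(k)-1}=\exp\bigl((\log 2+o(1))\log k/\log\log k\bigr)$ square roots of $1$ modulo $k$, which exceeds $\exp(B(\log k)^A)$ for every $A<1$ and $B>0$. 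Since $x^{1/m}<k$, each residue class contains at most one candidate, so your bound is at least the number of roots. No count by residue classes can therefore give $o(x/\varphi(k))$, whatever $A,B$ you choose.

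What is missing is an argument that uses the fact that the bases lie in the short range $[1,x^{1/m}]$. Let $n_0$ be the least solution of $n^m\equiv a\pmod k$ with $n\le x^{1/m}$. Every other solution gives a distinct $j$ with $1\le j<x/k$ and $n_0^m+jk$ an $m$-th power. So you are really counting $m$-th powers in an arithmetic progression of length $J=x/k$.

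The trivial bound $J$ is still too weak. At $m=2$, with weight $\log p\approx\frac12\log x$, it exceeds $x/\varphi(k)\ll J\log\log k$ by a factor $\gg\log x/\log\log k$. You need a genuine saving, for example from the large sieve:
\begin{itemize}
\item Sieving $j$ by primes $q\le\sqrt J$ with $q\nmid 2k$, each of which excludes $(q-1)/2$ residue classes of $j$, gives $\ll\sqrt J\log J$ squares.
\item Even $m$ reduce to squares via $n\mapsto n^{m/2}$.
\item Odd $m$ can be handled in the same way using primes $q\equiv 1\pmod \ell$, where $\ell$ is the least prime factor of $m$.
\end{itemize}

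You cannot borrow this step from the paper either. Its bound $\psi(x;k,a)-\theta(x;k,a)\ll\pi(\sqrt x;k,a)\log x$ via Brun--Titchmarsh has two problems. It overlooks that $p^m\equiv a$ does not force $p\equiv a\pmod k$. And Brun--Titchmarsh requires $\sqrt x>k$, which fails in this range.
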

The constants $A,B$ appearing above correspond with the constants $c_2$, respectively $2c_1$
in Theorem \ref{mor5}.

\begin{Rem}
Unfortunately, this technique does not bring us closer
to establishing Heath-Brown's conjectural estimate $p(k)\ll k (\log k)^2$.
Namely, we would need a larger uniformity range for $k$ in Hypothesis \ref{conjgpc2}
and in Theorem \ref{mor5}, 
contradicting the results of Friedlander and Granville
\cite{FriedlanderG1989} just described above.
\end{Rem}

\section{Preliminaries}\label{pre}
\subsection{Prime number distribution}
In this section, we recall the material we need on the distribution of prime numbers, using the notations
\begin{equation*}
\pi(t) \coloneq \sum_{p \le  t} 1, \quad \quad
\pi(t;k,a) \coloneq \sum_{\substack{p \le  t \\ p \equiv a  \pmod{k}}} 1,
\end{equation*}
and
\begin{equation*}
\psi(t) \coloneq \sum_{n \le  t} \Lambda(n), \quad \quad
\psi(t;k,a) \coloneq \sum_{\substack{n \le  t \\ n \equiv a  \pmod{k}}} \Lambda(n),  
\quad \quad 
\psi(t,\chi) \coloneq \sum_{n \le t} \Lambda(n) \chi(n),
\end{equation*}
where $\Lambda$ denotes the von Mangoldt function and $\chi$ a Dirichlet character
modulo $k$.
Two equivalent forms of the Prime Number Theorem are
\begin{equation*}
%\label{pnt}
\pi(t)\sim
\int_{2}^{t} \frac{\, \dx u}{\log u}
\quad
\textrm{and}
\quad
\psi(t)\sim t
\quad (t\to \infty).
\end{equation*}
For fixed coprime integers $a$ and $k$, we have asymptotic equidistribution: 
\begin{equation*}
%\label{equi}
\pi(t;k,a)\sim \frac{\pi(t)}{\varphi(k)},
\quad
\textrm{and}
\quad
\psi(t;k,a)\sim \frac{\psi(t)}{\varphi(k)}
\quad (t\to \infty).
\end{equation*}

An important tool we will use is the following theorem (for a proof, see, e.g.,
Montgomery--Vaughan \cite[Sect.~3--6]{MVsieve}).
\begin{CThm}[Brun--Titchmarsh theorem]
\label{BT-thm}
Let $x,y>0$ and $a,k$ be coprime positive integers.
Then, uniformly for all $y>k$, we have
\begin{equation*} 
%\label{BT-estim} 
\pi(x+y;k,a) - \pi(x;k,a) < \frac{2y}{\varphi(k) \log(y/k)}.
\end{equation*}
\end{CThm}

Starting point for our deliberations is an explicit truncated form of the 
von Mangoldt explicit formula, which we state in the classical, respectively 
Dirichlet $L$-function case.

\begin{lem}\cite[Ch.~17]{Davenport}.
%\label{davenlem2}
Let $2 \le Z \le x$. Assuming RH, we 
have, as $ x \to \infty,$
\[
\psi(x) = x - \sum_{\substack{ \vert \gamma \vert  \le  Z \\
\zeta(1/2 + i\gamma) = 0}} \frac{x^{1/2 + i\gamma}}{1/2 + i\gamma} +
O\Bigl(\frac{x}{Z} (\log (xZ))^2 \Bigr).
\]
\end{lem}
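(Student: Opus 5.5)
This is the classical truncated explicit formula, so I would follow the route in Davenport, Ch.~17, specialised to RH. The plan is to apply Perron's formula to $-\zeta'/\zeta(s)$, shift the contour to the left, and use RH only at the end, to write each zero as $\rho=1/2+i\gamma$.

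\textbf{Perron step.} Set $c=1+1/\log x$ and use the truncated Perron formula with the standard error term:
\[
\psi_0(x)=\frac{1}{2\pi i}\int_{c-iZ}^{c+iZ}\Bigl(-\frac{\zeta'}{\zeta}(s)\Bigr)\frac{x^s}{s}\,\dx s + O\Bigl(\sum_{n}\Lambda(n)\Bigl(\frac{x}{n}\Bigr)^{c}\min\Bigl(1,\frac{1}{Z|\log(x/n)|}\Bigr)\Bigr).
\]
I split the error sum in the usual way. The terms with $n$ far from $x$ give $O(x(\log x)^2/Z)$, since $\sum\Lambda(n)n^{-c}\ll\log x$. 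The terms with $n$ near $x$ give $O\bigl((x/Z)\log x\log Z+\log x\bigr)$ by the usual counting of prime powers close to $x$. Replacing $\psi_0$ by $\psi$ costs at most $\log x$. All of these are absorbed in $O\bigl(\frac{x}{Z}(\log (xZ))^2\bigr)$ because $Z\le x$.

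\textbf{Contour shift.} First I adjust $Z$ by $O(1)$ so that $|Z-\gamma|\gg 1/\log Z$ for every ordinate $\gamma$. On such a horizontal line one has $\zeta'/\zeta(\sigma\pm iZ)\ll(\log Z)^2$ for $-1\le\sigma\le 2$. Moving the zeros with ordinates between the original and the adjusted $Z$ costs $O(\log Z)$ zeros, each of size $\ll\sqrt{x}/Z$, which is negligible. I then shift the line of integration to $\Re s=-U$ with $U\to\infty$ through half-odd integers. The residues picked up are:
\begin{itemize}
\item $x$ from the pole at $s=1$;
\item $-x^\rho/\rho$ for each zero with $|\gamma|<Z$, which by RH is $-x^{1/2+i\gamma}/(1/2+i\gamma)$;
\item $-\zeta'/\zeta(0)$ from $s=0$;
\item the trivial-zero terms, whose total is $-\tfrac12\log(1-x^{-2})=O(1)$.
\end{itemize}

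\textbf{Bounding the remaining integrals.} The horizontal segments give
\[
\ll (\log Z)^2\int_{-\infty}^{c}\frac{x^\sigma}{Z}\,\dx\sigma\ll \frac{x(\log Z)^2}{Z\log x},
\]
using $\zeta'/\zeta(s)\ll\log|s|$ in the left half-plane away from the trivial zeros. The left vertical integral tends to $0$ as $U\to\infty$. Collecting everything gives the stated formula, since the constants and $O(\log x)$ terms are dominated by $\frac{x}{Z}(\log(xZ))^2$ for $Z\le x$.

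\textbf{Main obstacle.} The delicate step is the bound $\zeta'/\zeta(\sigma+iZ)\ll(\log Z)^2$ on the horizontal lines. It comes from the partial-fraction expansion
\[
\frac{\zeta'}{\zeta}(s)=\sum_{|\gamma-Z|<1}\frac{1}{s-\rho}+O(\log Z),
\]
combined with the choice of $Z$ avoiding ordinates by $\gg 1/\log Z$, and the fact that there are $O(\log Z)$ such zeros. This is exactly the cited argument in Davenport, so I would quote it rather than redo it.
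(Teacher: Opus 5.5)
Your proposal is correct and is exactly the classical argument of Davenport, Ch.~17: truncated Perron formula for $-\zeta'/\zeta$, a height avoiding all ordinates by $\gg 1/\log Z$, a contour shift, and residues at $1$, at the $\rho$, at $0$ and at the trivial zeros. The paper relies on nothing more, since it simply quotes this lemma from Davenport without proof. Your specialisation to RH is right, and so is your observation that every secondary term (the $O(\log x)$ terms, the constants, and the $O(\sqrt{x}\log Z/Z)$ from readjusting the height) is absorbed into $\frac{x}{Z}(\log(xZ))^2$ because $Z\le x$.
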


\begin{lem}\cite[Ch.~19]{Davenport}.\label{davenlem}
If $\chi$ is a non principal 
character modulo $k$ and $2 \le Z  \le $ $x$, then assuming RH for $L(s,\chi)$ holds, we have, as $ x \to \infty,$ that
\begin{equation}
\label{explform-all-gammas}
\psi(x,\chi) = - \sum_{\substack{ \vert \gamma \vert  \le  Z \\ L(1/2+i\gamma)=0
}} \frac{x^{1/2 + i\gamma}}{1/2 + i\gamma} +
O\Bigl(\frac{x}{Z} (\log (kxZ))^2 \Bigr).
\end{equation}
\end{lem}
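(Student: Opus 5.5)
The plan is to follow the classical route of Davenport (Ch.~17--19): truncated Perron formula, contour shift, residue calculus. First reduce to $\chi$ primitive. If $\chi$ is induced by a primitive $\chi^*$ modulo $k^*\mid k$, then $\psi(x,\chi)-\psi(x,\chi^*)$ only involves prime powers $p^m\le x$ with $p\mid k$. This difference is $\ll \log k\log x\ll (x/Z)(\log(kxZ))^2$ since $Z\le x$. Also $L(s,\chi)$ and $L(s,\chi^*)$ have the same zeros on the critical line. So it suffices to treat $\chi$ primitive modulo $k$.

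With $c=1+1/\log x$, the truncated Perron formula gives
\[
\psi(x,\chi)=\frac{1}{2\pi i}\int_{c-iZ}^{c+iZ}\Bigl(-\frac{L'}{L}(s,\chi)\Bigr)\frac{x^s}{s}\,\dx s+O\Bigl(\frac{x(\log x)^2}{Z}+\log x\Bigr),
\]
using the standard bounds for $\sum\Lambda(n)\min(1,(Z|\log(x/n)|)^{-1})(x/n)^c$. Before shifting, I perturb $Z$ by $O(1)$ so that $|Z-\gamma|\gg 1/\log(kZ)$ for every zero. This is possible because the number of zeros with $|\gamma-Z|\le 1$ is $O(\log(kZ))$. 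The change in the sum over zeros caused by this perturbation is at most $O(\log(kZ))$ terms, each of size $\ll x^{1/2}/Z$ under RH, hence $\ll x^{1/2}\log(kZ)/Z$, which is absorbed.

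Next, move the line of integration to $\mathrm{Re}\, s=-U$ with $U\to\infty$ through half-odd-integers, avoiding trivial zeros. The residues picked up are:
\begin{itemize}
\item $-x^{\rho}/\rho$ for each nontrivial zero $\rho=1/2+i\gamma$ with $|\gamma|\le Z$ (using RH);
\item the contributions of the trivial zeros, whose total is $\ll \sum_{m\ge1}x^{-m}/m\ll 1$;
\item the residue at $s=0$.
\end{itemize}
The residue at $s=0$ is a constant $b(\chi)$ when $\chi(-1)=-1$. When $\chi$ is even it is $\log x+b(\chi)$, because of the trivial zero of $L(s,\chi)$ at $s=0$. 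By the partial-fraction expansion of $L'/L$ one has $b(\chi)\ll \log k+\sum_{|\gamma|<1}1/|\rho|$. Under RH $|\rho|\ge 1/2$, so this is $\ll\log k$. These terms are therefore all $\ll\log(kx)$ and are absorbed in the error.

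The main obstacle is bounding the horizontal segments $\mathrm{Im}\, s=\pm Z$, $-U\le\mathrm{Re}\, s\le c$.
\begin{itemize}
\item On the part $-1\le\sigma\le c$, use the standard estimate $\frac{L'}{L}(s,\chi)=\sum_{|\gamma-t|\le1}\frac{1}{s-\rho}+O(\log(k|t|))$. Together with the spacing choice of $Z$, this gives $|L'/L|\ll(\log kZ)^2$ there. This piece then contributes $\ll (\log kZ)^2 Z^{-1}\int_{-1}^{c}x^\sigma\,\dx\sigma\ll x(\log kZ)^2/(Z\log x)$.
\item For $\sigma\le-1$, the functional equation gives $L'/L\ll\log(k|s|)$ away from trivial zeros. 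The integrals over $\sigma\le -1$ of the horizontal segments are then $\ll \log(kZ)/(Zx\log x)$.
\item The left vertical line contributes $\ll x^{-U}\log(kU)\to0$ as $U\to\infty$.
\end{itemize}
Collecting all the terms gives \eqref{explform-all-gammas} with error $O\bigl((x/Z)(\log(kxZ))^2\bigr)$.
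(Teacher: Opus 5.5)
Your proposal is correct and is essentially the classical argument in Davenport (Ch.~17 and 19, with the reduction from imprimitive to primitive characters as in Ch.~20), which the paper cites without giving its own proof: truncated Perron formula, a nearby height avoiding ordinates, contour shift, and the bound $b(\chi)=O(\log k)-\sum_{|\gamma|<1}1/\rho$. Under RH this bound is $O(\log k)$, so the extra terms from a possible exceptional zero in Davenport's unconditional version disappear, and everything is absorbed because $Z\le x$; the only cosmetic point is that the Perron truncation itself should be taken at the perturbed height $Z'\in[Z,Z+1]$, which leaves all error terms unchanged.
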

\noindent
As an immediate consequence we have:
\begin{lem} \cite[Ch.~19--20]{Davenport}.
\label{davenlem3}
If $\chi$ is a non principal 
character modulo $k$, then assuming RH for $L(s,\chi)$ holds, we have, as $ x \to \infty,$ that
\begin{equation*}
\psi(x,\chi) \ll \sqrt{x} (\log (kx))^2.
\end{equation*}
\end{lem}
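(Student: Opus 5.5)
The statement is the standard one: under RH for $L(s,\chi)$ with $\chi$ non-principal modulo $k$, we want $\psi(x,\chi)\ll\sqrt{x}(\log(kx))^2$. I would derive it directly from Lemma~\ref{davenlem} by choosing the truncation height $Z=x$. This choice is admissible, since $2\le Z\le x$. The error term in \eqref{explform-all-gammas} is then
\[
O\Bigl(\frac{x}{Z}(\log(kxZ))^2\Bigr)=O\bigl((\log(kx^2))^2\bigr)=O\bigl((\log(kx))^2\bigr),
\]
which is far smaller than the target bound. So everything reduces to bounding the sum over zeros.

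For the sum over zeros, RH for $L(s,\chi)$ gives $|x^{1/2+i\gamma}|=\sqrt{x}$ and $|1/2+i\gamma|\asymp 1+|\gamma|$. Hence
\[
\Bigl|\sum_{|\gamma|\le x}\frac{x^{1/2+i\gamma}}{1/2+i\gamma}\Bigr|\le \sqrt{x}\sum_{|\gamma|\le x}\frac{1}{|1/2+i\gamma|}.
\]
I would bound the right-hand side by splitting the zeros into unit intervals $t\le|\gamma|\le t+1$ for $0\le t\le x$. By the classical local zero count (\cite[Ch.~16, Lemma]{Davenport}), each such interval contains $\ll\log(k(t+2))$ zeros. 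Summing over $t$ gives
\[
\sum_{|\gamma|\le x}\frac{1}{|1/2+i\gamma|}\ll\sum_{0\le t\le x}\frac{\log(k(t+2))}{t+1}\ll \log(kx)\,\log x\le(\log(kx))^2 .
\]
Alternatively, one can use partial summation with $N(T,\chi)=\frac{T}{\pi}\log\frac{kT}{2\pi e}+O(\log(kT))$. Combining the two pieces yields the claimed bound $\psi(x,\chi)\ll\sqrt{x}(\log(kx))^2$.

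The only point needing care is uniformity in $k$. The zero-counting bound must hold uniformly, with $\log(k(|t|+2))$ and not just $\log(t+2)$, and this includes low-lying zeros near $t=0$. The cited lemma provides this for primitive characters. If $\chi$ is imprimitive and induced by $\chi^*$ modulo $k^*\mid k$, I would first prove the bound for $\chi^*$. Then I would note that
\[
\psi(x,\chi)-\psi(x,\chi^*)\ll\sum_{p\mid k}\log x\ll(\log k)(\log x),
\]
which is absorbed in the error. No other obstacle is expected; this is the routine consequence recorded as \cite[Ch.~19--20]{Davenport}.
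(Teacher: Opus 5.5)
Your proposal is correct and follows essentially the same route as the paper, which records this lemma as an immediate consequence of Lemma~\ref{davenlem}. You take $Z=x$ there and bound the zero sum trivially via the local zero count, and your reduction from imprimitive to primitive characters is the standard one from Davenport, Ch.~20.
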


We will apply the Cauchy--Schwarz inequality ($L^2$-norm form) multiple times
in our proofs and so we state it here: 
\begin{equation}
\label{Cauchy}
\Bigl \vert  \int_a^b f(t) g(t) \, \dx t \Bigr \vert  
\le 
\Bigl( \int_a^b  \vert f(t) \vert ^2  \, \dx t \Bigr)^{1/2} 
\Bigl( \int_a^b  \vert g(t) \vert ^2 \, \dx t \Bigr)^{1/2},
\end{equation}
where $f$ and $g$ are arbitrary square-integrable complex functions.

We also record the Sobolev--Gallagher inequality (which will play an important role in proving Lemma \ref{lem3}).
\begin{lem} \cite[Lemma~1.1]{Montbook}
\label{Sob-Gal-ineq}
Let $a<b$ be real numbers and $f$ a continuous complex valued function on
$[a,b]$, with continuous first derivative $f'$ in $(a,b)$. Then
\begin{equation*}
%\label{SG}
\vert f(u) \vert  \le \frac{1}{b-a} \int_a^b  \vert f(t) \vert  \, \dx t + \int_a^b  \vert f'(t) \vert  \, \dx t
\end{equation*}
for any $u$ in $[a, b]$.
\end{lem}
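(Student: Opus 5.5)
The plan is to compare $f(u)$ with $f(t)$ for an arbitrary point $t\in[a,b]$ via the fundamental theorem of calculus, and then average over $t$. The averaging produces the mean-value term $\frac{1}{b-a}\int_a^b|f(t)|\,\dx t$. The variation between $t$ and $u$ is controlled uniformly by the total variation term $\int_a^b|f'(t)|\,\dx t$.

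\textbf{Step 1 (preliminary reductions).} If $\int_a^b|f'(t)|\,\dx t=+\infty$, the inequality is trivial, so assume this integral is finite. Fix $u\in[a,b]$ and any $t\in[a,b]$.

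\textbf{Step 2 (the comparison).} Assume first that $t,u\in(a,b)$. Since $f'$ is continuous on the closed segment between $t$ and $u$, we have $f(u)=f(t)+\int_t^u f'(s)\,\dx s$. Hence
\[
|f(u)|\le |f(t)|+\Bigl|\int_t^u |f'(s)|\,\dx s\Bigr|\le |f(t)|+\int_a^b|f'(s)|\,\dx s .
\]
If $t$ or $u$ is an endpoint, apply this bound to nearby interior points $t'\to t$, $u'\to u$ and pass to the limit. This uses the continuity of $f$ on $[a,b]$ and the fact that the right-hand integral does not depend on $t',u'$. So the displayed bound holds for all $t,u\in[a,b]$.

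\textbf{Step 3 (averaging).} Integrate the bound of Step 2 over $t\in[a,b]$ and divide by $b-a$. The left-hand side and the second term on the right are independent of $t$, so
\[
|f(u)|\le \frac{1}{b-a}\int_a^b|f(t)|\,\dx t+\int_a^b|f'(t)|\,\dx t,
\]
which is the claim.

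The only step needing any care is Step 2 at the endpoints. There $f'$ is only assumed continuous on the open interval, so $\int_a^b|f'|$ must be read as an improper integral. The limiting argument through interior points handles this, using only the continuity of $f$ on $[a,b]$. Everything else is routine.
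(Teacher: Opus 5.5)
Your argument is correct and complete. The paper gives no proof of this lemma; it is quoted from Montgomery's book, so there is no in-paper argument to compare against.

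What you give is the natural proof. You take the pointwise bound $|f(u)|\le |f(t)|+\int_a^b|f'(s)|\,\dx s$ from the fundamental theorem of calculus and average it over $t\in[a,b]$. You also handle the one real subtlety correctly: $f'$ is only continuous on $(a,b)$, and you deal with this by passing to interior points and using continuity of $f$ on $[a,b]$.

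The other classical proof, the one usually given for Gallagher's lemma in the large-sieve literature, integrates by parts against the weight $t-a$ on $[a,u]$ and $t-b$ on $[u,b]$. This gives the exact identity
\[
f(u)=\frac{1}{b-a}\int_a^b f(t)\,\dx t+\frac{1}{b-a}\Bigl(\int_a^u (t-a)f'(t)\,\dx t+\int_u^b (t-b)f'(t)\,\dx t\Bigr).
\]
Hence the coefficient of $\int_a^b|f'(t)|\,\dx t$ improves to $\max(u-a,b-u)/(b-a)\le 1$, which equals $1/2$ at the midpoint.

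Your averaging argument only yields the coefficient $1$. That is exactly what the lemma as stated asserts, and it is all the paper needs. In the proof of Lemma~\ref{lem3} the inequality is applied at the midpoint $v=0$ of $[-1,1]$, and only inside $\ll$-estimates, so the constant plays no role.
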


Finally, we state
an analogue of Lemma 6 of Goldston--Montgomery
\cite{GM1984}. It will be crucially used in the proof of Lemma \ref{GM-lemma}.
\begin{lem}\cite[Lemma~6]{GM1984}
\label{gmslem}
Let $\mathcal{S}(t) \coloneq \sum_{\mu \in \mathcal{M}} c(\mu) e^{2\pi i\mu t}$
be a Fourier series with $\mathcal{M}$ be a countable set of real numbers and with 
$c(\mu)$ real Fourier coefficients. 
If $\sum_{\mu \in \mathcal{M}}  \vert c(\mu) \vert  < \infty$,
then uniformly for $T\ge 1$ and $1/T\le \delta \le 1$, we have

\[
\int_{0}^{T}  \vert \mathcal{S}(t) \vert ^2 \, \dx t = 
T \sum_{\mu \in \mathcal{M}}  \vert c(\mu) \vert ^2 + 
O\Big( \delta^{-1} \sum_{\mu \in \mathcal{M}}  \vert c(\mu) \vert ^2 
+ 
T\!\! \sum_{\substack{\mu, \nu \in \mathcal{M} \\
0 <  \vert \mu-\nu \vert < \delta}}  \vert c(\mu) c(\nu) \vert \Big).
\]
\end{lem}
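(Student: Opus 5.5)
The plan is to compare the sharp cutoff $\mathbf 1_{[0,T]}$ with smooth weights whose Fourier transforms are supported in $[-\delta,\delta]$. This kills every off-diagonal pair $(\mu,\nu)$ with $\vert\mu-\nu\vert\ge\delta$, rather than trying to sum the bounds $\min(T,\vert\mu-\nu\vert^{-1})$ that a direct expansion of $\vert\mathcal S(t)\vert^2$ over $[0,T]$ would produce. The second approach does not obviously give an error of size $\delta^{-1}\sum\vert c(\mu)\vert^2$, since there is no spacing assumption on $\mathcal M$. The natural tools are the Beurling--Selberg extremal functions.

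\textbf{Step 1 (extremal functions).} For the interval $[0,T]$ and parameter $\delta$, I take the Selberg majorant $w_+$ and minorant $w_-$. These are real, integrable functions with
\[
w_-(t)\le \mathbf 1_{[0,T]}(t)\le w_+(t) \quad\text{for all real } t,
\]
with Fourier transforms $\widehat{w_\pm}(\xi)=\int w_\pm(t)e^{-2\pi i\xi t}\,\dx t$ supported in $\vert\xi\vert\le\delta$, and with $\int w_\pm=T\pm\delta^{-1}$. It also follows that $\vert\widehat{w_\pm}(\xi)\vert\le\int\vert w_\pm\vert\le T+3\delta^{-1}$; the last inequality uses that $w_-$ differs from $\mathbf 1_{[0,T]}$ by a function of $L^1$-norm at most $\delta^{-1}$. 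Since $\delta\ge1/T$, all of these quantities are $O(T)$.

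\textbf{Step 2 (expansion).} Because $\sum\vert c(\mu)\vert<\infty$, the series for $\mathcal S$ converges absolutely and uniformly. Hence $\vert\mathcal S(t)\vert^2=\sum_{\mu,\nu}c(\mu)c(\nu)e^{2\pi i(\mu-\nu)t}$ (the coefficients are real) is bounded, and we may integrate it against the integrable $w_\pm$ term by term. This gives
\[
\int_{\mathbb R} w_\pm(t)\vert\mathcal S(t)\vert^2\,\dx t=\sum_{\mu,\nu}c(\mu)c(\nu)\,\widehat{w_\pm}(\nu-\mu).
\]
By the support condition, only pairs with $\vert\mu-\nu\vert\le\delta$ survive. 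The diagonal contributes $(T\pm\delta^{-1})\sum\vert c(\mu)\vert^2$. The remaining pairs contribute at most $O\bigl(T\sum_{0<\vert\mu-\nu\vert\le\delta}\vert c(\mu)c(\nu)\vert\bigr)$, by Step 1. The boundary case $\vert\mu-\nu\vert=\delta$ is handled by applying the construction with $\delta/2$ in place of $\delta$, which only changes the constants.

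\textbf{Step 3 (sandwich).} Since $\vert\mathcal S\vert^2\ge0$, we have
\[
\int w_-\vert\mathcal S\vert^2\le\int_0^T\vert\mathcal S\vert^2\le\int w_+\vert\mathcal S\vert^2.
\]
Combining this with Step 2 yields the stated asymptotic formula, with error $O\bigl(\delta^{-1}\sum\vert c(\mu)\vert^2+T\sum_{0<\vert\mu-\nu\vert<\delta}\vert c(\mu)c(\nu)\vert\bigr)$.

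\textbf{Main obstacle.} The delicate point is Step 1: we need the explicit properties of the Beurling--Selberg functions, namely the one-sided inequalities, the Fourier support in $[-\delta,\delta]$, and the exact values $\int w_\pm=T\pm\delta^{-1}$. I would quote these from Vaaler's or Montgomery's exposition rather than reprove them. If one prefers to avoid them, a cruder substitute is a Fejér-type kernel convolved with $\mathbf 1_{[0,T]}$, but that only gives an upper bound. The lower bound genuinely needs a minorant with compactly supported Fourier transform.
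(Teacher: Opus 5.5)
Your argument is correct and complete, apart from quoting the standard properties of Selberg's majorant and minorant for $[0,T]$. The paper gives no proof of this lemma and simply cites Goldston--Montgomery; your sandwich argument is the standard proof of it, and the exact error $\delta^{-1}\sum\vert c(\mu)\vert^2$ in the statement is what Selberg's functions produce. One small simplification: since $w_\pm\in L^1$, $\widehat{w_\pm}$ is continuous and so already vanishes at $\xi=\pm\delta$, which makes the passage to $\delta/2$ unnecessary.
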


\section{Proofs of Theorems \ref{pc1chi} and \ref{lam}}
First of all we remark that if
 $\chi'_{_{\square}} \pmod{k_1}$ is the primitive character that induces $\chi_{_{\square}}
\pmod{k}$ then $F^+_{\chi'_{_\square}}(x,T) = F^+_{\chi_{_\square}}(x,T) $ and $k_1 | k$.
Therefore, it is sufficient to prove Theorem \ref{pc1chi} for primitive quadratic characters only.

Under RH for the Dirichlet $L$-function associated with the quadratic character
modulo $k$, following the argument in Landau \cite[p.~353]{LandauHand} we obtain for $k\ge 3$,
$(a,k)=1$, $x > 1$, and for primitive quadratic characters $\chi_{_{\square}}$, the identity
\[
\sideset{}{'} \sum_{n\le  x} \frac{\Lambda(n)\chi_{_{\square}}(n)}{n^s}
=
\sum_{\ell=0}^{\infty}\frac{x^{-2\ell-s-\mathfrak{a}}}{2\ell+s+\mathfrak{a}}
-
\sum_{\rho}\frac{x^{\rho-s}}{\rho-s}
-
\frac{L^\prime}{L}(s,\chi_{_{\square}}),
\]
where the dash over the summation symbol indicates that only half of
the term with 
$n = x$ is to be included in the sum. As is customary,
the sum over the non-trivial zeroes has to be interpreted in 
the symmetrical 
sense as $ \lim_{Z \to \infty} \sum_{ \vert \gamma \vert  < Z}$,
$s\in \mathbb C$,
%$s\ne 1$,
$s\ne \rho$, $s\ne -(2\ell+\mathfrak{a})$,
$\mathfrak{a} =0$ if $\chi_{_{\square}}(-1)=1$, and
$\mathfrak{a} =1$ if $\chi_{_{\square}}(-1)=-1$.
Letting $s=\sigma+it$ and $\rho=1/2+i\gamma$, 
and following the proof of Montgomery \cite[pp.~185--186]{Montgomery1973} (see also, \cite[\S 3]{mathannalen}),
the previous formula can be rewritten as
\begin{equation}
\label{switchLR}
\sum_{\rho}
\frac{x^{1/2-\sigma}x^{i(\gamma-t)}}{\sigma-1/2 +i(t-\gamma)}
=
%\Bigl(
 \frac{L^\prime}{L}(s,\chi_{_{\square}})
 +
\sideset{}{'} \sum_{n\le  x} \frac{\Lambda(n)\chi_{_{\square}}(n)}{n^s}
-
\sum_{\ell=0}^{\infty}\frac{x^{-2\ell-s-\mathfrak{a}}}{2\ell+s+\mathfrak{a}}
%\Bigr)
.
\end{equation}
On replacing $s$ by $1-\sigma+it$, we obtain
\begin{align}
\notag
%\label{reflection}
\sum_{\rho}\frac{x^{\sigma-1/2}x^{i(\gamma-t)}}{1/2-\sigma +i(t-\gamma)}
=
\frac{L^\prime}{L}(1-\sigma+it,\chi_{_{\square}})
& +
\sideset{}{'} \sum_{n\le  x} \frac{\Lambda(n)\chi_{_{\square}}(n)}{n^{1-\sigma+it}}
\\& 
\label{reflection}
-
\sum_{\ell=0}^{\infty}\frac{x^{-2\ell-1 + \sigma -it-\mathfrak{a}}}{2\ell+1 - \sigma +it+\mathfrak{a}}
.
\end{align}
Subtracting the respective sides of \eqref{reflection} from \eqref{switchLR} 
and using the relation 
\begin{equation}
\label{derlog-L}
 \frac{L^\prime}{L}(s,\chi_{_{\square}})
 =
 - \sum_{n\ge 1} \frac{\Lambda(n)\chi_{_{\square}}(n)}{n^s}
 \quad (\Re(s)>1),
\end{equation}
 we obtain 
\begin{align}
\notag
\sum_{\gamma}
\frac{(2\sigma-1) x^{i\gamma}}{(\sigma -1/2)^{2}+(t-\gamma)^{2}}
=
& -\frac{1}{\sqrt{x}} 
\Bigl(
\sideset{}{'} \sum_{n\le x} 
\Lambda(n)\chi_{_{\square}}(n) \Bigl( \frac{x}{n} \Bigr)^{1-\sigma+it} 
\\&
\notag
+
\sum_{n > x} \Lambda(n)\chi_{_{\square}}(n) \Bigl( \frac{x}{n} \Bigr)^{\sigma+it} 
\Bigr)
 \\
&
%\hskip2.5cm
-
\frac{L^\prime}{L}(1-\sigma+it, \chi_{_{\square}})
x^{1/2-\sigma+it} \nonumber \\
\label{Montgomery-eq-(22)}
&
-\frac{1}{\sqrt{x}} 
\sum_{\ell=0}^{\infty}
\frac{(2\sigma-1) x^{-2\ell-\mathfrak{a}}}{(\sigma-1-it-2\ell-\mathfrak{a} ) (\sigma+it+2\ell+\mathfrak{a})}.
\end{align}
For primitive $\chi_{_{\square}} \pmod{k}$
we obtain, by logarithmic differentiation of both sides appearing in the functional equation of 
$L(s,\chi_{_{\square}})$, the estimate

\begin{equation*}
%\label{derlog-reflection}
- \frac{L^\prime}{L}(1-\sigma+it, \chi_{_{\square}})
=
\log \Bigl(\frac{k\tau}{2\pi} \Bigr)
+ \frac{L^\prime}{L}(\sigma-it, \chi_{_{\square}})
+O_\sigma (1),
\end{equation*}
with \( \tau =  \vert t \vert +2\), 
which,  using \eqref{derlog-L}, for $\sigma>1$ becomes
 \begin{equation}
\label{derlog-reflection}
- \frac{L^\prime}{L}(1-\sigma+it, \chi_{_{\square}})
=
\log (k\tau)
+O_\sigma (1).
\end{equation}
Letting $\sigma=3/2$, inserting \eqref{derlog-reflection} into
\eqref{Montgomery-eq-(22)}, we have
\begin{align}\label{eqn8}
\Bigl \vert 
\sum_{\substack
{\gamma
%\colon L(\frac{1}{2} + i\gamma, \chi_{_{\square}}) = 0
}} \frac{2x^{i\gamma}}{1+(t-\gamma)^2} \Bigr \vert ^2
= \Bigl \vert \sum_{1\le  j \le  3} R_j(x,t)\Bigr \vert ^2,
\end{align}
where 
\begin{align*}
R_1(x,t) & \coloneq 
- \frac{1}{\sqrt{x}} 
   \Bigl( 
   \sideset{}{'}\sum_{\substack{n \le x }} \Lambda(n) \chi_{_{\square}}(n) \Bigl(\frac{x}{n}\Bigr)^{-1/2+it}
   + 
   \sum_{\substack{n > x }}\Lambda(n) \chi_{_{\square}}(n)
   \Bigl(\frac{x}{n}\Bigr)^{3/2+ it}
   \Bigr),\\ 
 R_2(x,t) &\coloneq x^{-1+it} 
  ( \log (k \tau) +O(1)), 
\\
R_3(x,t) & \coloneq O( x^{-1/2}\tau^{-1}).
\end{align*} 

The precise definition of $R_2$ and $R_3$ is not relevant for our purposes, the estimates above will suffice. 

\begin{proof}[Proof of Theorem \ref{pc1chi}]

We integrate both sides of equation \eqref{eqn8} from $t=0$ to $t=T$,
where $T$ will be specified later.
The left hand side of equation \eqref{eqn8} can be written as 
\begin{equation*}
    \sum_{\substack{\gamma_j, \, j=1,2 
    %\\ L(\frac{1}{2} + i\gamma_j, \chi_{_{\square}}) = 0
    }}
   \frac{4 x^{i(\gamma_1 - \gamma_2)}}{(1+(t -\gamma_1)^2) (1+(t-\gamma_2)^2)},
\end{equation*}
Now we integrate 
this function from $t=0$ to $t=T.$ We claim that
\begin{align}
\notag
    \int_{0}^T
    \sum_{\substack{ \gamma_j, \, j=1,2
    }}&
   \frac{4 x^{i(\gamma_1 - \gamma_2)}}{(1+(t -\gamma_1)^2) (1+(t-\gamma_2)^2)} \, \dx t 
   \\
\notag
    & =
    \int_{- \infty}^{\infty} 
    \sum_{\substack{  0 \le \gamma_j  \le T \\ j=1,2
     }}
   \frac{4 x^{i(\gamma_1 - \gamma_2)}}{(1+(t -\gamma_1)^2) (1+(t-\gamma_2)^2)} \, \dx t 
   + O\bigl( (\log T)(\log (kT))^2\bigr)
   \\
     \label{sona}
     &=   2\pi 
     F^+_{\chi_{_\square}}(x,T) + O\bigl( (\log T)(\log (kT))^2\bigr).
   \end{align}
To prove equation \eqref{sona}, we first recall that for $\chi_{_{\square}} \pmod{k}$, there are $\ll \log (kT)$ 
zeros such that $L(1/2 + i\gamma, \chi_{_{\square}}) = 0$ and $T \le \gamma \le T+1$, $T  \ge  2$. 
This implies for $ 0 \le t  \le T$ that
\begin{equation}
\label{eqn33}
    \sum_{\substack{ \gamma   >T 
    %\\ L(\frac{1}{2} + i\gamma, \chi_{_{\square}}) = 0
    }}
   \frac{1}{1+(t - \gamma)^2 }
   \ll 
    \frac{\log (kT)}{T-t+1}.
    \end{equation}

We now recall \cite[Ch.~16, Lemma]{Davenport}, i.e., 

\begin{equation}
\label{eqn34}
    \sum_{\substack{\gamma 
    %\\ L(\frac{1}{2} + i\gamma, \chi_{_{\square}}) = 0
    }}
   \frac{1}{1+(t - \gamma)^2 }
   \ll \log (k \tau).
    \end{equation}
Using \eqref{eqn33}-\eqref{eqn34} we obtain  that
\begin{equation}
    \int_{ 0}^T
   % &
    \sum_{\substack{\gamma_1, \gamma_2 \\  \gamma_2   >T 
    %\\ L(\frac{1}{2} + i\gamma_j, \chi_{_{\square}}) = 0
    }}
   \frac{4 x^{i(\gamma_1 - \gamma_2)}}{(1+(t -\gamma_1)^2) (1+(t-\gamma_2)^2)} \, \dx t
   %\\&
   \label{eqn36}
   \ll  (\log T)(\log (kT))^2.
\end{equation}
For $ t  > T$, we have 
\begin{align*}
\sum_{ 0 \le \gamma \le T}
\frac{1}{1+(t-\gamma)^{2}}
&\ll
\frac{\log (k t)}{ t - T + 1},
\end{align*}
so that
\begin{align}
    \int_{  t > T}  \sum_{\substack{ 0 \le  \gamma_j  \le T \\ j=1,2 
    %\\ L(\frac{1}{2} + i\gamma_j, \chi_{_{\square}}) = 0
    }}
   \frac{\, \dx t}{(1+(t -\gamma_1)^2) (1+(t-\gamma_2)^2)}  \nonumber 
   & \ll 
   \int_{  t  > T}  
   \frac{\log^2 (k t) }{(t  - T + 1 )^2} \, \dx t
   %\\ \nonumber &
  \ll (\log (kT))^2. 
\end{align}
This implies  that
\begin{equation}
%\notag
    \int_{  t > T}  
   % &
    \sum_{\substack{ 0 \le \gamma_j   \le T \\ j=1,2 %\\ L(\frac{1}{2} + i\gamma_j, \chi_{_{\square}}) = 0
    }}
   \frac{4 x^{i(\gamma_1 - \gamma_2)}}{(1+(t -\gamma_1)^2) (1+(t-\gamma_2)^2)} \, \dx t 
   %\\&
   \label{eqn35}
   \ll  (\log (kT))^2.
\end{equation}
Now \eqref{sona} follows on 
combining equations \eqref{eqn36}-\eqref{eqn35}
and applying Cauchy's 
residue theorem to evaluate the second integral in \eqref{sona}.
From \eqref{eqn8}-\eqref{sona} we obtain
   \begin{align}\label{intlhsrhs}
       \int_{0}^T \big \vert \sum_{1\le  j \le  3} & R_j(x,t)\big \vert ^2 \, \dx t 
  =  2 \pi F^+_{\chi_{_\square}}(x,T) + O\bigl( (\log T)(\log (kT))^2\bigr).
   \end{align}
Using the Cauchy--Schwarz inequality \eqref{Cauchy}, we can easily deduce that
\begin{align}\label{eqn39}
  \int_{0}^T \big \vert \sum_{1\le  j \le  3} & R_j(x,t)\big \vert ^2 \, \dx t 
  =   \sum_{1\le  j\le  3} \int_{0}^T \big \vert  R_j(x,t)\big \vert ^2 \, \dx t   \nonumber \\
  & + O\Bigl(\sum_{1\le  j \le  3} \sum_{\substack{1\le  \ell \le  3\\ \ell\ne j}} 
  \bigl(\int_{0}^T \big \vert  R_j(x,t)\big \vert ^2 \, \dx t\bigr)^{1/2} 
  \bigl(\int_{0}^T \big \vert  R_\ell(x,t)\big \vert ^2 \, \dx t\bigr)^{1/2}
  \Bigr).
\end{align}

Integrating   $\vert R_j(x,t) \vert^2, j = 2,3$ (see \eqref{eqn8} for 
their definitions) from $t=0$ to $t=T$, we 
obtain, for $k \ge  3$, that

\begin{equation}\label{eqn411}
    \int_{0}^T  \vert R_2(x,t) \vert ^2 \, \dx t \ll
    \frac{ T }{x^2} (\log (kT))^2,
\end{equation}
and 
\begin{equation}\label{eqn40}
   \int_{0}^T  \vert R_3(x,t) \vert ^2 \, \dx t  \ll 1.
\end{equation}

The mean square of  $R_1(x,t)$ is evaluated in the following lemma.
\begin{lem}
\label{GM-lemma}
For $ x \le T$ we have, as $x \to \infty$,
\begin{align}
\label{sx11}
   \int_{0}^T  \vert R_1(x,t) \vert ^2 \, \dx t  &=
  T S(x) + O\bigl(\sqrt{\, TxS(x)}\bigr),
   \end{align}
where
\begin{equation}
\label{Sdef}
 S(x) \coloneq \frac{1}{x^2}
\sum_{\substack{n \le x \\ (n,k)=1}} n \Lambda(n)^2 +
x^2 \sum_{\substack{n > x \\ (n,k)=1 }}\frac{\Lambda(n)^2}{n^3}.
\end{equation}
\end{lem}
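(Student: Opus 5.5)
We treat $R_1(x,t)$ as a generalized Dirichlet polynomial in $t$ and apply Lemma~\ref{gmslem}, as Montgomery does. Writing $n^{-it}=e^{2\pi i \mu t}$ with $\mu=-\log n/(2\pi)$, we have
\[
R_1(x,t) = -x^{it}\sum_{n\ge 1} a(n)\, n^{-it},
\qquad
a(n)=\frac{\Lambda(n)\chi_{_\square}(n)}{\sqrt{x}}\cdot
\begin{cases}
(x/n)^{-1/2}=\sqrt{n/x}, & n\le x \ (\text{halved if } n=x),\\
(x/n)^{3/2}, & n>x.
\end{cases}
\]
The unimodular factor $x^{it}$ does not affect $|R_1|^2$. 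The coefficients are real, since $\chi_{_\square}$ is real. They are also absolutely summable, because for $n>x$ they decay like $x\,\Lambda(n)n^{-3/2}$. So Lemma~\ref{gmslem} applies, with $\mathcal{M}=\{-\log n/(2\pi)\}$ and $c(\mu)=a(n)$.

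Squaring the coefficients gives $a(n)^2 = n\Lambda(n)^2/x^2$ for $n\le x$ and $a(n)^2=x^2\Lambda(n)^2/n^3$ for $n>x$. Because $\chi_{_\square}(n)^2=1$ exactly when $(n,k)=1$, we get $\sum_n a(n)^2 = S(x)$ as in \eqref{Sdef}. The lemma therefore yields
\[
\int_0^T |R_1|^2\,\dx t = T S(x) + O\Bigl(\delta^{-1}S(x) + T\sum_{0<|\log m-\log n|<2\pi\delta}|a(m)a(n)|\Bigr),
\]
with $\delta$ still to be chosen.

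The main step is bounding the off-diagonal sum. For $0<|\log(m/n)|<2\pi\delta$ we have $|m-n|\ll \delta n$. We use $|a(m)a(n)|\le \tfrac12(a(m)^2+a(n)^2)$ and, by symmetry, sum over $m$ for fixed $n$. This gives
\[
\sum_{0<|\log m-\log n|<2\pi\delta}|a(m)a(n)| \ll \sum_n a(n)^2\cdot \#\{m: 0<|m-n|\ll\delta n,\ \Lambda(m)\neq 0\}.
\]
This is not sharp enough as it stands, because $\Lambda$ carries a $\log$ weight. It is better to bound the count of prime powers in short intervals with Brun--Titchmarsh (Classical Theorem~\ref{BT-thm} with $k=1$) when $\delta n\ge 2$. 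When $\delta n<1$ there are no such $m$ at all. With weights, $\sum_{|m-n|\le \delta n}\Lambda(m)\ll \delta n$ for $\delta n\ge 2$.

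We then bound $|a(m)a(n)|$ directly. For $n\le x$ we have $a(n)\ll \Lambda(n)\sqrt{n/x}$, so the off-diagonal sum is at most
\[
\sum_{n \ge 1/\delta}\Lambda(n)\min\Bigl(\sqrt{\tfrac nx},\bigl(\tfrac xn\bigr)^{3/2}\Bigr)^2\cdot \delta n\cdot x^{-1}\cdot x
\ll \delta\Bigl(\sum_{n\le x}\frac{\Lambda(n) n^2}{x^2}+\sum_{n>x}\frac{\Lambda(n)x^2}{n^2}\Bigr)\ll \delta x.
\]
The powers of $x$ and $n$ here must be tracked carefully. The goal is an off-diagonal total of $O(\delta x)$, while $S(x)\asymp \log x$ by the prime number theorem.

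The error is then $O(\delta^{-1}S(x)+T\delta x)$. We optimize with $\delta=\sqrt{S(x)/(Tx)}$, which lies in $[1/T,1]$ since $x\le T$ and $S(x)\ll \log x\le x$. This gives $O(\sqrt{TxS(x)})$, which is \eqref{sx11}.

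We expect the hardest part to be getting the off-diagonal bound cleanly as $O(\delta x)$, without extra logarithms, across both ranges $n\le x$ and $n>x$. Prime powers are a negligible contribution and can be absorbed.
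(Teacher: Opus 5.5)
Your route is the same as the paper's. You write $-R_1(x,t)=\sum_n c(n)(x/n)^{it}$ with real, absolutely summable coefficients, apply Lemma~\ref{gmslem}, identify $\sum_n c(n)^2=S(x)$, show the off-diagonal term is $O(T\delta x)$, and take $\delta=\sqrt{S(x)/(Tx)}$. The gap is in the step you flagged yourself.

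\textbf{The pointwise bound is false.} You claim Brun--Titchmarsh gives $\sum_{0<|m-n|\le\delta n}\Lambda(m)\ll\delta n$ once $\delta n\ge 2$. Classical Theorem~\ref{BT-thm} with $k=1$ only gives $\sum_{n<m\le n+y}\Lambda(m)\ll y\log n/\log y$. That is $\ll y$ only when $y\ge n^{c}$ for some fixed $c>0$; a single prime in $(n,n+2]$ already contributes $\log n$. In your setting the intervals are very short. We have $c(n)^2=\Lambda(n)^2 n/x^2$ for $n\le x$ and $\Lambda(n)^2x^2/n^3$ for $n>x$, and only $n\gg 1/\delta$ have any neighbour $m$. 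So the off-diagonal mass sits at $n\asymp\max(x,1/\delta)$. There $\delta n\ll\max(\delta x,1)$, and $\delta x=\sqrt{S(x)x/T}\ll\sqrt{\log x}$ because $T\ge x$. Any pointwise short-interval bound therefore loses a factor of order $\log n/\log(2+\delta n)$, which is at least about $\log x/\log\log x$ at the critical $n$. For $T$ near $x$, optimizing $\delta$ then no longer gives the error $O(\sqrt{TxS(x)})$.

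There is also a bookkeeping slip in your displayed chain. $\sum_{n\le x}\Lambda(n)\min(\sqrt{n/x},(x/n)^{3/2})^2\,\delta n$ is $\asymp\delta x^2$. The correct weight is $c(n)^2/\Lambda(n)^2$, which includes an extra factor $x^{-1}$ and gives your next line.

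\textbf{What is missing} is an estimate averaged over $n$ that keeps the weight $\Lambda(n)$, i.e.\ an upper bound for prime pairs. For $1\le h\le N$,
\[
\sum_{N<n\le 2N}\Lambda(n)\sum_{0<|m-n|\le h}\Lambda(m)=\sum_{0<|d|\le h}\ \sum_{N<n\le 2N}\Lambda(n)\Lambda(n+d)\ll hN .
\]
This follows from the upper bound sieve $\sum_{n\le N}\Lambda(n)\Lambda(n+d)\ll\mathfrak{S}(d)N$ together with $\sum_{d\le h}\mathfrak{S}(d)\ll h$; prime powers are negligible. Sum this over dyadic blocks with weight $n/x^2$, resp.\ $x^2/n^3$, keeping only blocks with $\delta N\gg 1$. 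That gives the off-diagonal bound $O(\delta x)$, and the rest of your argument goes through.

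This is exactly how the paper uses it. \eqref{E1est} bounds $\frac{T}{x}\sum_{n\le x}\Lambda(n)\bigl(\psi(n+10^4\delta x)-\psi(n)\bigr)\ll T\delta x$. \eqref{E2est} reduces $n>x$ to the same estimate on blocks $[2^rx,2^{r+1}x]$ with the saving $2^{-3r}$. The paper cites the prime number theorem there, but since $10^4\delta x\ll\sqrt{\log x}$, that step really needs this sieve bound.

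Without it you can still get $\int_0^T|R_1(x,t)|^2\,\dx t\ll T\log x$, either from pointwise Brun--Titchmarsh or from the Montgomery--Vaughan mean value theorem, which gives $TS(x)+O(x\log x)$. That suffices for Theorem~\ref{pc1chi}. It does not give the stated error term, which Theorem~\ref{lam} needs for $T$ near $x$.
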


\begin{proof}
Recalling the definition of $R_1(x,t)$ given in \eqref{eqn8}, we can write
\begin{equation}
\label{eqn388ll}
  - R_1(x,t) = \sum_{n  \ge  1} c(n) \Bigl(\frac{x}{n}\Bigr)^{it}, 
   \end{equation}
where 
$c(n) \coloneq	 \Lambda(n) \chi_{_{\square}}(n) n^{1/2}x^{-1}$, if  $n\le x$
and $c(n) \coloneq \Lambda(n) \chi_{_{\square}}(n) n^{-3/2} x$, if $ n> x$.
Using the prime number theorem it is clear that $\sum_{n} |c(n)|^2 < \infty.$
Applying Lemma \ref{gmslem} to the series in \eqref{eqn388ll}, 
we obtain 
\begin{align}\label{r1ts}
     \int_{0}^T  \vert R_1(x,t) \vert ^2 \, \dx t = T S(x) +
  O\bigl(\delta^{-1} S(x) + E_1 + E_2\bigr),
\end{align}
where
\[
E_1 \coloneq T \sum_{ \substack{n < x \\ (n,k)=1}} \Big( \sum_{\substack{ m \le x
\\ (m,k)=1 \\ 0 <  \vert \log(n/m) \vert <2\pi \delta}}\!\!\!\!\! \Lambda(n) \Lambda(m) \frac{(nm)^{1/2}}{x^2}+
\!\!\!
\sum_{\substack{ m > x \\ (m,k)=1 \\ 0 <  \vert \log(n/m) \vert <2\pi \delta}}\!\!\!\!\! \Lambda(n) \Lambda(m) \frac{n^{1/2}}{m^{3/2}}
\Big),
\]
and

\[
E_2 \coloneq T
\sum_{\substack{n  \ge  x \\ (n,k)=1}}
\Big( \sum_{\substack{ m \le x \\ (m,k)=1 \\ 0 <  \vert \log(n/m) \vert <2\pi \delta}} 
\!\!\!\!\!\!\!\!\!\!\Lambda(n) \Lambda(m) \frac{m^{1/2}}{n^{3/2}} 
+
\!\!\!\!\!\!\!\!\!\!
\sum_{\substack{m > x \\ (m,k)=1  \\ 
0 <  \vert \log(n/m) \vert <2\pi \delta}} 
\!\!\!\!\!\!\!\!\!\Lambda(n) \Lambda(m) \frac{x^2}{(nm)^{3/2}}
\Big).
\]

We obtain, for all $k \ge  3$
\begin{equation}
\label{pntsx}
   S(x) \ll \log x,
\end{equation}
as $x \to \infty.$
Choosing 
\begin{equation}
\label{delta-def}
\delta = \sqrt{\frac{S(x)}{Tx}},
\end{equation}
for sufficiently large $x$, we have that $1/T \le  \delta \le 1,$
when $ x \le T$.
Since $ e^{2\pi \delta} - 1 \le  10^4 \delta$ for any $\delta \le 1$, we obtain

\begin{align}\label{E1est}
\nonumber
   E_1 &\ll
   \frac{T}{x} \sum_{ \substack{n \le  x}} \sum_{\substack{ m  \ge  1
\\ 0 <  \vert \log(n/m) \vert <2\pi \delta}}\!\!\!\!\! \Lambda(n) \Lambda(m)
%\\&\nonumber
    \ll 
    \frac{T}{x}
\sum_{\substack{n \le 
% 10^4
x } } \Lambda(n) 
\sum_{ \substack{n < m \le n+ 10^4 \delta x}}  \Lambda(m) 
   \\
   &\ll \frac{T}{x}
\sum_{\substack{n  \le 
%10^4
x } } \Lambda(n) (\psi(n + 10^4 \delta x) -  \psi(n) )
   \ll  T \delta x,
\end{align}
for sufficiently large $x$, where we have used the prime number theorem in the final step.  
Moreover, we can write
\begin{align}\label{E2est}
   E_2 &= T
\sum_{ r = 0}^{\infty} \sum_{ \substack{n= 2^r x}}^{2^{r+1}x}
\Big(\sum_{\substack{ m \le x \\ 0 <  \vert \log(n/m) \vert <2\pi \delta}} 
\!\!\!\!\!\!\!\!\!\!\Lambda(n) \Lambda(m) \frac{m^{1/2}}{n^{3/2}} 
+
\!\!\!\!\!\!
\sum_{\substack{m > x \\
0 <  \vert \log(n/m) \vert <2\pi \delta}} 
\!\!\!\!\!\!\!\!\!\Lambda(n) \Lambda(m) \frac{x^2}{(nm)^{3/2}}
\Big) \nonumber\\ 
& \ll 
   \frac{T}{x} \sum_{r=0}^{\infty} 
    \frac{1}{2^{3r}} \sum_{ \substack{n= 2^r x}}^{2^{r+1}x}
\sum_{\substack{ m  \ge  1 
\\ 0 <  \vert \log(n/m) \vert <2\pi \delta}} \Lambda(n) \Lambda(m)
\nonumber\\ 
& \ll 
   \frac{T}{x} \sum_{r=0}^{\infty} 
    \frac{1}{2^{3r}} 
    \sum_{\substack{n \le  2^{r+1} x}}
\sum_{\substack{ m  \ge  1 
\\ 0 <  \vert \log(n/m) \vert <2\pi \delta}} \Lambda(n) \Lambda(m)
\nonumber\\ 
& \ll  \sum_{r=0}^{\infty}  \frac{T \delta x}{2^{r}} 
   \ll T \delta x,
\end{align}
as $x \to \infty,$
 where the final single sum  is obtained on using \eqref{E1est}.
Combining  \eqref{r1ts} and \eqref{E1est}-\eqref{E2est}, and substituting the value of $\delta$ 
given in \eqref{delta-def}, we have completed the proof of Lemma \ref{GM-lemma}.
\end{proof}

{}From \eqref{sx11} and \eqref{pntsx} we deduce that
\begin{align}\label{eqn3899}
\int_{0}^T  \vert R_1(x,t) \vert ^2 \, \dx t 
  \ll  T \log x + \sqrt{\, Tx\log x},
\end{align}
as $x \to \infty$.

Combining \eqref{eqn39}-\eqref{eqn40} and \eqref{eqn3899} it follows that

\begin{align}\label{eqn42new}
  \int_{0}^T \big \vert \sum_{1\le  j \le  3} R_j(x,t)\big \vert ^2 \, \dx t & \ll
 T \log x + \sqrt{\,Tx\log x} +
  \frac{T}{x^2} (\log (kT))^2
  \ll  T \log (kx),
 \end{align}
\noindent
uniformly in the range $3\le k \le e^x$ and
$x \le T \le e^x$ as $x \to \infty.$
Recalling the estimate \eqref{intlhsrhs} and exploiting the monotonicity of $(\log T)^3/ T$  for $T$ sufficiently large, we also have
$ (\log T)(\log (kT))^2 \ll T \log (kx)$
in the restricted range $3 \leq k \le \exp(x (\log x)^{-1})$ and $x \le T \le e^x$,
as $x \to \infty$.
Summarizing, we have shown that \eqref{intlhsrhs} and \eqref{eqn42new} allow one to show that the estimate
\[
   F^+_{\chi_{_\square}}(x,T)  \ll  T  \log (kx),
\]
holds uniformly in the range $3 \leq k \le \exp(x (\log x)^{-1})$
and $x \le T \le e^x$  as $x \to \infty.$
\end{proof}

\begin{proof}[Proof of Theorem \ref{lam}]
 
For $(a,k)= 1$, let 
\[
S(x;k,a) \coloneq \frac{1}{x^2}
 \sum_{\substack{n \leq x \\
 n \equiv a \pmod{k}}} n \Lambda(n)^2 +
x^2 \sum_{\substack{n > x \\ \nonumber
 n \equiv a \pmod{k}}}\frac{\Lambda(n)^2}{n^3}.
\]
In \cite[eqn (35)]{mathannalen}, under the assumption of GRH we proved that
\[
S(x;k,a) = \frac{\log x}{\varphi(k)} + O \Big(\frac{(\log x)^3} {\sqrt{x}}  \Big),
\]
as $x \to \infty$, uniformly in $1\le k \leq \sqrt{x}/(\log x)^{3}$.
Hence, summing over residue classes, we obtain
that $S(x)$ defined in \eqref{Sdef} satisfies the asymptotic
\begin{equation}\label{sx}
    S(x)  = \log x + O\Bigl(\varphi(k) \frac{(\log x)^3}{\sqrt{x}}\Bigr)
    = (\log x)(1+o(1)),
\end{equation}
uniformly for $1\le k \le \sqrt{x}/(\log x)^3$.
From Lemma \ref{GM-lemma}, see \eqref{sx11}, and \eqref{sx} it follows that
\begin{align}\label{eqn42neww}
  \int_{0}^T \big \vert \sum_{1\le  j \le  3} R_j(x,t)\big \vert ^2 \, \dx t & =
T (\log x)(1+o(1)) + O\Bigr(\sqrt{\,Tx\log x} +
  \frac{T}{x^2} (\log (kT))^2\Bigl)
  \nonumber\\ &
  = T (\log x)(1+o(1)),
 \end{align}
 as $x \to \infty,$
uniformly for
 $ x \leq T \leq e^x$ and $3 \leq k \leq \sqrt{x}/(\log x)^{3}$.
 In this range, we also have
   $ (\log T)(\log (kT))^2 = o(T \log x),$
   as $x \to \infty$.
 Equations \eqref{intlhsrhs} and \eqref{eqn42neww} imply that
\begin{equation}\nonumber
   F^+_{\chi_{_\square}}(x,T)  \sim  \frac{T }{2\pi} \log x,
\end{equation}
as $x \to \infty,$
uniformly for $ x \leq T \leq e^x$ and $3 \leq k \leq \sqrt{x}/(\log x)^{3}.$
\end{proof}

\section{Proof of Theorem \ref{least-under-pc}}
We start by introduce some notation and proving some useful lemmas.
Define \begin{align}
\label{Sigma-plus-def}
\Sigma^+_{\chi_{_\square}}(x,T,v)
\coloneq
\sum_{0 \le  \gamma \le T} x^{i\gamma}e^{iv\gamma}.
\end{align}
An easy computation gives
$W(u) = \int_{-\infty}^{\infty} e^{ivu}e^{-2 \vert v \vert }\, \dx v$ and
\begin{equation*}
F^+_{\chi_{_\square}}(x,T) = 
\int_{-\infty}^{\infty} \vert \Sigma^+_{\chi_{_\square}}(x,T,v) \vert ^2\,e^{-2 \vert v \vert }\, \dx v,
\end{equation*}
implying that $F^+_{\chi_{_\square}}(x,T)\ge 0$.

\begin{lem}\label{lemlpq}
For $x\ge 2$ and $T\ge 0$, we have 
\begin{equation*}
\int_{x}^{2x} \vert \Sigma^+_{\chi_{_\square}}(t,T,0)  \vert^2 \, \dx t 
\ll 
 x F^+_{\chi_{_\square}}(x, T).  
\end{equation*}
\end{lem}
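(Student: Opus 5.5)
This is the standard Gallagher/Heath-Brown device: change variables $t = xe^{v}$ and then compare against the weight $e^{-2|v|}$. Write $\Sigma^+_{\chi_{_\square}}(t,T,0)=\sum_{0\le\gamma\le T} t^{i\gamma}$. For $t\in[x,2x]$ put $t=xe^{v}$ with $v\in[0,\log 2]$. Then $t^{i\gamma}=x^{i\gamma}e^{iv\gamma}$, so
\[
\Sigma^+_{\chi_{_\square}}(t,T,0)=\Sigma^+_{\chi_{_\square}}(x,T,v)
\]
by the definition \eqref{Sigma-plus-def}. Since $\dx t = xe^{v}\,\dx v$, we get
\[
\int_x^{2x}\bigl\vert\Sigma^+_{\chi_{_\square}}(t,T,0)\bigr\vert^2\,\dx t
= x\int_0^{\log 2}\bigl\vert\Sigma^+_{\chi_{_\square}}(x,T,v)\bigr\vert^2 e^{v}\,\dx v .
\]

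On $[0,\log 2]$ we have $e^{v}\le 2$ and $e^{-2|v|}\ge 1/4$. Hence $e^{v}\le 8e^{-2|v|}$ there, and the right-hand side is at most
\[
8x\int_0^{\log 2}\bigl\vert\Sigma^+_{\chi_{_\square}}(x,T,v)\bigr\vert^2e^{-2|v|}\,\dx v .
\]
Because the integrand is nonnegative, we may extend the range of integration to all of $\mathbb{R}$. This gives at most
\[
8x\int_{-\infty}^{\infty}\bigl\vert\Sigma^+_{\chi_{_\square}}(x,T,v)\bigr\vert^2e^{-2|v|}\,\dx v = 8x\,F^+_{\chi_{_\square}}(x,T),
\]
using the integral representation of $F^+_{\chi_{_\square}}$ stated just before the lemma.

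The only point that needs checking is that integral representation, which the paper calls an easy computation. Expand $|\Sigma^+|^2$ as a double sum over $\gamma_1,\gamma_2$, interchange sum and integral (legitimate since the sum is finite for fixed $T$), and use
\[
\int_{-\infty}^{\infty} e^{iv(\gamma_1-\gamma_2)}e^{-2|v|}\,\dx v = W(\gamma_1-\gamma_2).
\]
This holds because $\int_{-\infty}^{\infty} e^{ivu}e^{-2|v|}\,\dx v = 2\cdot\frac{2}{4+u^2} = \frac{4}{4+u^2}$. I expect no real obstacle: the case $T<\gamma_{\min}$ (empty sum) is trivial, and the bound holds with implied constant $8$.
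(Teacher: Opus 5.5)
Your proposal is correct and follows the paper's own proof: substitute $t=xe^{v}$, bound $e^{v}\ll e^{-2|v|}$ on $[0,\log 2]$, extend to $\mathbb{R}$ by nonnegativity, and use the integral representation of $F^+_{\chi_{_\square}}(x,T)$. You also check that representation via $\int_{-\infty}^{\infty}e^{ivu}e^{-2|v|}\,\dx v=4/(4+u^2)$ and make the constant $8$ explicit, which the paper leaves implicit.
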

\begin{proof} Note that
\begin{align*}
\int_{x}^{2x} \vert\Sigma^+_{\chi_{_\square}}(t,T,0) \vert^2 \, \dx t 
&= x \int_{0}^{\log 2} \vert\Sigma^+_{\chi_{_\square}}(x,T,v) \vert^2 e^{v} \, \dx v \nonumber\\
&\ll 
x \int_{-\infty}^{\infty}  \vert\Sigma^+_{\chi_{_\square}}(x,T,v) \vert^2 e^{-2 \vert v \vert } \, \dx v
= 
x F^+_{\chi_{_\square}}(x, T).
\qed
\end{align*}
\renewcommand{\qed}{}
\end{proof}

\begin{lem}\label{lem3}
For $x\ge 2$ and $T > U \ge 0$, we have 
\[
\vert
\Sigma^+_{\chi_{_\square}}(x,T,0) - \Sigma^+_{\chi_{_\square}}(x,U,0)
\vert
\ll 
\sqrt{\, T \max_{U \le  t \le T} F^+_{\chi_{_\square}}(x,t)}.
\]
\end{lem}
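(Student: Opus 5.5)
This follows Heath-Brown's argument \cite{HB1982}: view the difference as a function of the zero-counting parameter and apply the Sobolev--Gallagher inequality (Lemma \ref{Sob-Gal-ineq}) in the \emph{multiplicative} variable $t\in[x,2x]$. For $y\in[x,2x]$ set
\[
f(y) \coloneq \Sigma^+_{\chi_{_\square}}(y,T,0) - \Sigma^+_{\chi_{_\square}}(y,U,0) = \sum_{U<\gamma\le T} y^{i\gamma},
\]
where boundary terms at $\gamma=U$ are absorbed by the convention on the range. Its derivative is $f'(y)=\sum_{U<\gamma\le T} i\gamma\, y^{i\gamma-1}$. Lemma \ref{Sob-Gal-ineq} on $[x,2x]$ evaluated at $u=x$ gives
\[
|f(x)| \le \frac1x\int_x^{2x}|f(y)|\,\dx y + \int_x^{2x}|f'(y)|\,\dx y .
\]

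For the first term, Cauchy--Schwarz \eqref{Cauchy} bounds it by $\bigl(x^{-1}\int_x^{2x}|f|^2\,\dx y\bigr)^{1/2}$. Splitting $f$ as a difference of two $\Sigma^+$ sums, Lemma \ref{lemlpq} gives $\ll (F^+(x,T)+F^+(x,U))^{1/2}$, and this is $\ll\sqrt{T\max F^+}$ trivially. The nonnegativity $F^+\ge0$ proved above is used here.

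The derivative term is the real issue, since the factor $\gamma$ can be as large as $T$. I would remove it by partial summation in $\gamma$:
\[
\sum_{U<\gamma\le T}\gamma\, y^{i\gamma} = T\,g_T(y) - U\,g_U(y) - \int_U^T g_s(y)\,\dx s, \qquad g_s(y)\coloneq\sum_{U<\gamma\le s} y^{i\gamma} = \Sigma^+_{\chi_{_\square}}(y,s,0)-\Sigma^+_{\chi_{_\square}}(y,U,0).
\]
Then $\int_x^{2x}|f'|\,\dx y \le x^{-1}\int_x^{2x}\bigl|\sum\gamma y^{i\gamma}\bigr|\,\dx y$. Each piece is handled by Cauchy--Schwarz and Lemma \ref{lemlpq}:
\[
\frac1x\int_x^{2x}|g_s(y)|\,\dx y \ll \bigl(F^+(x,s)+F^+(x,U)\bigr)^{1/2} \ll \Bigl(\max_{U\le t\le T}F^+(x,t)\Bigr)^{1/2}.
\]
The boundary terms therefore contribute $\ll T(\max F^+)^{1/2}$, and the integral over $s$, after swapping the order of integration by Fubini, contributes $\ll (T-U)(\max F^+)^{1/2}$.

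This gives $|f(x)|\ll T\,(\max F^+)^{1/2}$, which is weaker by $\sqrt T$ than what is claimed. The main obstacle is recovering that factor. I would do it by shrinking the Sobolev--Gallagher interval to $[x, x e^{1/T}]$, of length $h\asymp x/T$, which is the natural scale for frequencies $\gamma\le T$. Then
\begin{itemize}
\item the first term becomes $h^{-1}\int|f|\le (h^{-1}\int|f|^2)^{1/2}$;
\item the derivative term becomes $\le (h\int|f'|^2)^{1/2}$ with $|f'|\ll T\cdot(\text{size of } g)/x$.
\end{itemize}
Both are then controlled by $h^{-1}\int_{x}^{x+h}|g|^2$-type mean values.

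That needs a short-interval version of Lemma \ref{lemlpq}: $\int_x^{xe^{1/T}}|\Sigma^+(y,s,0)|^2\,\dx y \ll (x/T)F^+(x,s)$. I would prove it exactly as Lemma \ref{lemlpq}, substituting $y=xe^{v}$, $v\in[0,1/T]$, and bounding $\mathbf 1_{[0,1/T]}(v)$ by $e^{-2|v|}$. This only yields $x F^+$, with no $1/T$ gain. So instead I would weight with the Fejér-type kernel whose Fourier transform is supported in $|u|\ll T$, which reproduces $F^+$ up to the harmless weight $W$ because $|\gamma_1-\gamma_2|\le T$.

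Combining the two terms would give $|f(x)|\ll (T\max F^+)^{1/2}$. If this kernel replacement proves delicate, the fallback is to keep the interval $[x,2x]$ and run the Sobolev argument on $v\mapsto\Sigma^+(x,T,v)$ with the weight $e^{-2|v|}$ directly.
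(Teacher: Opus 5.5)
Your first pass is correct: Lemma~\ref{Sob-Gal-ineq} applied to $f$ itself on $[x,2x]$, with your partial summation and Lemma~\ref{lemlpq}, gives $|f(x)|\ll T M^{1/2}$, where $M=\max_{U\le t\le T}F^+_{\chi_{_\square}}(x,t)$. You also correctly locate the loss of $\sqrt T$. The proposed repair fails, however.

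The short-interval bound $\int_x^{xe^{1/T}}|\Sigma^+_{\chi_{_\square}}(y,s,0)|^2\,\dx y\ll (x/T)F^+_{\chi_{_\square}}(x,s)$ is false for exponential sums of this shape, and your argument uses nothing about the $\gamma$'s beyond their range. Take the model $P(v)=\sum_{n\le N}e^{in\Delta v}$ with $\Delta=T/N\le 1$ and $T\ge 1$. It plays the role of $\Sigma^+_{\chi_{_\square}}(xe^{v},s,0)$ with all phases $x^{i\gamma}$ aligned. Then $|P(v)|\ge N/2$ for $0\le v\le 1/(10T)$, so the left side is $\gg xN^2/T$. On the other hand, $\int|P|^2e^{-2|v|}\,\dx v=\sum_{m,n\le N}W((m-n)\Delta)\asymp N^2/T$, so the right side is $\asymp xN^2/T^2$.

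A sanity check would have caught this. Granting that bound, your two Sobolev terms give $|f(x)|\ll M^{1/2}$. That is stronger than the lemma and false in the same example, where $|P(0)|^2=N^2\asymp T\int|P|^2e^{-2|v|}\,\dx v$; in particular the lemma is sharp.

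The Fej\'er kernel cannot rescue the step. Majorizing $\mathbf 1_{[0,1/T]}$ by $\asymp T^{-1}K_T$ produces $T^{-1}$ times a pair sum weighted by $\widehat{K_T}(\gamma_1-\gamma_2)=2\pi(1-|\gamma_1-\gamma_2|/T)_+$, not by $W(\gamma_1-\gamma_2)$, which decays like $(\gamma_1-\gamma_2)^{-2}$. Since the terms $x^{i(\gamma_1-\gamma_2)}$ oscillate, pair sums with different weights cannot be compared, and the constraint $|\gamma_1-\gamma_2|\le T$ does not help. Indeed $\frac{1}{2\pi}\int|P|^2K_T$ is just an average of $|P|^2$ over scale $1/T$ around $0$, i.e.\ essentially the quantity you are trying to bound. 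Your fallback, as stated, runs Sobolev--Gallagher on $\Sigma^+$ itself in $v$. After $y=xe^v$ that is the same computation with the same loss.

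The missing idea is to apply Lemma~\ref{Sob-Gal-ineq} to $|f|^2$ rather than to $f$. Since $\bigl|(|f|^2)'\bigr|\le 2|f||f'|$, Cauchy--Schwarz gives
\[
|f(x)|^2\le\frac{1}{x}\int_x^{2x}|f|^2\,\dx y+2\Bigl(\int_x^{2x}|f|^2\,\dx y\Bigr)^{1/2}\Bigl(\int_x^{2x}|f'|^2\,\dx y\Bigr)^{1/2}.
\]
Your own ingredients then finish the job:
\begin{itemize}
\item Lemma~\ref{lemlpq} gives $\int_x^{2x}|f|^2\,\dx y\ll xM$.
\item Your partial summation, together with $\bigl|\int_U^T g_s(y)\,\dx s\bigr|^2\le T\int_U^T|g_s(y)|^2\,\dx s$ and Fubini, gives $\int_x^{2x}|f'|^2\,\dx y\ll x^{-2}\int_x^{2x}\bigl|\sum_{U<\gamma\le T}\gamma y^{i\gamma}\bigr|^2\,\dx y\ll x^{-1}T^2M$.
\end{itemize}
Hence $|f(x)|^2\ll(1+T)M$. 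The derivative enters only through the geometric mean $\|f\|_2\|f'\|_2$, and that is exactly where the factor $\sqrt T$ is recovered. This is the paper's proof, carried out in the variable $v\in[-1,1]$ with $H(v)=|\Sigma^+_{\chi_{_\square}}(x,T,v)-\Sigma^+_{\chi_{_\square}}(x,U,v)|^2$ and the weight $e^{-2|v|}$.
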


\begin{proof}
Using \eqref{Sigma-plus-def}, we define
\begin{equation}
\label{defgv}
h(x, T, U, v) = 
\Sigma^+_{\chi_{_\square}}(x,T,v) -
\Sigma^+_{\chi_{_\square}}(x,U,v) 
= \sum_{\substack{U <  \gamma \le  T
}} x^{i\gamma}e^{i\gamma v}.
\end{equation}

\noindent
Observe that

\begin{align}
\nonumber
   \int_{-\infty}^{\infty}
    \vert h(x, T, U, v) \vert ^2e^{-2 \vert v \vert }& \, \dx v 
   =
   \sum_{\substack{U <  \gamma_j \le  T\\ j = 1,2}} 
   x^{i(\gamma_1-\gamma_2)}
   \int_{-\infty}^{\infty}
   e^{i(\gamma_1-\gamma_2)v}e^{-2 \vert v \vert } \, \dx v 
  \\
   \label{G-F-relation}
   & =
%   \notag
    \sum_{\substack{U <  \gamma_j \le  T\\
   j = 1,2
   %\\L(\frac{1}{2} + i\gamma, \chi_j) = 0
   }} 
   x^{i(\gamma_1-\gamma_2)}
   W(\gamma_1 - \gamma_2)
 %  \\&
   = F^+_{\chi_{_\square}}(x,T) - F^+_{\chi_{_\square}}(x,U).
\end{align}
Letting $H(v) =  \vert  h(x, T, U, v) \vert ^2$,
we first remark that, using \eqref{G-F-relation}, we have
\begin{equation}
\label{G-estim}
\int_{-1}^1 
\vert  H(v)  \vert 
\, \dx v
\ll
\int_{-\infty}^{\infty}
\vert  H(v)  \vert 
\,e^{-2 \vert v \vert }\, \dx v
\ll
F^+_{\chi_{_\square}}(x,T) + F^+_{\chi_{_\square}}(x,U).
\end{equation}
Combining the Sobolev--Gallagher inequality (Lemma \ref{Sob-Gal-ineq})
with
\eqref{G-estim} yields
\begin{align}
\notag
\vert h(x, T, U, 0) \vert ^2
= 
H(0) 
&
\ll \int_{-1}^1  \vert  H(v)  \vert  \, \dx v +  \int_{-1}^1  \vert  H^{\prime}(v)  \vert  \, \dx v  
\\
&
\label{Sob-Gall-ineq}    
\ll F^+_{\chi_{_\square}}(x,T) +  F^+_{\chi_{_\square}}(x,U) + \int_{-1}^1   \vert  H^{\prime}(v)  \vert  \, \dx v.
\end{align}
By the Cauchy--Schwarz inequality \eqref{Cauchy}, 
we obtain 
\begin{equation}
\!\!
\label{G-prime-first}
\int_{-1}^1   \vert  H^{\prime}(v)  \vert  \, \dx v  \ll  
\Bigl(\int_{-1}^1 
\vert  H(v)  \vert 
\, \dx v
\Bigr)^{1/2} 
\Bigl(\int_{-1}^1 \bigl \vert 
\sum_{U <  \gamma \le  T} \!\!
\gamma x^{i\gamma}e^{iv\gamma}\bigr \vert ^2\, \dx v
\Bigr)^{1/2}.
\end{equation}
Moreover, by partial summation we have
\[
\sum_{U <  \gamma \le  T}\gamma x^{i\gamma}e^{iv\gamma}
=
T h(x, T, U, v)
- 
\int_{U}^T h(x, t, U, v)\, \dx t
\]
and, again using \eqref{G-F-relation}, we obtain
\begin{align}
\notag
\int_{-1}^{1}
\bigl \vert 
\sum_{U <  \gamma \le  T} & \gamma x^{i\gamma}e^{iv\gamma}\bigr \vert ^2 \, \dx v
\ll 
\int_{-\infty}^{\infty}
\bigl \vert 
\sum_{U <  \gamma \le  T}\gamma x^{i\gamma}e^{iv\gamma}\bigr \vert ^2\,e^{-2 \vert v \vert }\, \dx v
\\
\notag
&\ll 
T^2
\int_{-\infty}^{\infty} H(v)   \,e^{-2 \vert v \vert }\, \dx v
+ 
T  \int_{U}^T 
 \int_{-\infty}^{\infty} \vert h(x, t, U, v) \vert ^2  \,e^{-2 \vert v \vert }\, \dx v \,\, \dx t
\\
\notag
& 
\ll
T^2  (F^+_{\chi_{_\square}}(x,T) + F^+_{\chi_{_\square}}(x,U))
+
T  \int_{U}^T  (F^+_{\chi_{_\square}}(x,t) + F^+_{\chi_{_\square}}(x,U)) \,\, \dx t
 \\
\label{G-prime-second}  
&
\ll T^2  \max_{U\le t\le T}F^+_{\chi_{_\square}}(x,t).
\end{align}
Combining  \eqref{G-estim} and \eqref{G-prime-first}-\eqref{G-prime-second}
we obtain the estimate
\begin{equation}
\label{thesis}
\int_{-1}^1  \vert H^{\prime}(v) \vert \, \dx v  \ll T\max_{U\le t\le T}F^+_{\chi_{_\square}}(x,t),
\end{equation}
and the lemma  follows from \eqref{defgv}, \eqref{Sob-Gall-ineq} and \eqref{thesis}.
\end{proof}

\begin{Rem}\label{rem1}
Since $\chi_{_{\square}}$ is quadratic, it is a real character and so 
$L(1/2+i\gamma, \chi_{_{\square}})=0$ if and only if $L(1/2-i\gamma,\chi_{_{\square}})=0$.
This implies the identity
\[
 \sum_{\substack{-T \le  \gamma \le  -U
%\\L(\frac{1}{2} + i\gamma, \chi) = 0
}} \frac{w^{i\gamma}}{1/2 + i\gamma} 
=
\overline{
   \sum_{\substack{U\le  \gamma \le  T }} 
\frac{w^{i\gamma}}{1/2 + i\gamma}},
\] 
valid for  $w\in \mathbb{R}$ and $T  \ge  U \ge0$.
\end{Rem}

\begin{lem}\label{lem21q}
Let 
\[
I(x,T) 
\coloneq 
 \sum_{\substack{ \vert \gamma \vert  \le  T 
}} \frac{x^{1/2+ i\gamma}}{1/2 + i\gamma}.
\]
Let $k > x^{\eps}$ and $J$ be the maximal integer such that $\log k \le   (x/2^J) (\log (x/2^J))^{-1} $.
Assuming RH for the Dirichlet $L$-function associated with the quadratic character modulo $k$ and
Hypothesis \ref{PCquadratic} in the ranges $x^{\eps} < k \le \exp\bigl( x(\log x)^{-1} \bigr)$ and
 $ x^{\eps} \le T \le x$,
we have for $j=0,\dots,J$,
\[
\Bigl \vert  I\bigl(\frac{x}{2^j},T\bigr)- I\bigl(\frac{x}{2^{j+1}},T\bigr)\Bigr \vert
\ll \sqrt{\frac{x}{2^j} \log (kx)} \, x^{\eps} ,
\]
uniformly for $
x^{\eps}
\le T \le  x/2^j$ and  $x^{\eps} < k \le \exp\bigl((x/2^j) (\log (x/2^j))^{-1}\bigr)$.
\end{lem}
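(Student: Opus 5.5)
Write $y=x/2^j$. Using Remark \ref{rem1}, the sum over $-T\le\gamma\le 0$ is the complex conjugate of the sum over $0\le\gamma\le T$ (up to the possible zero at $\gamma=0$, which contributes trivially). Hence
\[
I(y,T)-I(y/2,T)=2\,\Re \sum_{0\le\gamma\le T}\frac{y^{1/2+i\gamma}-(y/2)^{1/2+i\gamma}}{1/2+i\gamma}+O(\sqrt y).
\]
Writing $\frac{y^{1/2+i\gamma}-(y/2)^{1/2+i\gamma}}{1/2+i\gamma}=\int_{y/2}^{y} u^{-1/2}u^{i\gamma}\,\dx u$, we obtain
\[
I(y,T)-I(y/2,T)\ll \Bigl|\int_{y/2}^{y} u^{-1/2}\,\Sigma^+_{\chi_{_\square}}(u,T,0)\,\dx u\Bigr| +\sqrt y .
\]
So the problem reduces to bounding the mean of $\Sigma^+_{\chi_{_\square}}(u,T,0)$ over the dyadic interval $[y/2,y]$.

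By Cauchy--Schwarz \eqref{Cauchy}, the integral is at most
\[
\Bigl(\int_{y/2}^{y}u^{-1}\,\dx u\Bigr)^{1/2}\Bigl(\int_{y/2}^{y}|\Sigma^+_{\chi_{_\square}}(u,T,0)|^2\,\dx u\Bigr)^{1/2}.
\]
By Lemma \ref{lemlpq} (applied with $x$ replaced by $y/2$), this is
\[
\ll \bigl(y\,F^+_{\chi_{_\square}}(y/2,T)\bigr)^{1/2}.
\]
Now invoke Hypothesis \ref{PCquadratic} with $x$ replaced by $y/2$. Its range requirements hold: $k\le\exp\bigl((y/2)(\log(y/2))^{-1}\bigr)$ for $j\le J$ by the definition of $J$ (after adjusting by one step), and $T$ lies in $[(y/2)^\eps,\,y/2)$. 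This gives
\[
F^+_{\chi_{_\square}}(y/2,T)\ll T\log(ky)\le y\log(kx),
\]
and hence the crude bound $\ll y\sqrt{\log(kx)}$. That is too weak, since it carries $T$ rather than $1$.

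\textbf{Main obstacle.} The plain mean-square bound loses the factor $T$, so the $u^{-1/2}\,\dx u$ integration must produce the missing decay. I would first try integrating by parts in $u$ on each term, which gains a factor $1/\gamma$; equivalently, partial summation in $\gamma$, splitting at $\gamma\approx 1$. Concretely:
\begin{itemize}
\item[(i)] Split the sum at $U_0=x^{\eps}$. The zeros with $0\le\gamma\le U_0$ number $\ll U_0\log(kU_0)$, and each term is $\ll\sqrt y/|\rho|$. By \eqref{eqn34}-type counting their total is $\ll \sqrt y\,(\log kx)^2\ll \sqrt{y\log(kx)}\,x^\eps$, using $k\le \exp(x/\log x)$ loosely together with the factor $x^\eps$.
\item[(ii)] For $U_0<\gamma\le T$, apply partial summation with weight $1/(1/2+i\gamma)$. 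This expresses the tail through $\Sigma^+_{\chi_{_\square}}(u,t,0)-\Sigma^+_{\chi_{_\square}}(u,U_0,0)$ divided by $t$, integrated $\dx t/t$.
\item[(iii)] Bound those differences with Lemma \ref{lem3} combined with Hypothesis \ref{PCquadratic}: $\ll\sqrt{t\cdot t\log(ku)}=t\sqrt{\log(ku)}$. Dividing by $t$ and integrating $\dx t/t$ over $[U_0,T]$ leaves a factor $\log T\ll\log x\ll x^\eps$. Multiplying by $u^{1/2}\asymp\sqrt y$ gives $\sqrt{y\log(kx)}\,x^\eps$.
\end{itemize}

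Step (iii) is where uniformity matters. Lemma \ref{lem3} needs the maximum of $F^+_{\chi_{_\square}}(u,t)$ for $t\in[U_0,T]$. Hypothesis \ref{PCquadratic} covers $u^\eps\le t<u$, and Theorem \ref{pc1chi} would cover $t\ge u$ if needed. Since $T\le y$ and $u\asymp y$, both ranges are available. The hypothesis on $k$, namely $x^\eps<k\le\exp\bigl(y(\log y)^{-1}\bigr)$, is exactly what lets both results apply with $x$ replaced by $u\in[y/2,y]$. If pointwise control via Lemma \ref{lem3} proves delicate, I would instead average over $u$ using Lemma \ref{lemlpq} inside the partial summation, since only an integrated bound is needed.
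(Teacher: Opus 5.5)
\textbf{Gap: the trivial bound for the low-lying zeros in step (i).} Your steps (ii)--(iii) for $U_0<\gamma\le T$ (partial summation, Lemma \ref{lem3} with Hypothesis \ref{PCquadratic}, and Theorem \ref{pc1chi} for $t\ge u$) match the paper. Step (i) does not work. Counting the zeros with $0\le\gamma\le U_0$ and bounding each term by $\sqrt y/|\rho|$ gives at best
\[
\sqrt y\sum_{0\le\gamma\le U_0}\frac{1}{|1/2+i\gamma|}\ll \sqrt y\,\log(kx)\log x .
\]
To reach the claimed bound you would need $\sqrt{\log(kx)}\,\log x\ll x^{\eps}$, i.e.\ $\log k\ll x^{2\eps}$ up to logarithms. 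The lemma must hold for $\log k$ as large as $(x/2^j)(\log(x/2^j))^{-1}$. The regime that matters for Theorem \ref{least-under-pc} is $\log q\approx x^{1-2\eps}$, since that is where $x^{1/2+\eps}\sqrt{\log q}\asymp x$. There your bound is too large by a factor of roughly $x^{1/2-2\eps}$. The factor $x^{\eps}$ cannot absorb $\sqrt{\log k}$, and the condition $k\le\exp(x/\log x)$ points the wrong way. In effect step (i) is the GRH-only estimate, which gives nothing better than $n(q)\ll(\log q)^2$ up to logarithmic factors.

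\textbf{Missing idea: use the mean-square bound on the short initial range.} The saving of $\sqrt{\log k}$ over the trivial bound must come from the pair-correlation hypothesis on the low-lying zeros. The tool is the mean-square argument you discarded in your first paragraph, applied with $T$ replaced by $U_0=x^{\eps}$:
\[
\Bigl|\int_{y/2}^{y}u^{-1/2}\,\Sigma^+_{\chi_{_\square}}(u,U_0,0)\,\dx u\Bigr|
\ll \bigl(y\,F^+_{\chi_{_\square}}(y/2,U_0)\bigr)^{1/2}
\ll \sqrt{y\,U_0\log(ky)}\ll \sqrt{y\log(kx)}\,x^{\eps/2}.
\]
The loss of $\sqrt{T}$ you identified is harmless when $T=U_0=x^{\eps}$. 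Meanwhile Hypothesis \ref{PCquadratic} replaces the trivial bound $F^+_{\chi_{_\square}}(y/2,U_0)\ll U_0(\log(kU_0))^2$ by $U_0\log(ky)$, which is exactly the needed saving.

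This is the paper's argument: Lemma \ref{lemlpq} plus the hypothesis for $0\le\gamma\le U_0$, partial summation with Lemma \ref{lem3} for $U_0<\gamma\le T$, and Remark \ref{rem1} for negative ordinates. Your first attempt was the right tool on the wrong range. Keep it, restrict it to $[0,U_0]$, and drop the zero-counting bound.
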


\begin{proof} 
We first focus on the contribution of the non-negative 
imaginary parts in the zero-sum embedded into $I(x,T)$.

Let $U = x^{\eps}$.
Let $y$ be of the form $x/2^{j+1}$ with $j$ chosen from $0,\dotsc,J$.
We first handle the high-lying zeros.
Using partial summation, Hypothesis \ref{PCquadratic} and Lemma \ref{lem3}, for $c=1,2$ we obtain
\begin{align*}
\notag
  \Big \vert  \sum_{\substack{U <  \gamma \le  T}
  %\\L(\frac{1}{2} + i\gamma, \chi) = 0
  } \frac{(cy)^{ i\gamma}}{1/2 + i\gamma} \Big \vert  
 &
   \ll  \frac{\vert \Sigma^+_{\chi_{_\square}}(cy,T,0) - \Sigma^+_{\chi_{_\square}}(cy,U,0) \vert}{T}
 \\& \hskip3cm
\notag
  +
   \int_{U}^{T} 
   \vert
   \Sigma^+_{\chi_{_\square}}(cy,w,0) - \Sigma^+_{\chi_{_\square}}(cy,U,0)
   \vert
  \, \frac{\dx w}{w^2} \\
  \nonumber
  & \ll \frac{1}{\sqrt{T}} \Bigl(
\max_{U \le  t \le T}
F^+_{\chi_{_\square}}(cy,t) \Bigr)^{1/2} +
\int_{U}^{T}  \Bigl(
\max_{U \le  t \le w}
F^+_{\chi_{_\square}}(cy,t) \Bigr)^{1/2}\, \frac{\dx w}{w^{3/2}}
\\
&\ll \sqrt{\log (ky)} \log (T/U) 
\ll \sqrt{\log (kx)} \log x
\end{align*}
as $x \to \infty.$
Combining this with the equivalence stated in Remark \ref{rem1}, we have, for $U < T \le  y$, we have
  \begin{align}\label{treat1q}
\Big \vert   \sum_{\substack{ U <  |\gamma| \le  T}
 %\\L(\frac{1}{2} + i\gamma, \chi) = 0
 } \frac{(2y)^{1/2+ i\gamma}-y^{1/2+i\gamma}}{1/2 + i\gamma} \Big \vert   
 \ll \sqrt{y\log (kx)} \, \log x,
   \end{align}
as $x \to \infty$.

We now handle the low-lying zeros.
Recall that $y=x/2^{j+1}$ for some $0\le j\le J$. 
Using  Hypothesis \ref{PCquadratic} and Lemma \ref{lemlpq}, we have
\begin{align}
\notag
\Big \vert  \sum_{\substack{ 0 \le \gamma  \le  U
 }} 
 \frac{(2y)^{1/2+ i\gamma}-y^{1/2+i\gamma}}{1/2 + i\gamma} \Big \vert   
 & =
\Big \vert \sum_{\substack{ 0 \le \gamma \le  U
 }} 
 \int_{y}^{2y} w^{-1/2+i\gamma}\, \dx w \Big \vert  
\\& \notag
=
 \Big \vert \int_{y}^{2y} w^{-1/2} 
 \Sigma^+_{\chi_{_\square}}(w,U,0)
\, \dx w\Big \vert 
\\ &
\notag
 \ll
 \Big( \int_{y}^{2y} 
 \vert\Sigma^+_{\chi_{_\square}}(w,U,0) \vert^2
\, \dx w\Big)^{1/2}
 \ll 
  \sqrt{y F^+_{\chi_{_\square}}(y, U)} 
  \\
 \label{treat12q}
 &
 \ll 
 \sqrt{y U \log (ky)}
 \ll
 \sqrt{y\log (kx)} \, x^{\eps} 
\end{align}
as $x \to \infty$, where we have used the Cauchy--Schwarz inequality \eqref{Cauchy}
in the previous estimate.

Noting that the  possible zero at $1/2$
gives rise to a contribution $2(\sqrt{2}-1)\sqrt{y}$,
and
combining this with \eqref{treat1q}-\eqref{treat12q},
we obtain 
\begin{equation*}
%\label{final-interval}
\Bigl\vert \sum_{\vert \gamma \vert \le  T}
\frac{(2y)^{1/2+ i\gamma}-y^{1/2+i\gamma}}{1/2 + i\gamma}
\Bigr\vert
\ll  \sqrt{y \log (kx)} \, x^{\eps}  ,
\end{equation*}
and this completes the proof.
\end{proof}

We are now ready to prove Theorem \ref{least-under-pc}.
\begin{proof}[Proof of Theorem \ref{least-under-pc}]
Let $q$ be a prime number.
Assuming RH for the Dirichlet $L$-function associated with the quadratic character modulo $q$, equation \eqref{explform-all-gammas} implies that
\begin{equation}
\label{explform-all-gammasq}
\theta(x,\chi_{_\square}) = - \sum_{\substack{ \vert \gamma \vert  \le  Z 
}} \frac{x^{1/2 + i\gamma}}{1/2 + i\gamma} +
O\Bigl(\sqrt{x} + \frac{x}{Z} \bigl(\log (qxZ)\bigr)^2\Bigr).
\end{equation}
Let $Z= \sqrt{x}(\log (qx))^2.$ Using Lemma \ref{lem21q} and \eqref{explform-all-gammasq}, we have, for $x^{\eps} < q \le \exp\bigl( x(\log x)^{-1} \bigr)$
\begin{align*}
%\label{thetaxchipenal}
\theta(x, \chi_{_\square})
&= \sum_{j=0}^{J} \Bigl(
\theta\bigl(\frac{x}{2^j}, \chi_{_\square} \bigr)
- 
\theta\bigl(\frac{x}{2^{j+1}}, \chi_{_\square}\bigr)
\Bigr) +
\theta\bigl(\frac{x}{2^{J+1}}, \chi_{_\square} \bigr) \nonumber \\ \nonumber
& \ll x^{1/2+\eps} \sqrt{\log (qx)}  + \sqrt{x} + \frac{x (\log (qxZ))^2}{Z} + \frac{x}{2^J} \\
% \nonumber
% & \ll
% x^{1/2} (\log (qx))^{1/2} x^{\eps} 
% + x^{1/2} + \frac{x (\log (qxZ))^2}{Z} 
% + (\log q) (\log x)  \\
& \ll
x^{1/2+\eps} \sqrt{\log (qx)} + (\log q) (\log x).
\end{align*}
For $q \le x^{\eps}$, again assuming RH for the Dirichlet $L$-function associated with the quadratic character modulo $q$, from Lemma \ref{davenlem3}
we obtain
\begin{align} \nonumber
\theta(x,\chi_\square) \ll \sqrt{x} (\log (qx))^2,
\end{align}
Combination of the previous two estimates, leads for $q \le \exp\bigl( x(\log x)^{-1} \bigr)$ to the upper bound
\begin{align}\label{thetaxchi}
\theta(x,\chi_\square) \ll x^{1/2+\eps} \sqrt{\log (qx)} + (\log q) (\log x).
\end{align}

Assume that all primes less than $c_1 x$ are quadratic residues modulo $q$, where $c_1>1$ is a  constant.
This implies that $\chi_{_\square}(p) = 1$
for $p < c_1 x$.
Goal is to deduce an upper bound on $x$.

From Chebyshev's bound we have
\[
\theta(c_1 x, \chi_{_\square}) > \theta(x,\chi_{_\square}) = \theta(x) \ge c_2 x,
\]
for some $c_2>0$, independent from $c_1$.
Combining this estimate with \eqref{thetaxchi} leads for $q \le \exp\bigl( x(\log x)^{-1} \bigr)$ and $x$ sufficiently large to
$$
c_2 x < \theta(c_1 x, \chi_{_\square}) \ll
x^{1/2+\eps} \sqrt{\log (qx)} + (\log q) (\log x)$$
It follows that $ x \ll (\log q)^{1+2\eps}$, implying the required result.
\end{proof}

\begin{Rem}
 Observe that in the previous proof we chose $\sqrt{x} (\log (qx))^2$ as the upper bound for the height of the non-trivial zeros, denoted by $Z$. Therefore, it is enough to assume that  Hypothesis \ref{PCquadratic} holds up to the maximum value $T = \sqrt{x} (\log (qx))^2$. However, in the hypothesis of Lemma \ref{lem21q} we allow $T$ to range up to $x$. We retain this larger range to keep the statement consistent with Lemma \ref{lem21} and \cite[Lemma 9]{mathannalen}.
\end{Rem}

\section{Proofs of Theorems \ref{mor5} and \ref{least-prime-gpc}}

We first record the following lemma that relates the sum over zeros in the lower half-plane to the conjugate sum over zeros in the upper half-plane. This relationship follows from the fact that $L(1/2 + i\gamma, \chi) = 0$ if and only if $L(1/2 - i\gamma, \overline{\chi}) = 0$.

\begin{lem}\label{conj-trick-lemma}\cite[Lemma\,8]{mathannalen}
If $(a, k)= 1$, $w\in \mathbb{R}$ and $T\ge0$, then
\[
\sum_{\chi \pmod{k}}\overline{\chi}(a) \sum_{\substack{-T \le  \gamma \le  0
\\L(1/2 + i\gamma, \chi) = 0
}} \frac{w^{i\gamma}}{1/2 + i\gamma} 
=
\overline{
    \sum_{\chi \pmod{k}}
  \overline{\chi}(a) \!\!\! 
   \sum_{\substack{0\le  \gamma \le  T 
 \\ L(1/2+i\gamma,\chi)= 0
   }} 
\frac{w^{i\gamma}}{1/2 + i\gamma}}.
\] 
\end{lem}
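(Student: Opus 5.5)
The identity follows from the symmetry of the zero sets of $L(s,\chi)$ and $L(s,\overline{\chi})$, together with a relabelling of the character sum. The plan is to start from the left-hand side, change variables $\gamma\mapsto-\gamma$ in the inner sum, and recognise the result as a complex conjugate.

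First, I establish the zero correspondence. For $\sigma+it$ with real $\sigma,t$ we have $\overline{L(\sigma+it,\chi)}=L(\sigma-it,\overline{\chi})$. This holds by conjugating the Dirichlet series in the half-plane of absolute convergence and then extending by analytic continuation, i.e. by the reflection principle applied to $s\mapsto\overline{L(\overline{s},\chi)}$. Hence $L(1/2+i\gamma,\chi)=0$ if and only if $L(1/2-i\gamma,\overline{\chi})=0$, with the same multiplicity. Consequently the multiset $\{\gamma: -T\le\gamma\le0,\ L(1/2+i\gamma,\chi)=0\}$ equals $\{-\gamma':0\le\gamma'\le T,\ L(1/2+i\gamma',\overline{\chi})=0\}$. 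For real $w>0$ and real $\gamma'$ we have $w^{-i\gamma'}=\overline{w^{i\gamma'}}$ and $1/2-i\gamma'=\overline{1/2+i\gamma'}$. Therefore
\[
\sum_{\substack{-T\le\gamma\le0\\ L(1/2+i\gamma,\chi)=0}}\frac{w^{i\gamma}}{1/2+i\gamma}
=\overline{\sum_{\substack{0\le\gamma'\le T\\ L(1/2+i\gamma',\overline{\chi})=0}}\frac{w^{i\gamma'}}{1/2+i\gamma'}}.
\]
The statement writes $w\in\mathbb{R}$, but $w^{i\gamma}$ only makes sense for $w>0$, which is the case in all applications, so I read the hypothesis that way.

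Second, I sum over characters. I multiply the display by $\overline{\chi}(a)$ and sum over $\chi \pmod{k}$. Since $\overline{\chi}(a)=\overline{\chi(a)}$, the left-hand side becomes $\overline{\sum_{\chi}\chi(a)\,S^+(\overline{\chi})}$, where $S^+(\psi)$ denotes the upper-half sum for the character $\psi$. The map $\chi\mapsto\overline{\chi}$ is a bijection of the group of characters modulo $k$. Reindexing by $\psi=\overline{\chi}$, so that $\chi(a)=\overline{\psi}(a)$, turns the expression into $\overline{\sum_{\psi}\overline{\psi}(a)S^+(\psi)}$, which is exactly the right-hand side.

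The argument involves no genuine obstacle. The only points to handle with care are the following.
\begin{enumerate}[(i)]
\item Zeros are counted with multiplicity, and this is preserved under conjugation.
\item A possible zero at $\gamma=0$ lies in both ranges; it is self-consistent, because $\chi$ and $\overline{\chi}$ vanish at $1/2$ simultaneously and the term $w^{0}/(1/2)$ is real.
\item For imprimitive or principal characters the statement concerns zeros with real part $1/2$ only. The conjugation symmetry holds for all zeros, so restricting to the critical line is harmless, and no GRH is needed for this lemma.
\end{enumerate}
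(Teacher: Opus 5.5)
Your proposal is correct and is essentially the paper's argument. The paper justifies the lemma only by the zero correspondence $L(1/2+i\gamma,\chi)=0 \iff L(1/2-i\gamma,\overline{\chi})=0$; you supply the steps it leaves implicit, namely conjugating each term and reindexing the character sum via the bijection $\chi\mapsto\overline{\chi}$. Your reading $w>0$ is also the right one: for negative $w$ the identity $w^{-i\gamma}=\overline{w^{i\gamma}}$ fails, and the lemma is only ever applied with positive $w$.
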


From now on, whenever we write $\gamma$ and $\gamma_j$ in the summation 
without additional specifications, we assume, that $L(1/2 + i\gamma, \chi) = 0$,
respectively
$L(1/2 + i\gamma_j, \chi_ j) = 0$. Moreover, the zeros are counted with multiplicity.
For fixed coprime $a$ and $k$, we
define the following weighted sums over the zeros:
\begin{equation*}
%\label{Sigma-plus-def}\notag
\Sigma^+(x,T,v) 
=
\Sigma^+(x,T,v;k,a) 
\coloneq
\sum_{\chi \pmod{k}}
\overline{\chi} (a)
\sum_{0 \le  \gamma \le T} x^{i\gamma}e^{iv\gamma},
\end{equation*}
and
\begin{equation*}
%\label{Sigma-def}\notag
\Sigma(x,T,v) 
=
\Sigma(x,T,v;k,a) 
\coloneq
\sum_{\chi \pmod{k}}
\overline{\chi} (a)
\sum_{0 \le \vert \gamma \vert \le T} x^{i\gamma}e^{iv\gamma}.
\end{equation*}

We now state two lemmas that connect $\Sigma$ and $\Sigma^+$
with $F_{k}(x,T)$ and $F_k^+(x,T)$.

\begin{lem}\label{lem3-mathannalen}\cite[Lemma\,6]{mathannalen}
For $x\ge 2$ and $T > U \ge 0$, we have 
\[
\vert
\Sigma^+(x,T,0) - \Sigma^+(x,U,0)
\vert
\ll 
\sqrt{\, T \max_{U \le  t \le T} F^{+}_{k}(x,T)}.
\]
\end{lem}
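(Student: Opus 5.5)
The plan is to repeat the argument of Lemma \ref{lem3} with the single quadratic character replaced by the twisted sum over all characters modulo $k$. Here $F^+_{\chi_{_\square}}$ is replaced by $F^+_k$, and the maximum on the right should be read as $\max_{U\le t\le T}F^+_k(x,t)$.

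\textbf{Basic identity.} Put
\[
h(x,T,U,v)\coloneq \Sigma^+(x,T,v)-\Sigma^+(x,U,v)=\sum_{\chi \pmod k}\overline{\chi}(a)\sum_{U<\gamma\le T}x^{i\gamma}e^{iv\gamma}.
\]
Expand $|h|^2$ as a double sum over pairs $(\chi_1,\gamma_1)$, $(\chi_2,\gamma_2)$, with coefficient $\overline{\chi_1}(a)\chi_2(a)$. Then use $W(u)=\int e^{ivu}e^{-2|v|}\,\dx v$. This gives
\[
\int_{-\infty}^{\infty}|h(x,T,U,v)|^2e^{-2|v|}\,\dx v=\sum_{\chi_1,\chi_2}\overline{\chi_1}(a)\chi_2(a)\sum_{U<\gamma_j\le T}x^{i(\gamma_1-\gamma_2)}W(\gamma_1-\gamma_2).
\]
Split the range $U<\gamma_j\le T$ via inclusion–exclusion. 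The right-hand side becomes $F_k^+(x,T)-F_k^+(x,U)$ minus the two mixed terms. Each mixed term has $\gamma_1\le U<\gamma_2\le T$ (or the reverse), and both are dominated by $F_k^+(x,T)+F_k^+(x,U)$. I would avoid the mixed terms as follows. With $A=\Sigma^+(x,T,\cdot)$ and $B=\Sigma^+(x,U,\cdot)$, use $|A-B|^2\le 2|A|^2+2|B|^2$. Integrating against $e^{-2|v|}$ then gives directly
\[
\int|h|^2e^{-2|v|}\,\dx v\le 2F_k^+(x,T)+2F_k^+(x,U)\ll\max_{U\le t\le T}F^+_k(x,t).
\]
This also uses $F_k^+(x,t)=\int|\Sigma^+(x,t,v)|^2e^{-2|v|}\,\dx v\ge0$, which the same computation shows.

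\textbf{Sobolev--Gallagher step.} Let $H(v)=|h(x,T,U,v)|^2$. Lemma \ref{Sob-Gal-ineq} on $[-1,1]$ gives $H(0)\ll\int_{-1}^1H+\int_{-1}^1|H'|$. The first integral is bounded by the display above. For the second, $H'=2\Re(\bar h h')$ with
\[
h'(v)=i\sum_\chi\overline{\chi}(a)\sum_{U<\gamma\le T}\gamma x^{i\gamma}e^{iv\gamma}.
\]
Cauchy--Schwarz reduces matters to bounding $\int_{-1}^1|h'|^2$. Partial summation in $\gamma$, done uniformly across characters by summing over all zeros of all $\chi$ with heights in $(U,t]$, gives
\[
h'(v)/i=T\,h(x,T,U,v)-\int_U^Th(x,t,U,v)\,\dx t.
\]
Insert the weight $e^{-2|v|}$ and apply Cauchy--Schwarz in $t$. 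Using the first step for each $t$, this yields $\int|h'|^2e^{-2|v|}\ll T^2\max_{U\le t\le T}F_k^+(x,t)$. Hence $\int_{-1}^1|H'|\ll T\max F_k^+$, and $|h(x,T,U,0)|^2\ll T\max_{U\le t\le T}F_k^+(x,t)$ because $T\ge1$ may be assumed. If $T<1$, the trivial bound $1+T$ on the factor is harmless.

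\textbf{Main obstacle.} The only delicate point is the first step: controlling $\int|h|^2e^{-2|v|}$ by $F_k^+$ values when the character twists $\overline{\chi_1}(a)\chi_2(a)$ are present. The $|A-B|^2\le2|A|^2+2|B|^2$ trick sidesteps any need to show that the cross terms are nonnegative, and everything else is a verbatim copy of the proof of Lemma \ref{lem3}.
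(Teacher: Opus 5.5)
Your proposal is correct and follows the paper's route. It is the argument the paper gives for Lemma~\ref{lem3}, the single-character analogue of this cited lemma, carried over to the twisted sums: the $W$-integral representation, Sobolev--Gallagher on $[-1,1]$, Cauchy--Schwarz, and the partial summation $h'(v)/i=T\,h(x,T,U,v)-\int_U^T h(x,t,U,v)\,\dx t$. You are also right to read the maximum as $\max_{U\le t\le T}F_k^+(x,t)$.

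Your use of $|A-B|^2\le 2|A|^2+2|B|^2$ is more careful than the paper's identity \eqref{G-F-relation}. That identity's last equality drops the mixed terms with $\gamma_1\le U<\gamma_2$, though only the resulting upper bound is ever used. As in the paper, the final absorption needs $T\ge 1$. For $T<1$ your argument gives only $(1+T)\max F_k^+$, which is not the stated bound, but the lemma is only applied with $T\ge U$ large.
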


\begin{lem}\label{lemlp-mathannalen}\cite[Lemma\,7]{mathannalen}
For $x\ge 2$ and $T\ge 0$, we have 
\begin{equation*}
\int_{x}^{2x} \vert\Sigma(t,T,0) \vert^2 \, \dx t 
\ll 
 x F_k(x, T)
\quad 
\textrm{and}
\quad
\int_{x}^{2x} \vert \Sigma^+(t,T,0)  \vert^2 \, \dx t 
\ll 
 x F^+_k(x, T).  
\end{equation*}
\end{lem}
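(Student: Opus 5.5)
The plan is to follow the proof of Lemma \ref{lemlpq} verbatim, with the single quadratic zero-sum replaced by the character-weighted sums $\Sigma$ and $\Sigma^+$. Everything rests on the integral representation of the pair-correlation functions, so I establish that first:
\[
F_k(x,T)=\int_{-\infty}^{\infty}\vert\Sigma(x,T,v)\vert^2 e^{-2\vert v\vert}\,\dx v,
\qquad
F^+_k(x,T)=\int_{-\infty}^{\infty}\vert\Sigma^+(x,T,v)\vert^2 e^{-2\vert v\vert}\,\dx v .
\]

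To prove this, I expand the square. Using that $\vert z\vert^2=\overline{z}z$, I get
\[
\vert\Sigma(x,T,v)\vert^2=\sum_{\chi_1,\chi_2 \pmod{k}}\overline{\chi_1}(a)\chi_2(a)\sum_{\substack{\vert\gamma_j\vert\le T,\ j=1,2\\ L(1/2+i\gamma_j,\chi_j)=0}} x^{i(\gamma_1-\gamma_2)}e^{iv(\gamma_1-\gamma_2)} .
\]
To get exactly $\overline{\chi_1}(a)\chi_2(a)$, I label the conjugated factor as $\chi_2$. Each sum is finite, so I may integrate term by term against $e^{-2\vert v\vert}$. The identity $W(u)=\int_{-\infty}^{\infty}e^{ivu}e^{-2\vert v\vert}\,\dx v$ then turns each term into $x^{i(\gamma_1-\gamma_2)}W(\gamma_1-\gamma_2)$. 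This reproduces exactly the definition of $F_k(x,T)$ through $G_{\chi_1,\chi_2}$. The same computation, restricted to $0\le\gamma_j\le T$, gives the formula for $F^+_k$. In particular both quantities are nonnegative.

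Next I change variables $t=xe^{v}$ on $[x,2x]$. Since $t^{i\gamma}=x^{i\gamma}e^{iv\gamma}$, we have $\Sigma(t,T,0)=\Sigma(x,T,v)$, and $\dx t=xe^{v}\,\dx v$. Hence
\[
\int_x^{2x}\vert\Sigma(t,T,0)\vert^2\,\dx t = x\int_0^{\log 2}\vert\Sigma(x,T,v)\vert^2 e^{v}\,\dx v .
\]
For $0\le v\le\log 2$ we have $e^{v}\le 2\le 8e^{-2\vert v\vert}$. I bound the integrand accordingly and then extend the range of integration to all of $\mathbb{R}$, which is allowed because the integrand is nonnegative. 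By the identity above, this gives $\ll xF_k(x,T)$. The argument for $\Sigma^+$ is identical.

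There is no real obstacle here. The only step needing care is the bookkeeping of the characters when expanding $\vert\Sigma\vert^2$: the conjugate must land on $\chi_1(a)$, and each zero $\gamma_j$ must be attached to $L(s,\chi_j)$, so that the result matches the definition of $G_{\chi_1,\chi_2}$ exactly.
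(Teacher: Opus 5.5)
Your proposal is correct and follows the same route as the paper: the argument given there for Lemma \ref{lemlpq} (which this cited lemma mirrors) likewise substitutes $t=xe^{v}$, bounds $e^{v}\ll e^{-2\vert v\vert}$ on $[0,\log 2]$, extends the integral to $\mathbb{R}$, and uses $F=\int_{-\infty}^{\infty}\vert\Sigma\vert^2e^{-2\vert v\vert}\,\dx v$. You also verify the character bookkeeping behind that integral representation, which the paper leaves as an easy computation.
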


We now prove the following lemma that will play a crucial role in the proof 
of Theorem \ref{mor5}.

\begin{lem}\label{lem21}
Let $c_1>0$ and $c_2\in(1/2,1)$ be fixed.
Let further $1\le a \le k-1, (a, k)= 1$ and put
\[
I(x,k,a,T) 
\coloneq 
\frac{1}{\varphi(k)} 
\sum_{\substack{\chi \pmod{k}}} \overline{\chi}(a) \sum_{\substack{ \vert \gamma \vert  \le  T 
%\\L(\frac{1}{2} + i\gamma, \chi) = 0
}} \frac{x^{1/2+ i\gamma}}{1/2 + i\gamma}.
\]
Let $k > \exp\bigl(c_1(\log x)^{c_2}\bigr)$
and $J$ be the maximal integer such that
$k\le (x/2^J)$ $\exp\bigl(- 2c_1 (\log (x/2^J))^{c_2}\bigr)$.
Assuming GRH and Hypothesis \ref{conjgpc2} in the ranges 
$$ \exp\bigl(c_1(\log x)^{c_2}\bigr)
< k \le x \exp\bigl(- 2c_1 (\log x)^{c_2}\bigr)\quad\text{~and~}\quad \exp\bigl(c_1(\log x)^{c_2}\bigr)
\le  T \le x,$$
we have for $j=0,\dots,J$, that
\[
\Bigl \vert  I\bigl(\frac{x}{2^j},k,a,T\bigr)- I\bigl(\frac{x}{2^{j+1}},k,a,T\bigr)\Bigr \vert 
\ll \sqrt{\frac{x}{2^j{\varphi(k)}}}\, \exp\bigl(c_1(\log x)^{c_2}\bigr)
\]
uniformly for
$ \exp\bigl(c_1(\log x)^{c_2}\bigr)
< k\le (x/2^j)\exp\bigl(-2c_1(\log (x/2^j))^{c_2}\bigr)$
and $ \exp\bigl(c_1(\log x)^{c_2}\bigr)
\le T \le  x/2^j$.
\end{lem}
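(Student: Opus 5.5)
The plan is to mimic the proof of Lemma \ref{lem21q}, now working with the character-averaged sums $\Sigma$, $\Sigma^+$ and with Lemmas \ref{lem3-mathannalen}, \ref{lemlp-mathannalen} and \ref{conj-trick-lemma} in place of Lemmas \ref{lem3}, \ref{lemlpq} and Remark \ref{rem1}. Set $U=\exp\bigl(c_1(\log x)^{c_2}\bigr)$ and $y=x/2^{j+1}$ with $0\le j\le J$. The quantity to bound is
\[
\frac{1}{\varphi(k)}\sum_{\chi}\overline{\chi}(a)\sum_{|\gamma|\le T}\frac{(2y)^{1/2+i\gamma}-y^{1/2+i\gamma}}{1/2+i\gamma}.
\]
By Lemma \ref{conj-trick-lemma}, the part with $-T\le\gamma\le 0$ is the complex conjugate of the part with $0\le\gamma\le T$. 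It therefore suffices to treat $0\le\gamma\le T$, at the cost of handling zeros at $\gamma=0$ separately; these give at most $O(\sqrt y\,\cdot\,\#\{\text{zeros at }1/2\})/\varphi(k)$. Their number should be absorbed using $\#\{\chi\}\le\varphi(k)$ and a $\log k$ bound for the multiplicity per character, which is harmless.

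For the high-lying zeros $U<\gamma\le T$, I will apply partial summation for $c=1,2$:
\[
\sum_{\chi}\overline\chi(a)\sum_{U<\gamma\le T}\frac{(cy)^{i\gamma}}{1/2+i\gamma}
\ll \frac{|\Sigma^+(cy,T,0)-\Sigma^+(cy,U,0)|}{T}+\int_U^T|\Sigma^+(cy,w,0)-\Sigma^+(cy,U,0)|\frac{\dx w}{w^2}.
\]
Lemma \ref{lem3-mathannalen} together with Hypothesis \ref{conjgpc2} applied at $x$ replaced by $cy$ gives $|\Sigma^+(cy,w,0)-\Sigma^+(cy,U,0)|\ll w\sqrt{\varphi(k)\exp(c_1(\log cy)^{c_2})}$. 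Integrating then yields a bound of order
\[
\sqrt{\varphi(k)}\,\exp\bigl(\tfrac{c_1}{2}(\log x)^{c_2}\bigr)\log(T/U)\ll\sqrt{\varphi(k)}\exp\bigl(c_1(\log x)^{c_2}\bigr).
\]
After multiplying by $\sqrt{y}/\varphi(k)$ this contributes $\sqrt{y/\varphi(k)}\exp(c_1(\log x)^{c_2})$, as required.

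For the low-lying zeros $0\le\gamma\le U$, I will write the difference as $\int_y^{2y}w^{-1/2}\Sigma^+(w,U,0)\,\dx w$ and apply Cauchy--Schwarz together with Lemma \ref{lemlp-mathannalen}. This gives $\ll\sqrt{yF_k^+(y,U)}\ll\sqrt{y\varphi(k)U\exp(c_1(\log y)^{c_2})}$. Here lies the main obstacle: with $T=U$ exactly at the bottom of the hypothesis range, this yields $\sqrt{y\varphi(k)}\exp(c_1(\log x)^{c_2})$ only because $U\cdot\exp(c_1(\log y)^{c_2})\le\exp(2c_1(\log x)^{c_2})$. Taking square roots gives exactly the claimed $\exp(c_1(\log x)^{c_2})$, so the bookkeeping of exponents must be done carefully rather than absorbed into an $\eps$. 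I must also verify that the hypothesis is applicable at the modulus and height pair $(k,U)$ for the scale $y$ rather than $x$. This requires $U\ge\exp(c_1(\log y)^{c_2})$, which holds since $y\le x$, and $k\le y\exp(-2c_1(\log y)^{c_2})$, which is guaranteed by the definition of $J$ (with the case $j=J$, where $y=x/2^{J+1}$, needing a small adjustment of constants or restriction of the range). The condition $k>\exp(c_1(\log y)^{c_2})$ follows from $k>U$.

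Finally, I will sum the high and low contributions and their conjugates, and divide by $\varphi(k)$. This gives the stated bound $\sqrt{x/(2^j\varphi(k))}\exp(c_1(\log x)^{c_2})$ uniformly in the indicated ranges.
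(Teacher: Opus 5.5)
Your proposal has a genuine gap at the step you call harmless: the zeros at the central point $s=1/2$. Folding $-T\le\gamma\le 0$ onto $0\le\gamma\le T$ with Lemma~\ref{conj-trick-lemma} counts these zeros twice, so you must subtract
\[
C=\frac{2(\sqrt{2}-1)\sqrt{y}}{\varphi(k)}\sum_{\chi \pmod{k}}\overline{\chi}(a)\,m_\chi ,
\]
where $m_\chi$ is the order of vanishing of $L(s,\chi)$ at $1/2$. If you count these trivially, as you propose, the factor $\varphi(k)$ from the number of characters only cancels the normalisation, and you get $|C|\ll\sqrt{y}\log k$. This is not $\ll\sqrt{y/\varphi(k)}\exp(c_1(\log x)^{c_2})$. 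That bound would require $\sqrt{\varphi(k)}\log k\ll\exp(c_1(\log x)^{c_2})$, which fails for all but the very smallest $k$ in the range (take $k\approx\sqrt{x}$). Even if every $m_\chi\le 1$, $C$ could be of size $\sqrt{y}$. GRH gives neither non-vanishing at $1/2$ nor any cancellation in $\sum_\chi\overline{\chi}(a)m_\chi$. In the single-character template (Lemma~\ref{lem21q}) this term is absorbed because there is no $\varphi(k)^{-1/2}$ saving to protect. Here that saving is the whole point of the lemma, and trivially counted central zeros do not share it.

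The paper never folds the central part. It treats all low-lying zeros $|\gamma|\le U$ together, writing their contribution as $\int_y^{2y}w^{-1/2}\Sigma(w,U,0)\,\dx w$, so that $\gamma=0$ is counted exactly once. It then bounds this by $\sqrt{yF_k(y,U)}$, using the first inequality of Lemma~\ref{lemlp-mathannalen} and the $F_k$ half of Hypothesis~\ref{conjgpc2}. Conjugation is used only for $U<|\gamma|\le T$ (the difference of Lemma~\ref{conj-trick-lemma} at $T$ and at $U$), where no central zeros occur. This is exactly why the hypothesis is stated for $F_k$ as well as $F_k^+$. Your argument only ever invokes $F_k^+$, so it is missing that ingredient.

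If you want to keep your decomposition, note that $\Sigma(y,U,v)=2\Re\Sigma^+(y,U,v)-M$ with $M=\sum_\chi\overline{\chi}(a)m_\chi$ real. Integrating against $e^{-2|v|}$ gives $M^2\ll F_k^+(y,U)+F_k(y,U)$, and then $C$ is acceptable. But this repair also needs the bound for $F_k$.

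The rest of your plan agrees with the paper: the partial summation over $U<\gamma\le T$ with Lemma~\ref{lem3-mathannalen}, the exponent bookkeeping $U\exp(c_1(\log y)^{c_2})\le\exp(2c_1(\log x)^{c_2})$, and the caveat about the range at $j=J$.
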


\begin{proof} 
We first focus on the contribution of the non-negative 
imaginary parts in the zero-sum embedded into $I(x,k,a,T)$.

Let $U = \exp\bigl(c_1(\log x)^{c_2}\bigr)$.
Let $y$ be of the form $x/2^{j+1}$ with $j$ chosen from $0,\dotsc,J$
and $k\le y
\exp\bigl(- 2c_1 (\log y)^{c_2}\bigr)$
We first handle the high-lying zeros.
Using partial summation, Hypothesis \ref{conjgpc2} and Lemma
\ref{lem3-mathannalen}, for $c=1,2$ we obtain
\begin{align*}
\notag
  \Big \vert \sum_{\chi \pmod{k}}
  \overline{\chi}(a) \sum_{\substack{U <  \gamma \le  T} } 
  &\frac{(cy)^{ i\gamma}}{1/2 + i\gamma} \Big \vert  
  \nonumber
   \ll  \frac{\vert \Sigma^+(cy,T,0) - \Sigma^+(cy,U,0) \vert}{T}
   \\& \hskip3cm
\notag
  +
   \int_{U}^{T} 
   \bigl\vert
   \Sigma^+(cy,w,0) - \Sigma^+(cy,U,0)
   \bigr\vert
  \, \frac{\dx w}{w^2} \\
  \nonumber
  & \ll \frac{1}{\sqrt{T}} \Bigl(
\max_{U \le  t \le T}
F^+_k(cy,t) \Bigr)^{1/2} +
\int_{U}^{T} 
\Bigl(
\max_{U \le  t \le w}
F^+_k(cy,t) \Bigr)^{1/2} \frac{\dx w}{w^{3/2}}
\\
%\label{partial-summation}
&\ll  \sqrt{{\varphi(k)}} \, 
\exp\bigl(\frac{c_1}{2} (\log y)^{c_2}\bigr)
\log(T/U)
\\&
\ll  \sqrt{{\varphi(k)}} \,
\exp\bigl(c_1(\log x)^{c_2}\bigr)
\end{align*}
as $x \to \infty,$ uniformly for $k\le y \exp\bigl(- 2c_1 (\log y)^{c_2}\bigr)$.
This implies that for $U < T \le  y$, we have
  \begin{align}\label{treat1}
\Big \vert  \sum_{\chi \pmod{k}}\overline{\chi}(a) \sum_{\substack{ U <  \gamma \le  T}
 %\\L(\frac{1}{2} + i\gamma, \chi) = 0
 } \frac{(2y)^{1/2+ i\gamma}-y^{1/2+i\gamma}}{1/2 + i\gamma} \Big \vert   
\ll \sqrt{{\varphi(k)}y}
\exp\bigl(c_1(\log x)^{c_2}\bigr),
   \end{align}
as $x \to \infty,$ uniformly for  $k\le y \exp\bigl(- 2c_1 (\log y)^{c_2}\bigr)$.
From \eqref{treat1} (see Lemma \ref{conj-trick-lemma}) we also obtain
\begin{equation}
\label{negative-interval}
\Bigl\vert
\sum_{\chi \pmod{k}}\overline{\chi}(a) \sum_{-T \le  \gamma < -U}
\frac{(2y)^{1/2+ i\gamma}-y^{1/2+i\gamma}}{1/2 + i\gamma}
\Bigr\vert
\ll \sqrt{{\varphi(k)}y}
 \exp\bigl(c_1(\log x)^{c_2}\bigr).
\end{equation}

We now handle the low-lying zeros.
Recall that $y=x/2^{j+1}$ for some $0\le j\le J$. 
Using  Hypothesis \ref{conjgpc2} 
and Lemma \ref{lemlp-mathannalen},
we obtain
\begin{align}\label{treat12}
\Big \vert  \sum_{\chi \pmod{k}} & \overline{\chi}(a)
\sum_{\substack{ \vert \gamma \vert \le  U
 }} 
 \frac{(2y)^{1/2+ i\gamma}-y^{1/2+i\gamma}}{1/2 + i\gamma} \Big \vert   
 =
\Big \vert  \sum_{\chi \pmod{k}}\overline{\chi}(a) \sum_{\substack{ \vert \gamma \vert \le  U
 }} 
 \int_{y}^{2y} w^{-1/2+i\gamma}\, \dx w \Big \vert   
 \nonumber\\
& =
 \Big \vert \int_{y}^{2y} w^{-1/2} 
 \Sigma(w,U,0)
\, \dx w\Big \vert 
 \ll
 \Big( \int_{y}^{2y} 
 \vert\Sigma(w,U,0) \vert^2
\, \dx w\Big)^{1/2}
\nonumber\\&
\nonumber
\ll 
  \sqrt{y F_k(y, U)}
 \ll \sqrt{\varphi(k) y U 
 \exp\bigl(c_1(\log x)^{c_2}\bigr)}
 \\&
 \ll \sqrt{{\varphi(k)}y} \exp\bigl(c_1(\log x)^{c_2}\bigr) ,
\end{align}
as $x \to \infty$, where we have used the Cauchy--Schwarz inequality \eqref{Cauchy}
in the previous estimate.

Combining \eqref{treat1}-\eqref{treat12}, we obtain 
\begin{equation*}
%\label{final-interval}
\frac{1}{\varphi(k)} 
\Bigl\vert
\sum_{\chi \pmod{k}}\overline{\chi}(a) \sum_{\vert \gamma \vert \le  T}
\frac{(2y)^{1/2+ i\gamma}-y^{1/2+i\gamma}}{1/2 + i\gamma}
\Bigr\vert
\ll \sqrt{\frac{y}{{\varphi(k)}}} \exp\bigl(c_1(\log x)^{c_2}\bigr),
\end{equation*}
completing the proof.
\end{proof}

\begin{proof}[Proof of Theorem \ref{mor5}]

Under GRH, for $Z\le  x$ we argue as in \cite[eq. (58)]{mathannalen},
thus getting 
\begin{align}\label{eqn1}
   \psi(x;k,a) 
   & = \frac{1}{\varphi(k)} 
   \Bigl( x -
\sum_{\substack{  \chi 
\pmod{k}}} \overline{\chi}(a) \sum_{\substack{ \vert \gamma \vert  \le  Z 
%\ \L(\frac{1}{2} + i\gamma, \chi) = 0
}} \frac{x^{1/2 + i\gamma}}{1/2 + i\gamma} \Bigr) 
+ 
O\Bigl(
\frac{x (\log (kxZ))^2}{Z}
\Bigr),
   \end{align}
as $x \to \infty.$

 Let $k > \exp\bigl(c_1(\log x)^{c_2}\bigr)$
and let $J$ be the maximal integer such that
$(x/2^J)$ $\exp\bigl(-2c_1(\log (x/2^J))^{c_2}\bigr)$ $ \ge  k$.
Using  the Brun--Titchmarsh theorem (Classical Theorem \ref{BT-thm}), 
Lemma \ref{lem21} and equation \eqref{eqn1} with 
$ \exp\bigl(c_1(\log x)^{c_2}\bigr) 
< k\le x \exp\bigl(-2c_1(\log x)^{c_2}\bigr)$, 
$Z = x/2^j
$,
$j=0,\dotsc, J$, we deduce that, for every $1\le a \le k-1$, $(a,k)=1$,
\begin{align}\notag
\psi(x;k,a)- \frac{x}{\varphi(k)}
&= \sum_{j=0}^{J} \Bigl(
\psi\bigl(\frac{x}{2^j};k,a\bigr)
- 
\psi\bigl(\frac{x}{2^{j+1}};k,a\bigr) - \frac{x}{2^{j+1} \varphi(k)}
\Bigr) 
\\&\hskip2cm
\nonumber
+
\psi\bigl(\frac{x}{2^{J+1}};k,a\bigr) - \frac{x}{2^{J+1} \varphi(k)}
\\ \nonumber
& \ll
\sum_{j=0}^{J} 
\sqrt{\frac{x}{2^j {\varphi(k)}}}  \exp\bigl(c_1(\log x)^{c_2}\bigr) + (\log x)^2 +
\frac{x}{2^{J+1} \varphi(k)} \\
\label{dissection-estim}
& \ll \sqrt{\frac{x}{{\varphi(k)}}} 
\exp\bigl(c_1(\log x)^{c_2}\bigr).
\end{align}
Using \eqref{GRH-only}, we obtain
\begin{align}\label{GRH-only-condition-q}
\psi(x;k,a)- \frac{x}{\varphi(k)}
\ll \sqrt{x} (\log x)^2
\ll
\sqrt{\frac{x}{{\varphi(k)}}} \exp\bigl(c_1(\log x)^{c_2}\bigr),
 \end{align}
 for $k \le \exp\bigl(c_1(\log x)^{c_2}\bigr)$ and for every $1\le a \le k-1$, $(a,k)=1$.
The proof is now completed on combining the estimates \eqref{dissection-estim} and \eqref{GRH-only-condition-q}.
\end{proof}

\begin{Rem}
In fact, in the proof of Theorem \ref{mor5} we can choose 
$Z=\sqrt{\varphi(k)x}<\sqrt{kx}\le x \exp\bigl(-c_1(\log x)^{c_2}\bigr)$,
but this would not change very much the uniformity ranges needed
in Hypothesis \ref{conjgpc2}. That is why we decided to use the slightly larger choice $Z=x$
instead.
\end{Rem}

\begin{proof}[Proof of Theorem \ref{least-prime-gpc}]
Using the Brun--Titchmarsh theorem (Classical Theorem \ref{BT-thm}), we obtain
$$
\psi(x;k,a) - \theta(x;k,a) \ll 
 \pi(\sqrt{x}\,; k , a)\ \log x
\ll
\frac{\sqrt{x}\, \log x}{\varphi(k) \log (x/k)}
\ll
\frac{\sqrt{x}\, \log x}{\varphi(k)},
$$
for every $1\le a \le k-1$, $(a,k)=1$, and
uniformly for
$k \le x \exp\bigl(- 2c_1 (\log x)^{c_2}\bigr)$ as $x\rightarrow \infty$.
Theorem \ref{mor5} implies that  we also have, uniformly in 
$ 1 \le k \le x \exp\bigl(- 2c_1 (\log x)^{c_2}\bigr)$,
\begin{align*}
    \theta(x;k,a) &= \frac{x}{\varphi(k)} 
    + O\Bigl( \frac{\sqrt{x} \,\log x}{\varphi(k)} +  \sqrt{\frac{x}{{\varphi(k)}}}\, \exp\bigl(c_1(\log x)^{c_2}\bigr) \Bigr) \\
   & = \frac{x}{\varphi(k)} + O \Bigl(  \sqrt{\frac{x}{\varphi(k)}}\, \exp\bigl(c_1(\log x)^{c_2}\bigr) \Bigr). 
\end{align*}
In order to ensure that $\theta(x;k,a)$ is positive, we write the error term
with an explicit positive 
constant $D$ and solve
\[
\frac{x}{\varphi(k)} - D \sqrt{\frac{x}{\varphi(k)}}\exp\bigl(c_1(\log x)^{c_2}\bigr) > 0
\]
for $x$, which yields $x \gg \varphi(k) \exp\bigl(2c_1(\log k)^{c_2}\bigr)$.
This implies
\[
p(k,a)  \ll \varphi(k) \exp\bigl(2c_1(\log k)^{c_2}\bigr)
\]
for every $1\le a \le k-1$, $(a,k)=1$, completing the proof.
\end{proof}

\medskip
\noindent \textbf{Acknowledgment}. 
We wish to thank 
Professors Andrew Granville and Youness Lamzouri for helpful feedback.
The first and the third author would like to thank the Max-Planck-Institut f\"ur Mathematik in Bonn 
for the hospitality and excellent working conditions.

\medskip
\noindent \textbf{Data availability.} Not applicable.

\medskip
\noindent \textbf{Competing Interests.} The authors have no competing interests to declare that are relevant to the content of this article.

\end{document}